\title{Fenchel Duality Theory and a Primal-Dual Algorithm on Riemannian Manifolds}
\shorttitle{Fenchel Duality on Manifolds}
\author{Ronny Bergmann\thanks{Technische Universität Chemnitz, Faculty of Mathematics, 09107 Chemnitz, Germany (\email{ronny.bergmann@math.tu-chemnitz.de}, \url{https://www.tu-chemnitz.de/mathematik/part_dgl/people/bergmann}, \orcid{0000-0001-8342-7218}, \email{roland.herzog@math.tu-chemnitz.de}, \url{https://www.tu-chemnitz.de/mathematik/part_dgl/people/herzog}, \orcid{0000-0003-2164-6575}, \email{mauricio.silva-louzeiro@math.tu-chemnitz.de}, \url{https://www.tu-chemnitz.de/mathematik/part_dgl/people/louzeiro}, \orcid{0000-0002-4755-3505}).}
\and
Roland Herzog\footnotemark[1]
\and
Maurício Silva Louzeiro\footnotemark[1]
\and
Daniel Tenbrinck\thanks{Friedrich-Alexander Universität Erlangen-Nürnberg, Department of Mathematics, Chair for Applied Mathematics (Modeling and Numerics), 91058 Erlangen, Germany (\email{daniel.tenbrinck@fau.de}, \url{https://en.www.math.fau.de/angewandte-mathematik-1/mitarbeiter/dr-daniel-tenbrinck/}).}
\and
José Vidal-Núñez\thanks{University of Alcalá, Department of Physics and Mathematics, 28801 Alcalá de Henares, Spain (\email{j.vidal@uah.es}, \url{https://www.uah.es/es/estudios/profesor/Jose-Vidal-Nunez/}, \orcid{0000-0002-1190-6700}).}}
\begin{document}
\maketitle

\begin{abstract}
This paper introduces a new notion of a Fenchel conjugate, which generalizes the classical Fenchel conjugation to functions defined on Riemannian manifolds.
We investigate its properties, e.g.,~the Fenchel--Young inequality and the characterization of the convex subdifferential using the analogue of the Fenchel--Moreau Theorem.
These properties of the Fenchel conjugate are employed to derive a Riemannian primal-dual optimization algorithm, and to prove its convergence for the case of Hadamard manifolds under appropriate assumptions.
Numerical results illustrate the performance of the algorithm, which competes with the recently derived Douglas--Rachford algorithm on manifolds of nonpositive curvature.
Furthermore, we show numerically that our novel algorithm even converges on manifolds of positive curvature.\end{abstract}

\begin{keywords}
convex analysis, Fenchel conjugate function, Riemannian manifold, Hadamard manifold, primal-dual algorithm, Chambolle--Pock algorithm, total variation\end{keywords}

\begin{AMS}
\href{https://mathscinet.ams.org/msc/msc2010.html?t=49N15}{49N15}, \href{https://mathscinet.ams.org/msc/msc2010.html?t=49M29}{49M29}, \href{https://mathscinet.ams.org/msc/msc2010.html?t=90C26}{90C26}, \href{https://mathscinet.ams.org/msc/msc2010.html?t=49Q99}{49Q99}
\end{AMS}

\section{Introduction}
\label{sec:Introduction}
Convex analysis plays an important role in optimization, and an elaborate theory on convex analysis and conjugate duality is available on locally convex vector spaces.
Among the vast references on this topic, we mention~\cite{BauschkeCombettes:2011:1} for convex analysis and monotone operator techniques,~\cite{EkelandTemam:1999:1} for convex analysis and the perturbation approach to duality, or~\cite{Rockafellar:1970:1} for an in-depth development of convex analysis on Euclidean spaces.
\cite{Rockafellar:1974:1} focuses on conjugate duality on Euclidean spaces,~\cite{Zalinescu:2002:1,Bot:2010:1} on conjugate duality on locally convex vector spaces, and~\cite{MartinezLegaz:2005:1} on some particular applications of conjugate duality in economics.

We wish to emphasize in particular the role of convex analysis in the analysis and numerical solution of regularized ill-posed problems.
Consider for instance the total variation (TV) functional, which was introduced for imaging applications in the famous Rudin--Osher--Fatemi (ROF) model, see \cite{RudinOsherFatemi:1992:1}, and which is known for its ability to preserve sharp edges.
We refer the reader to~\cite{ChambolleCasellesCremersNovagaPock:2010:1} for further details about total variation for image analysis.
Further applications and regularizers can be found in~\cite{ChambolleLions:1997:1,StrongChan:2003:1,Chambolle:2004:1,ChanEsedogluParkYip:2006:1,WangYangYinZhang:2008:1}.
In addition, higher order differences or differentials can be taken into account, see for example~\cite{ChanMarquinaMulet:2000:1,PapafitsorosSchoenlieb:2014:1} or most prominently the total generalized variation (TGV)~\cite{BrediesKunischPock:2010:1}.
These models use the idea of the pre-dual formulation of the energy functional and Fenchel duality to derive efficient algorithms.
Within the image processing community the resulting algorithms of primal-dual hybrid gradient type are often referred to as the Chambolle--Pock algorithm, see \cite{ChambollePock:2011:1}.

In recent years, optimization on Riemannian manifolds has gained a lot of interest.
Starting in the 1970s, optimization on Riemannian manifolds and corresponding algorithms have been investigated; see for instance \cite{Udriste:1994:1} and the references therein.
In particular, we point out the work by Rapcsák with regard to geodesic convexity in optimization on manifolds; see for instance \cite{Rapcsak:1986:1,Rapcsak:1991:1} and \cite[Ch.~6]{Rapcsak:1997:1}.
The latter reference also serves as a source for optimization problems on manifolds obtained by rephrasing equality constrained problems in vector spaces as unconstrained problems on certain manifolds.
For a com\-pre\-hen\-si\-ve textbook on optimization on matrix manifolds, see~\cite{AbsilMahonySepulchre:2008:1} and the recent \cite{Boumal:2020:1}.

With the emergence of manifold-valued imaging, for example in InSAR imaging~\cite{BuergmannRosenFielding:2000:1}, data consisting of orientations for example in electron back\-scatte\-red diffraction (EBSD)~\cite{AdamsWrightKunze:1993:1,KunzeWrightAdamsDingley:1993:1}, dextrous hand grasping~\cite{DirrHelmkeLageman:2007:1}, or for diffusion tensors in magnetic resonance imaging (DT-MRI), for example discussed in~\cite{PennecFillardAyache:2006:1}, the development of optimization techniques and/or algorithms on manifolds (especially for non-smooth functionals) has gained a lot of attention.
Within these applications, the same tasks appear as for classical, Euclidean imaging, such as denoising, inpainting or segmentation.
Both~\cite{LellmannStrekalovskiyKoetterCremers:2013:1} as well as~\cite{WeinmannDemaretStorath:2014:1} intro\-du\-ced the total variation as a prior in a variational model for manifold-valued images.
While the first extends a lifting approach previously introduced for cyclic data in~\cite{StrekalovskiyCremers:2011:1} to Riemannian manifolds, the latter introduces a cyclic proximal point algorithm (CPPA) to compute a minimizer of the variational model.
Such an algorithm was previously introduced by~\cite{Bacak:2014:1} on $\operatorname{CAT}(0)$ spaces based on the proximal point algorithm introduced by~\cite{FerreiraOliveira:2002:1} on Riemannian manifolds.
Based on these models and algorithms, higher order models have been derived~\cite{BergmannLausSteidlWeinmann:2014:1,BacakBergmannSteidlWeinmann:2016:1,BergmannFitschenPerschSteidl:2018:1,BrediesHollerStorathWeinmann:2018:1}.
Using a relaxation, the half-quadratic minimization \cite{BergmannChanHielscherPerschSteidl:2016:1}, also known as iteratively reweighted least squares (IRLS)~\cite{GrohsSprecher:2016:1}, has been ge\-ne\-ra\-li\-zed to manifold-valued image processing tasks and employs a quasi-Newton method.
Finally, the parallel Douglas--Rachford algorithm (PDRA) was introduced on Hadamard manifolds~\cite{BergmannPerschSteidl:2016:1} and its convergence proof is, to the best of our knowledge, limited to manifolds with constant nonpositive curvature.
Numerically, the PDRA still performs well on arbitrary Hadamard manifolds.
However, for the classical Euclidean case the Douglas--Rachford algorithm is equivalent to applying the alternating directions method of mul\-ti\-pliers (ADMM) \cite{GabayMercier:1976:1} on the dual problem and hence is also equivalent to the algorithm of~\cite{ChambollePock:2011:1}.

In this paper we introduce a new notion of Fenchel duality for Riemannian manifolds, which allows us to derive a conjugate duality theory for convex optimization problems posed on such manifolds.
Our theory allows new al\-go\-rith\-mic approaches to be devised for optimization problems on manifolds.
In the absence of a global concept of convexity on general Riemannian manifolds, our approach is local in nature.
On so-called Hadamard manifolds, however, there is a global notion of convexity and our approach also yields a global method.

The work closest to ours is~\cite{AhmadiKakavandiAmini:2010:1}, who introduce a Fenchel conjugacy-like concept on Hadamard metric spaces, using a quasilinearization map in terms of distances as the duality product.
In contrast, our work makes use of intrinsic tools from differential geometry such as geodesics, tangent and cotangent vectors to establish a conjugation scheme which extends the theory from locally convex vector spaces to Riemannian manifolds.
We investigate the application of the correspondence of a primal problem
\begin{equation}
	\label{eq:primal_problem}
	\text{Minimize} \quad \Fidelity(p) + \Prior(\Lambda(p))
\end{equation}
to a suitably defined dual and derive a primal-dual algorithm on Riemannian manifolds.
In the absence of a concept of linear operators between manifolds we follow the approach of~\cite{Valkonen:2014:1} and state an exact and a linearized variant of our newly established Riemannian Chambolle--Pock algorithm (RCPA).
We then study convergence of the latter on Hadamard manifolds.
Our analysis relies on a careful investigation of the convexity properties of the functions~$F$ and $G$.
We distinguish between geodesic convexity and convexity of a function composed with the exponential map on the tangent space.
Both types of convexity coincide on Euclidean spaces.
This renders the proposed RCPA a direct generalization of the Chambolle-Pock algorithm to Riemannian manifolds.

As an example for a problem of type \eqref{eq:primal_problem}, we detail our algorithm for the anisotropic and isotropic total variation with squared distance data term, \ie, the variants of the ROF model on Riemannian manifolds.
After illustrating the correspondence to the Euclidean (classical) Chambolle--Pock algorithm, we compare the numerical performance of the RCPA to the CPPA and the PDRA.
While the latter has only been shown to converge on Hadamard manifolds of constant curvature, it performs quite well on Hadamard manifolds in general.
On the other hand, the CPPA is known to possibly converge arbitrarily slowly; even in the Euclidean case.
We illustrate that our linearized algorithm competes with the PDRA, and it even performs favorably on manifolds with non-negative curvature, like the sphere.

The remainder of the paper is organized as follows.
In \cref{sec:Preliminaries} we recall a number of classical results from convex analysis in Hilbert spaces.
In an effort to make the paper self-contained, we also briefly state the required concepts from differential geometry.
{\renewcommand*{\sectionautorefname}{Section}\cref{sec:Fenchel_conjugation_scheme}} is devoted to the development of a complete notion of Fenchel conjugation for functions defined on manifolds.
To this end, we extend some classical results from convex analysis and locally convex vector spaces to manifolds, like the Fenchel--Moreau Theorem (also known as the Biconjugation Theorem) and useful characterizations of the subdifferential in terms of the conjugate function.
In \cref{sec:Primal-Dual_on_manifolds} we formulate the primal-dual hybrid gradient method (also referred to as the Riemannian Chambolle--Pock algorithm, RCPA) for general optimization problems on mani\-folds involving non-linear operators.
We present an exact and a linearized formulation of this novel method and prove, under suitable assumptions, convergence for the linearized variant to a minimizer of a linearized problem on arbitrary Hadamard manifolds.
As an application of our theory, \cref{sec:ROF_Models} focuses on the analysis of several total variation models on manifolds.
In \cref{sec:Numerical_experiments} we carry out numerical experiments to illustrate the performance of our novel primal-dual algorithm.
Finally, we give some conclusions and further remarks on future research in \cref{sec:Conclusions}.

\section{Preliminaries on Convex Analysis and Differential Geometry}
\label{sec:Preliminaries}

In this section we review some well known results from convex analysis in Hilbert spaces as well as necessary concepts from differential geometry.
We also revisit the intersection of both topics, convex analysis on Riemannian manifolds, including its subdifferential calculus.

\subsection{Convex Analysis}
\label{subsec:Convex_Analysis}

In this subsection let~$f\colon \cX \to \eR$, where $\eR \coloneqq \R \cup \{\pm\infty\}$ denotes the extended real line and $\cX$ is a Hilbert space with inner product $\inner{\cdot}{\cdot}_\cX$ and duality pairing $\dual{\cdot}{\cdot}_{\cX^*,\cX}$, respectively.
Here, $\cX^*$ denotes the dual space of $\cX$.
When the space $\cX$ and its dual $\cX^*$ are clear from the context, we omit the space and just write $\inner{\cdot}{\cdot}$ and $\dual{\cdot}{\cdot}$, respectively.
For standard definitions like \emph{closedness, properness, lower semicontinuity (lsc)} and \emph{convexity} of $f$ we refer the reader, \eg, to the textbooks~\cite{Rockafellar:1970:1,BauschkeCombettes:2011:1}.

\begin{definition}
	\label{def:Classical_Fenchel_conjugate}%
	The \emph{Fenchel conjugate} of a function~$f\colon \cX \to\eR$ is defined as the function $f^*\colon\cX^* \to \eR$ such that
	\begin{equation}
		\label{eq:Classical_Fenchel_conjugate}
		f^*(x^*)
		\coloneqq
		\sup_{x \in \cX} \paren[auto]\{\}{\dual{x^*}{x} - f(x)}.
	\end{equation}
\end{definition}
We recall some properties of the classical Fenchel conjugate function in the following lemma.

\begin{lemma}[{\cite[Ch.~13]{BauschkeCombettes:2011:1}}]
	\label{lemma:Classical_properties_Fenchel_conjugate}%
	Let~$f,g\colon \cX \to\eR$ be proper functions, $\alpha\in\R$, $\lambda>0$ and $b \in \cX$.
	Then the following statements hold.
	\begin{enumerate}
		\item\label{item:Classical_properties_conjugate}
			$f^*$ is convex and lsc.
		\item\label{item:Classical_properties_inequality}
			If $f(x)\leq g(x)$ for all $x\in \cX $, then $f^*(x^*)\geq g^*(x^*)$ for all $x^*\in \cX^* $.
		\item\label{item:Classical_properties_adding_alpha}
			If $g(x)=f(x)+\alpha$ for all $x\in \cX $, then $g^*(x^*) = f^*(x^*) - \alpha$ for all $x^*\in \cX^* $.
		\item\label{item:Classical_properties_lambda}
			If $g(x) = \lambda f(x)$ for all $x\in \cX $, then $g^*(x^*)=\lambda f^*(x^*/\lambda)$ for all $x^*\in\cX^* $.
		\item\label{item:Classical_properties_Fenchel_shift}
			If $g(x) = f(x+b)$ for all $x\in \cX $, then $g^*(x^*) = f^*(x^*) - \dual{x^*}{b}$ for all $x^*\in \cX^*$.
		\item\label{item:Classical_properties_Fenchel-Young}
			The Fenchel--Young inequality holds, \ie, for all $(x,x^*) \in \cX \times \cX^* $ we have
			\begin{equation}
				\label{eq:Classical_Fenchel_Young_inequality}
				\dual{x^*}{x} \leq f(x) + f^*(x^*).
			\end{equation}
	\end{enumerate}
\end{lemma}

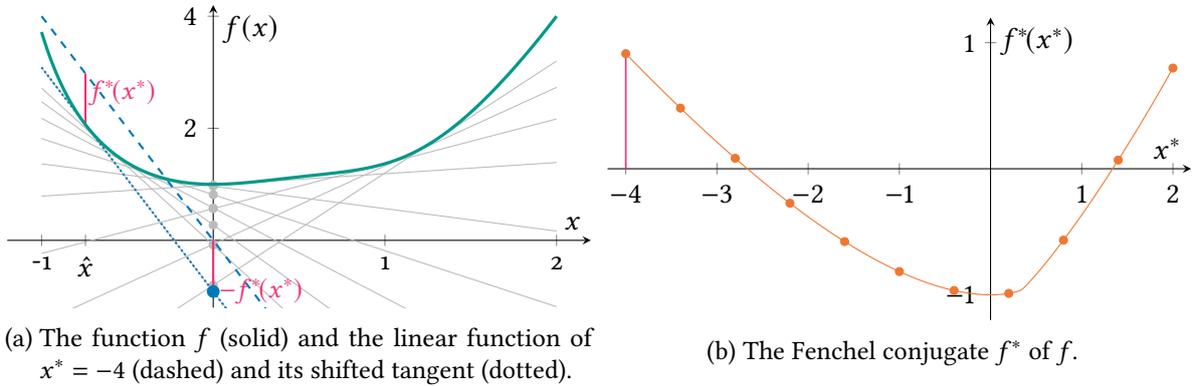
\begin{figure}[htb]\centering
	\pgfplotstableread[col sep = comma]{Data/Rn/FFStarTangents.csv}\FAndTangents%
	\pgfplotstableread[col sep = comma]{Data/Rn/FStarPoints.csv}\FStarPoints%
	\pgfplotstableread[col sep = comma]{Data/Rn/FStarSegments.csv}\FStarSegments%
	\tikzset{fStyle/.style={TolVibrantTeal,very thick}}
	\tikzset{tStyle/.style={TolVibrantGray,mark size=1.5pt, thin}}
	\tikzset{ftStyle/.style={TolVibrantBlue,thick, mark size=2pt}}
	\tikzset{fsStyle/.style={TolVibrantOrange,mark size=1.5pt}}
	\tikzset{sStyle/.style={TolVibrantMagenta,thick}}
	\begin{subfigure}{.49\textwidth}\centering
		\begin{tikzpicture}
			\begin{axis}[
				width=1.2\textwidth,height=.25\textheight,
				xmin=-1.2,xmax=2.2, ymin=-1.2, ymax=4.2,
				clip=true,
				axis y line=center,
				axis x line=center,
				xlabel={$x$},
				ylabel={$f(x)$},
				xtick={-1, -0.7444012767185891, 1, 2},
				xticklabels={-1,$\hat x$,1,2},
				clip mode=individual
				]
				\addplot[tStyle] table[x=x,y=t3]{\FAndTangents};
				\addplot[tStyle,only marks] table[x=z,y=mfs3]{\FStarPoints};
				\addplot[tStyle] table[x=x,y=t4]{\FAndTangents};
				\addplot[tStyle,only marks] table[x=z,y=mfs4]{\FStarPoints};
				\addplot[tStyle] table[x=x,y=t5]{\FAndTangents};
				\addplot[tStyle,only marks] table[x=z,y=mfs5]{\FStarPoints};
				\addplot[tStyle] table[x=x,y=t6]{\FAndTangents};
				\addplot[tStyle,only marks] table[x=z,y=mfs6]{\FStarPoints};
				\addplot[tStyle] table[x=x,y=t7]{\FAndTangents};
				\addplot[tStyle,only marks] table[x=z,y=mfs7]{\FStarPoints};
				\addplot[tStyle] table[x=x,y=t8]{\FAndTangents};
				\addplot[tStyle,only marks] table[x=z,y=mfs8]{\FStarPoints};
				\addplot[tStyle] table[x=x,y=t9]{\FAndTangents};
				\addplot[tStyle,only marks] table[x=z,y=mfs9]{\FStarPoints};
				\addplot[tStyle] table[x=x,y=t10]{\FAndTangents};
				\addplot[tStyle,only marks] table[x=z,y=mfs10]{\FStarPoints};
				\addplot[tStyle] table[x=x,y=t11]{\FAndTangents};
				\addplot[tStyle,only marks] table[x=z,y=mfs11]{\FStarPoints};
				\addplot[ftStyle,dashed] table[x=x,y=t1shift]{\FAndTangents};
				\addplot[sStyle, thick, overlay, postaction = {decorate, decoration={markings, mark=at position .5 with {\node[yshift=2.5, xshift=3ex,font=\small]{${f^*\!(x^*)}$};} }}] table[x=sx,y=sy]{\FStarSegments};
				\addplot[sStyle,thick, overlay, postaction = {decorate, decoration={markings, mark=at position .5 with {\node[yshift=-2ex, xshift=3.85ex,font=\small]{${-f^*\!(x^*)}$};} }}] table[x=tx,y=ty]{\FStarSegments};
				\addplot[tStyle] table[x=x, y=t1]{\FAndTangents};
				\addplot[ftStyle,densely dotted,thick] table[x=x, y=t1]{\FAndTangents};
				\addplot[ftStyle,only marks] table[x=z, y=mfs1]{\FStarPoints};
				\addplot[fStyle] table[x=x, y=f]{\FAndTangents};
			\end{axis}
		\end{tikzpicture}
		\caption{The function $f$ (solid) and the linear function of $x^*=-4$ (dashed) and its shifted tangent (dotted).}
		\label{subfig:realSupportingTangents}
	\end{subfigure}
	\begin{subfigure}{.49\textwidth}\centering
		\begin{tikzpicture}
			\begin{axis}[
				width=1.2\textwidth,height=.25\textheight,
				xmin=-4.2,xmax=2.2, ymin=-1.2, ymax=1.2,
				clip=false,
				axis y line=center,
				axis x line=center,
				xlabel={$x^*$},
				ylabel={$f^*\!(x^*)$}
				]
				\addplot[sStyle,overlay,only marks,ybar,bar width=.25pt,line width=.25pt] table[x=x1,y=fs1]{\FStarPoints};
				\addplot[fsStyle,only marks] table[x=x1,y=fs1]{\FStarPoints};
				\addplot[fsStyle,only marks] table[x=x2,y=fs2]{\FStarPoints};
				\addplot[fsStyle,only marks] table[x=x3,y=fs3]{\FStarPoints};
				\addplot[fsStyle,only marks] table[x=x4,y=fs4]{\FStarPoints};
				\addplot[fsStyle,only marks] table[x=x5,y=fs5]{\FStarPoints};
				\addplot[fsStyle,only marks] table[x=x6,y=fs6]{\FStarPoints};
				\addplot[fsStyle,only marks] table[x=x7,y=fs7]{\FStarPoints};
				\addplot[fsStyle,only marks] table[x=x8,y=fs8]{\FStarPoints};
				\addplot[fsStyle,only marks] table[x=x9,y=fs9]{\FStarPoints};
				\addplot[fsStyle,only marks] table[x=x10,y=fs10]{\FStarPoints};
				\addplot[fsStyle,only marks] table[x=x11,y=fs11]{\FStarPoints};
				\addplot[fsStyle] table[x=xi,y=fstar]{\FAndTangents};
			\end{axis}
		\end{tikzpicture}
		\caption{The Fenchel conjugate $f^*$ of $f$.}
		\label{subfig:realFenchelDual}
	\end{subfigure}
	\caption{Illustration of the Fenchel conjugate for the case $d = 1$ as an interpretation by
	the tangents of slope $x^*$.}
\end{figure}
The Fenchel conjugate of a function $f\colon\R^d\to\eR$ can be interpreted as a maximum seeking problem on the \emph{epigraph} $\epi f \coloneqq \setDef{(x,\alpha) \in \R^d\times \R}{f(x) \leq \alpha}$.
For the case $d=1$ and some fixed~$x^*$ the conjugate maximizes the (signed) distance $\dual{x^*}{x} -f(x)$ of the line of slope~$x^*$ to~$f$.
For instance, let us focus on the case $x^*=-4$ highlighted in \cref{subfig:realSupportingTangents}.
For the linear functional $g_{x^*}(x) = \dual{x^*}{x}$ (dashed), the maximal distance is attained at~$\hat x$.
We can find the same value by considering the shifted functional $h_{x^*}(x) = g_{x^*}(x) - f^*(x^*)$ (dotted line) and its negative value at the origin, \ie, $-h_{x^*}(0) = f^*(x^*)$.
Furthermore $h_{x^*}$ is actually tangent to $f$ at the aforementioned maximizer $\hat x$.
The function $h_{x^*}$ also illustrates the shifting property from~\cref{lemma:Classical_properties_Fenchel_conjugate}~\cref{item:Classical_properties_Fenchel_shift} and its linear offset $ - \dual{x^*}{b}$.
The overall plot of the Fenchel conjugate $f^*$ over an interval of values~$x^*$ is shown in \cref{subfig:realFenchelDual}.

We now recall some results related to the definition of the subdifferential of a proper function.

\begin{definition}[{\cite[Def.~16.1]{BauschkeCombettes:2011:1}}]
	\label{def:Classiscal_convex_subdifferential}%
	Let $f\colon \cX \to\eR$ be a proper function.
	Its subdifferential is defined as
	\begin{equation}
		\partial f(x)
		\coloneqq
		\setDef[auto]{x^* \in \cX^*}{f(z) \geq f(x) + \dual{x^*}{z - x} \text { for all }z \in \cX}.
	\end{equation}
\end{definition}

\begin{theorem}[{\cite[Prop.~16.9]{BauschkeCombettes:2011:1}}]
	\label{thm:Classical_characterization_subdifferential}%
	Let $f\colon \cX \to \eR$ be a proper function and~$x \in \cX$.
	Then $x^*\in\partial f(x)$ holds if and only if
	\begin{equation}
		f(x)+f^*(x^*) = \dual{x^*}{x}.
	\end{equation}
\end{theorem}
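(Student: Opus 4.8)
The plan is to recognize that the claimed identity is precisely the statement that the supremum defining $f^*(x^*)$ in \cref{eq:Classical_Fenchel_conjugate} is attained at the point $x$, and then to read the subgradient inequality off from this directly. The only nontrivial ingredient is the Fenchel--Young inequality from \cref{lemma:Classical_properties_Fenchel_conjugate}~\cref{item:Classical_properties_Fenchel-Young}, which already supplies one of the two inequalities for free in the forward direction.

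First I would treat the forward implication. Assuming $x^* \in \partial f(x)$, the defining inequality from \cref{def:Classiscal_convex_subdifferential} reads $f(z) \geq f(x) + \dual{x^*}{z - x}$ for all $z \in \cX$. Rearranging this as $\dual{x^*}{z} - f(z) \leq \dual{x^*}{x} - f(x)$ and taking the supremum over $z \in \cX$ yields $f^*(x^*) \leq \dual{x^*}{x} - f(x)$, that is, $f(x) + f^*(x^*) \leq \dual{x^*}{x}$. Combined with the reverse inequality supplied by Fenchel--Young, this forces equality.

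Conversely, suppose $f(x) + f^*(x^*) = \dual{x^*}{x}$, equivalently $f^*(x^*) = \dual{x^*}{x} - f(x)$. Since $f^*(x^*)$ is by definition the supremum over all $z$ of $\dual{x^*}{z} - f(z)$, for every $z \in \cX$ we have $\dual{x^*}{z} - f(z) \leq f^*(x^*) = \dual{x^*}{x} - f(x)$. Rearranging recovers exactly $f(z) \geq f(x) + \dual{x^*}{z - x}$ for all $z$, i.e.\ $x^* \in \partial f(x)$.

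The argument is essentially a one-line rearrangement in each direction, so I do not expect a genuine obstacle; the only point requiring care is the handling of infinite values, since $f$ is assumed merely proper. If $f(x) = +\infty$, then $\partial f(x)$ is empty (the subgradient inequality would force $f \equiv +\infty$, contradicting properness), and the claimed identity likewise fails because $f^*(x^*) > -\infty$ for proper $f$; hence both sides of the equivalence are vacuously consistent. I would also note that properness guarantees $f(x) > -\infty$, so the rearrangements of the subgradient inequality never produce the indeterminate form $\infty - \infty$.
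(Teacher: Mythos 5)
Your proof is correct; the paper itself offers no proof of this statement (it is quoted directly from \cite[Prop.~16.9]{BauschkeCombettes:2011:1}), and your argument---rearranging the subgradient inequality into $\dual{x^*}{z}-f(z)\leq\dual{x^*}{x}-f(x)$, taking the supremum over $z$, and closing the gap with Fenchel--Young, together with the reverse reading of the supremum for the converse---is exactly the standard chain of equivalences used in that reference. Your handling of the extended-real edge cases (properness forcing $f(x)$ to be finite whenever either side of the equivalence can hold) is also sound.
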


\begin{corollary}[{\cite[Thm.~16.23]{BauschkeCombettes:2011:1}}]
	\label{cor:Classical_symmetry_subdifferential}%
	Let~$f \colon \cX \to \eR$ be a lsc, proper, and convex function and~$x^* \in \cX^*$.
	Then $x \in \partial f^*(x^*)$ holds if and only if $x^* \in \partial f(x)$.
\end{corollary}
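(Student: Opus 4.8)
The statement to prove is Corollary~\ref{cor:Classical_symmetry_subdifferential}: for a lsc, proper, convex function $f$ and $x^* \in \cX^*$, we have $x \in \partial f^*(x^*)$ if and only if $x^* \in \partial f(x)$. The plan is to reduce this to the characterization of the subdifferential via the conjugate, namely \cref{thm:Classical_characterization_subdifferential}, applied once to $f$ and once to $f^*$. The key observation is that under the stated hypotheses the biconjugate satisfies $f^{**} = f$, and this symmetry is exactly what closes the loop between the two equality conditions.

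\medskip

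\noindent\textbf{Main steps.}
First I would apply \cref{thm:Classical_characterization_subdifferential} to the proper function $f^*$ (which is indeed proper, convex, and lsc by \cref{lemma:Classical_properties_Fenchel_conjugate}~\cref{item:Classical_properties_conjugate}), obtaining that $x \in \partial f^*(x^*)$ holds if and only if
\begin{equation}
	f^*(x^*) + f^{**}(x) = \dual{x}{x^*}.
	\label{eq:plan_first}
\end{equation}
Here $f^{**}$ denotes the conjugate of $f^*$, evaluated at $x \in \cX = \cX^{**}$ under the canonical identification. Second, I would invoke the Fenchel--Moreau (biconjugation) theorem, which gives $f^{**} = f$ precisely because $f$ is proper, convex, and lsc. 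Substituting this into~\eqref{eq:plan_first} rewrites the condition as $f(x) + f^*(x^*) = \dual{x^*}{x}$. Third, I would apply \cref{thm:Classical_characterization_subdifferential} a second time, now to $f$ itself, which states that this last equality is equivalent to $x^* \in \partial f(x)$. Chaining the three equivalences yields the claim.

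\medskip

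\noindent\textbf{The main obstacle.}
The only substantive ingredient is the identity $f^{**} = f$; everything else is a direct application of the already-established \cref{thm:Classical_characterization_subdifferential} together with the symmetry of the Fenchel--Young equality in $x$ and $x^*$. The hard part is therefore justifying biconjugation, which genuinely requires all three hypotheses on $f$: properness rules out the degenerate case where $f \equiv +\infty$ collapses the conjugate, while convexity and lower semicontinuity guarantee that $f$ coincides with the supremum of its affine minorants, i.e.\ with $f^{**}$. Since the corollary is cited from \cite[Thm.~16.23]{BauschkeCombettes:2011:1}, I would treat the Fenchel--Moreau theorem as a known result rather than reprove it; the care needed is to check that $\dual{x}{x^*}$ in~\eqref{eq:plan_first} and $\dual{x^*}{x}$ in the condition for $\partial f(x)$ agree under the reflexivity identification $\cX^{**} \cong \cX$, which is immediate in the Hilbert space setting assumed throughout this subsection.
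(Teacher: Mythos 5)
Your proof is correct, and there is nothing to compare it against: the paper does not prove \cref{cor:Classical_symmetry_subdifferential} at all, but cites it directly from \cite[Thm.~16.23]{BauschkeCombettes:2011:1}. Your chain of equivalences --- \cref{thm:Classical_characterization_subdifferential} applied to the proper function $f^*$, then the Fenchel--Moreau identity $f^{**}=f$ from \cref{thm:Classical_Fenchel_Moreaou_theorem}, then \cref{thm:Classical_characterization_subdifferential} applied to $f$ itself --- is precisely the standard textbook argument, and your remark about identifying $\cX^{**}$ with $\cX$ is handled correctly in the Hilbert space setting of this subsection.

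One citation slip is worth fixing, though it does not invalidate the argument. You justify properness of $f^*$ by \cref{lemma:Classical_properties_Fenchel_conjugate}~\cref{item:Classical_properties_conjugate}, but that item only yields convexity and lower semicontinuity of $f^*$; properness of the conjugate is \emph{not} automatic for a merely proper $f$ (take $f(x) = -\norm{x}^2$ on $\R$, for which $f^* \equiv +\infty$). The properness you need is exactly the final assertion of \cref{thm:Classical_Fenchel_Moreaou_theorem} (``in this case $f^*$ is proper as well''), which requires the full hypotheses that $f$ be proper, convex, and lsc --- and which you invoke anyway in your second step. So the correct bookkeeping is to draw both $f^{**}=f$ and the properness of $f^*$ from Fenchel--Moreau before applying \cref{thm:Classical_characterization_subdifferential} to $f^*$; with that relabeling your proof is complete.
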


The Fenchel biconjugate~$f^{**} \colon \cX \to\eR$ of a function~$f \colon \cX \to \eR$ is given by
\begin{equation}
	\label{eq:Classical_biconjugate_function}
	f^{**}(x)
	=
	(f^*)^*(x)
	=
	\sup_{x^*\in \cX^*} \paren[auto]\{\}{\dual{x^*}{x} -f^*(x^*)}.
\end{equation}

Finally, we conclude this section with the following result known as the Fenchel--Moreau or Biconjugation Theorem.

\begin{theorem}[{\cite[Thm.~13.32]{BauschkeCombettes:2011:1}}]
	\label{thm:Classical_Fenchel_Moreaou_theorem}%
	Given a proper function~$f \colon \cX \to \eR$, the equality~$f^{**}(x) = f(x)$ holds for all $x \in \cX$ if and only if $f$ is lsc and convex.
	In this case $f^*$ is proper as well.
\end{theorem}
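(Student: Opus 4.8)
The plan is to treat the two implications separately, with essentially all of the content residing in the ``if'' direction. For the ``only if'' direction I would simply note that the biconjugate $f^{**} = (f^*)^*$ is itself a Fenchel conjugate, namely of the function $f^*$, so by \cref{lemma:Classical_properties_Fenchel_conjugate}\,\cref{item:Classical_properties_conjugate} it is automatically convex and lsc. Hence if $f = f^{**}$ holds pointwise, then $f$ inherits convexity and lower semicontinuity for free, and nothing further is required.

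For the substantial direction, assume $f$ is proper, lsc, and convex. I would first dispatch the ``easy'' inequality $f^{**} \le f$: rearranging the Fenchel--Young inequality~\eqref{eq:Classical_Fenchel_Young_inequality} gives $\dual{x^*}{x} - f^*(x^*) \le f(x)$ for every $x^* \in \cX^*$, and taking the supremum over $x^*$ yields $f^{**}(x) \le f(x)$ for all $x$; this step is purely formal and holds for any proper $f$. Before proceeding I would record that $f^*$ is proper: since $f$ is proper, lsc, and convex, the Hahn--Banach theorem furnishes at least one continuous affine minorant $x \mapsto \dual{x_1^*}{x} - c \le f(x)$, which by the definition of the conjugate means $f^*(x_1^*) \le c < \infty$, while $f^* > -\infty$ because $f \not\equiv +\infty$. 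This fact is needed below and simultaneously establishes the final clause of the theorem.

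The heart of the argument is the reverse inequality $f^{**} \ge f$, which I would prove by contradiction using the strong separation theorem in the Hilbert space $\cX \times \R$. Suppose $f^{**}(x_0) < f(x_0)$ for some $x_0$, and choose $\beta$ strictly between these two values, so that $(x_0, \beta) \notin \epi f$. Since $f$ is proper, lsc, and convex, the set $\epi f$ is nonempty, closed, and convex, so there exist a nonzero $(\xi, s) \in \cX^* \times \R$ and $\alpha \in \R$ with $\dual{\xi}{x_0} + s\beta < \alpha \le \dual{\xi}{x} + s t$ for all $(x,t) \in \epi f$. Letting the epigraph coordinate $t$ tend to $+\infty$ forces $s \ge 0$. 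In the generic case $s > 0$ I would normalize $s = 1$ and read off, for $x^* \coloneqq -\xi$, that $f^*(x^*) \le -\alpha$; then $f^{**}(x_0) \ge \dual{x^*}{x_0} - f^*(x^*) \ge \dual{x^*}{x_0} + \alpha > \beta$, contradicting $\beta > f^{**}(x_0)$.

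I expect the genuine obstacle to be the degenerate case $s = 0$, in which the separating hyperplane is ``vertical'': it only separates $x_0$ from the set of points where $f$ is finite, forcing $f(x_0) = +\infty$ and producing no affine minorant on its own. Here I would combine the vertical functional with the proper affine minorant $x \mapsto \dual{x_1^*}{x} - f^*(x_1^*)$ secured in the second paragraph. For every $\lambda \ge 0$ the tilted affine function $x \mapsto \dual{x_1^*}{x} - f^*(x_1^*) + \lambda\paren[auto](){\alpha - \dual{\xi}{x}}$ still lies below $f$, since $\alpha - \dual{\xi}{x} \le 0$ wherever $f$ is finite, so that each such function is an affine minorant and hence bounded above by $f^{**}$; yet its value at $x_0$ tends to $+\infty$ as $\lambda \to \infty$ because $\alpha - \dual{\xi}{x_0} > 0$. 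This drives $f^{**}(x_0) = +\infty = f(x_0)$, again contradicting $f^{**}(x_0) < f(x_0)$. Resolving this vertical case is the only place where properness of $f^*$ is essential, and it is the step I would treat most carefully.
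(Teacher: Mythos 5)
Your proof is correct. Note that the paper does not prove this statement at all --- it is quoted as a classical preliminary directly from \cite[Thm.~13.32]{BauschkeCombettes:2011:1} --- and your argument (Fenchel--Young for $f^{**}\le f$, strong separation of $(x_0,\beta)$ from the nonempty closed convex set $\epi f$ in $\cX\times\R$, with the degenerate vertical-hyperplane case $s=0$ handled by tilting a proper continuous affine minorant) is precisely the standard textbook proof underlying that citation, so there is nothing to compare beyond the purely cosmetic point that in the ``only if'' direction one should either note that conjugates of \emph{arbitrary} functions are lsc and convex (being suprema of continuous affine functions) or first check that $f^*$ is proper before invoking \cref{lemma:Classical_properties_Fenchel_conjugate}\,\cref{item:Classical_properties_conjugate}.
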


\subsection{Differential Geometry}
\label{subsec:Differential_Geometry}

This section is devoted to the collection of necessary concepts from differential geometry.
For details concerning the subsequent definitions, the reader may wish to consult~\cite{DoCarmo:1992:1,Lee:2003:1,Jost:2017:1}.

Suppose that~$\cM$ is a~$d$-dimensional connected, smooth manifold.
The tangent space at $p \in \cM$ is a vector space of dimension~$d$ and it is denoted by $\tangent{p}$.
Elements of $\tangent{p}$, \ie, \emph{tangent vectors}, will be denoted by $X_p$ and $Y_p$ etc.\ or simply $X$ and $Y$ when the base point is clear from the context.
The disjoint union of all tangent spaces, \ie,
\begin{equation}
	\label{eq:Tangent_bundle}
	\tangentBundle \coloneqq \bigcup_{p\in\cM}\tangent{p},
\end{equation}
is called the \emph{tangent bundle} of~$\cM$.
It is a smooth manifold of dimension~$2d$.

The dual space of~$\tangent{p}$ is denoted by~$\cotangent{p}$ and it is called the \emph{cotangent space} to $\cM$ at $p$.
The disjoint union
\begin{equation}
	\cotangentBundle \coloneqq \bigcup_{p\in\cM} \cotangent{p}
\end{equation}
is known as the \emph{cotangent bundle}.
Elements of~$\cotangent{p}$ are called \emph{cotangent vectors} to~$\cM$ at~$p$ and they will be denoted by $\xi_p$ and $\eta_p$ or simply $\xi$ and $\eta$.
The natural duality product between $X \in \tangent{p}$ and $\xi \in \cotangent{p}$ is denoted by $\dual{\xi}{X}=\xi(X) \in\R$.

We suppose that $\cM$ is equipped with a Riemannian metric, \ie, a smoothly varying family of inner products on the tangent spaces $\tangent{p}$.
The metric at $p \in \cM$ is denoted by $\riemannian{\cdot}{\cdot}[p] \colon \tangent{p}\times \tangent{p} \to\R$.
The induced norm on~$\tangent{p}$ is denoted by~$\riemanniannorm{\cdot}[p]$.
The Riemannian metric furnishes a linear bijective correspondence between the tangent and cotangent spaces via the Riesz map and its inverse, the so-called \emph{musical isomorphisms}; see~\cite[Ch.~8]{Lee:2003:1}.
They are defined as
\begin{equation}
	\label{eq:Flat_isomorphism}
	\flat\colon\tangent{p}\ni X \mapsto X^\flat\in\cotangent{p}
\end{equation}
satisfying
\begin{equation}
	\label{eq:How_flat_acts}
	\dual{X^\flat}{Y} = \riemannian{X}{Y}[p] \text{ for all } Y\in\tangent{p},
\end{equation}
and its inverse,
\begin{equation}
	\label{eq:Sharp_isomorphism}
	\sharp\colon\cotangent{p}\ni\xi\mapsto\xi^\sharp\in\tangent{p}
\end{equation}
satisfying
\begin{equation}
	\label{eq:How_sharp_acts}
	\riemannian{\xi^\sharp}{Y}[p] = \dual{\xi}{Y} \text{ for all } Y\in\tangent{p}.
\end{equation}
The $\sharp$-isomorphism further introduces an inner product and an associated norm on the cotangent space $\cotangent{p}$, which we will also denote by~$\riemannian{\cdot}{\cdot}[p]$ and $\riemanniannorm{\cdot}[p]$, since it is clear which inner product or norm we refer to based on the respective arguments.

The tangent vector of a curve~$c \colon I \to \cM$ defined on some open interval~$I$ is denoted by $\dot c(t)$.
A curve is said to be geodesic if the directional (covariant) derivative of its tangent in the direction of the tangent vanishes, \ie, if $\nabla_{\dot c(t)} \dot c(t) = 0$ holds for all $t \in I$, where $\nabla$ denotes the Levi-Cevita connection, cf.~\cite[Ch.~2]{DoCarmo:1992:1} or \cite[Thm.~4.24]{Lee:2018:1}.
As a consequence, geodesic curves have constant speed.

We say that a geodesic connects $p$ to $q$ if $c(0)=p$ and $c(1)=q$ holds.
Notice that a geodesic connecting $p$ to $q$ need not always exist, and if it exists, it need not be unique.
If a geodesic connecting $p$ to $q$ exists, there also exists a shortest geodesic among them, which may in turn not be unique.
If it is, we denote the unique shortest geodesic connecting $p$ and $q$ by $\geodesic<a>{p}{q}$.

Using the length of piecewise smooth curves, one can introduce a notion of metric (also known as Riemannian distance) $\dist{\cdot}{\cdot}$ on $\cM$; see for instance \cite[Ch.~2, pp.33--39]{Lee:2018:1}.
As usual, we denote by
\begin{equation}
	\cB_r(p) \coloneqq \setDef{y\in \cM}{\dist{p}{q} < r}
\end{equation}
the open metric ball of radius $r > 0$ with center $p \in \cM$.
Moreover, we define $\cB_\infty(p) = \bigcup_{r > 0} \cB_r(p)$.

We denote by~$\geodesic{p}{X} \colon I \to \cM$, with $I\subset\R$ being an open interval containing~$0$, a geodesic starting at~$p$ with~$\dot{\gamma}_{p,X}(0) = X$ for some $X\in \tangent{p}$.
We denote the subset of $\tangent{p}$ for which these geodesics are well defined until $t=1$ by $\cG_p$.
A Riemannian manifold~$\cM$ is said to be \emph{complete} if $\cG_p = \tangent{p}$ holds for some, and equivalently for all~$p \in \cM$.

The \emph{exponential map} is defined as the function~$\exponential{p} \colon \cG_p\to\cM$ with~$\exponential{p} X \coloneqq \geodesic{p}{X}(1)$.
Note that~$\exponential{p}(tX) = \geodesic{p}{X}(t)$ holds for every $t\in [0,1]$.
We further introduce the set $\cG'_p\subset\tangent{p}$ as some open ball of radius $0 < r \le \infty$ about the origin such that $\exponential{p} \colon \cG'_p \to \exponential{p}(\cG'_p)$ is a diffeomorphism.
The \emph{logarithmic map} is defined as the inverse of the exponential map, \ie,~$\logarithm{p} \colon \exponential{p}(\cG'_p) \to \cG'_p \subset \tangent{p}$.

In the particular case where the sectional curvature of the manifold is nonpositive everywhere, all geodesics connecting any two distinct points are unique.
If furthermore, the manifold
is simply connected and complete, the manifold is called a \emph{Hadamard manifold}, see~\cite[p.10]{Bacak:2014:2}.
Then the exponential and logarithmic maps are defined globally.

Given~$p,q\in \cM$ and~$X\in\tangent{p}$, we denote by~$\parallelTransport{p}{q}{X}$ the so-called \emph{parallel transport} of $X$ along a unique shortest geodesic~$\geodesic<a>{p}{q}$.
Using the musical isomorphisms presented above, we also have a parallel transport of cotangent vectors along geodesics according to
\begin{equation}
	\label{eq:Definition_PT_for_covectors}
	\parallelTransport{p}{q}{\xi_p} \coloneqq \paren[big](){\parallelTransport{p}{q}{\xi_p^\sharp}}^\flat.
\end{equation}

Finally, by a Euclidean space we mean $\R^d$ (where $\tangent{p}[\R^d] = \R^d$ holds), equipped with the Riemannian metric given by the Euclidean inner product.
In this case, $\exponential{p} X = p + X$ and $\logarithm{p} q = q - p$ hold.

\subsection{Convex Analysis on Riemannian Manifolds}
\label{subsec:Convex_analysis_on_Riemannian_manifolds}

Throughout this subsection, $\cM$ is assumed to be a complete and connected Riemannian manifold and we are going to recall the basic concepts of convex analysis on $\cM$.
The central idea is to replace straight lines in the definition of convex sets in Euclidean vector spaces by geodesics.

\begin{definition}[{\cite[Def.~IV.5.1]{Sakai:1996:1}}]
	\label{def:Geodesically_convex_set}%
	A subset~$\cC\subset \cM$ of a Riemannian manifold~$\cM$ is said to be \emph{strongly convex} if for any two points $p, q \in \cC$, there exists a unique shortest geodesic of $\cM$ connecting~$p$ to $q$, and that geodesic, denoted by $\geodesic<a>{p}{q}$, lies completely in $\cC$.
\end{definition}
On non-Hadamard manifolds, the notion of strongly convex subsets can be quite restrictive.
For instance, on the round sphere~$\S^{n}$ with $n \ge 1$, a metric ball $\cB_r(p)$ is strongly convex if and only if $r < \pi/2$.

\begin{definition}
	\label{def:tangent_subset}%
	Let $\cC\subset\cM$ and $p\in\cC$.
	We introduce the tangent subset $\LocalExponentialPreimage{p}{\cC} \subset\tangent{p}$ as
	\begin{equation*}
		\LocalExponentialPreimage{p}{\cC}
		\coloneqq
		\setDef[auto]{X \in \tangent{p}}{\exponential{p} X \in \cC \text{ and } \riemanniannorm{X}[p] = \dist[big]{\exponential{p} X}{p}},
	\end{equation*}
	a localized variant of the pre-image of the exponential map.
\end{definition}

Note that if~$\cC$ is strongly convex, the exponential and logarithmic maps introduce bijections between~$\cC$ and~$\LocalExponentialPreimage{p}{\cC}$ for any~$p\in\cC$.
In particular, on a Hadamard manifold~$\cM$, we have $\LocalExponentialPreimage{p}{\cM} = \tangent{p}$.

The following definition states the important concept of convex functions on Riemannian manifolds.
\begin{definition}[{\cite[Def.~IV.5.9]{Sakai:1996:1}}] \hfill
	\label{def:F_geodesically_convex}%
	\begin{enumerate}
		\item
			A function~$F\colon \cM\to\eR$ is \emph{proper} if $\dom F \coloneqq \setDef{p\in \cM}{F(p)<\infty} \neq \emptyset$ and $F(p)>-\infty$ holds for all~$p\in\cM$.
		\item
			\label{item:F_geodesically_convex}
			Suppose that $\cC \subset \cM$ is strongly convex.
			A function~$F\colon\cM\to \eR$ is called geodesically convex on~$\cC\subset \cM$ if, for all~$p,q\in \cC$, the composition $F \circ \geodesic<a>{p}{q}$ is a convex function on $[0,1]$ in the classical sense.
			Similarly, $F$ is called strictly or strongly convex if $F \circ \geodesic<a>{p}{q}$ fulfills these properties.
		\item
			\label{item:epigraph}
			Suppose that $A \subset \cM$.
			The \emph{epigraph} of a function $F \colon A \to \eR$ is defined as
			\begin{equation}
				\label{eq:Epigraph}
				\epi F
				\coloneqq
				\setDef{(p,\alpha)\in A \times \R}{F(p) \leq \alpha}.
			\end{equation}
		\item
			\label{item:lsc}
			Suppose that $A \subset \cM$.
			A proper function~$F\colon A\to\eR$ is called \emph{lower semicontinuous (lsc)} if $\epi F$ is closed.
	\end{enumerate}
\end{definition}
Suppose that $\cC \subset \cM$ is strongly convex and $F \colon \cC \to \eR$, then an equivalent way to describe its lower semicontinuity (\Cref{item:lsc}) is to require that the composition
\begin{equation}\label{eq:F_composed_with_exp_m}
	F \circ \exponential{m} \colon \LocalExponentialPreimage{m}{\cC} \to \eR
\end{equation}
is lsc for an arbitrary $m \in \cC$ in the classical sense, where $\LocalExponentialPreimage{m}{\cC}$ is defined in \cref{def:tangent_subset}.

We now recall the notion of the subdifferential of a geodesically convex function defined on a Riemannian manifold.
\begin{definition}[{\cite{FerreiraOliveira:1998:1}, \cite[Def.~3.4.4]{Udriste:1994:1}}]
	\label{def:Subdifferential}%
	Suppose that $\cC \subset \cM$ is strongly convex.
	The \emph{subdifferential}~$\partial_\cM F$ on~$\cC$ at a point~$p\in \cC$ of a proper, geodesically convex function~$F\colon\cC\to\eR$ is given by
	\begin{equation}\label{eq:Subdifferential_manifold}
		\partial_\cM F(p)
		\coloneqq
		\setDef[auto]{\xi \in \cotangent{p}}{F(q) \geq F(p) + \dual{\xi}{\logarithm{p} q} \text{ for all } q \in \cC}
		.
	\end{equation}
\end{definition}
In the above notation, the index~$\cM$ refers to the fact that it is the Riemannian subdifferential; the set~$\cC$ should always be clear from the context.

We further recall the definition of the proximal map, which was generalized to Hadamard manifolds in~\cite{FerreiraOliveira:2002:1}.
\begin{definition}
	\label{def:prox}%
	Let~$\cM$ be a Riemannian manifold,~$F\colon \cM \rightarrow \eR$ be proper, and~$\lambda > 0$.
	The \emph{proximal map} of~$F$ is defined as
	\begin{equation}
		\label{eq:proximal_map}
		\proxOp_{\lambda \, F }(p)
		\coloneqq
		\Argmin_{q \in \cM}
		\paren[auto]\{\}{\frac{1}{2} \dist{p}{q}[2] + \lambda \, F(q)}
		.
	\end{equation}
\end{definition}
Note that on Hadamard manifolds, the proximal map is single-valued for proper geodesically convex functions; see~\cite[Ch.~2.2]{Bacak:2014:2} or~\cite[Lem.~4.2]{FerreiraOliveira:2002:1} for details.
The following lemma is used later on to characterize the proximal map using the subdifferential on Hadamard manifolds.

\begin{lemma}[{\cite[Lem.~4.2]{FerreiraOliveira:2002:1}}]
	\label{lem:proxSubdiff}%
	Let~$F \colon \cM \to \eR$ be a proper, geodesically convex function on the Hadamard manifold~$\cM$.
	Then the equality $q = \prox{\lambda \, F}{p}$ is equivalent to
	\begin{equation}
		\label{eq:proxSubdiff}
		\frac{1}{\lambda}\paren[big](){\logarithm{q} p}^{\flat} \in \partial_\cM F(q).
	\end{equation}
\end{lemma}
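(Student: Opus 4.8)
The plan is to rewrite the proximal map as an unconstrained minimization problem and to characterize its minimizer through the Riemannian subdifferential. By \cref{def:prox}, the identity $q = \prox{\lambda F}{p}$ means precisely that $q$ is a global minimizer of
\[
	h \colon \cM \to \eR, \qquad h(x) \coloneqq \frac{1}{2}\dist{p}{x}[2] + \lambda F(x).
\]
Since $\cM$ is a Hadamard manifold, it is itself strongly convex in the sense of \cref{def:Geodesically_convex_set} and carries a globally defined logarithm, so $\partial_\cM h$ is well defined on $\cC = \cM$. The key observation is that, directly from \cref{def:Subdifferential}, the inclusion $0 \in \partial_\cM h(q)$ unwinds to $h(x) \ge h(q) + \dual{0}{\logarithm{q} x} = h(q)$ for all $x \in \cM$; that is, $0 \in \partial_\cM h(q)$ is equivalent to $q$ being a global minimizer of $h$. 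Thus the entire statement reduces to computing $\partial_\cM h(q)$.

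To this end I would split $h = g + \lambda F$ with $g(x) \coloneqq \frac{1}{2}\dist{p}{x}[2]$. On a Hadamard manifold $g$ is smooth and geodesically convex, and its Riemannian gradient is the well-known expression $\operatorname{grad} g(q) = -\logarithm{q} p$, so its differential, viewed as a cotangent vector, is $-\bigl(\logarithm{q} p\bigr)^\flat$. The heart of the proof is then the sum rule
\[
	\partial_\cM h(q) = -\bigl(\logarithm{q} p\bigr)^\flat + \lambda\, \partial_\cM F(q),
\]
which is available because one summand is smooth. Granting this, $0 \in \partial_\cM h(q)$ is equivalent to $\bigl(\logarithm{q} p\bigr)^\flat \in \lambda\,\partial_\cM F(q)$, and dividing by $\lambda$ yields exactly \eqref{eq:proxSubdiff}.

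It remains to establish the sum rule, which is the main obstacle. The inclusion $\partial_\cM h(q) \supseteq -\bigl(\logarithm{q} p\bigr)^\flat + \lambda\, \partial_\cM F(q)$ is routine: adding the subgradient inequality for $F$ at $q$ to the gradient inequality for the convex, differentiable function $g$ at $q$ produces a subgradient inequality for $h$. The reverse inclusion requires a limiting argument. Given $\xi \in \partial_\cM h(q)$ and an arbitrary $x \in \dom F$, I would test the subgradient inequality along the geodesic $\gamma(t) \coloneqq \exponential{q}(t \logarithm{q} x)$, for which $\logarithm{q} \gamma(t) = t\logarithm{q} x$. This gives, for every $t \in (0,1]$,
\[
	\frac{g(\gamma(t)) - g(q)}{t} + \lambda\,\frac{F(\gamma(t)) - F(q)}{t} \ge \dual{\xi}{\logarithm{q} x}.
\]
Because $t \mapsto F(\gamma(t))$ is convex, its difference quotient is nondecreasing, so the middle term is bounded above by $\lambda\bigl(F(x) - F(q)\bigr)$; letting $t \to 0^+$ and using differentiability of $g$ to evaluate the first term as $-\dual{\bigl(\logarithm{q} p\bigr)^\flat}{\logarithm{q} x}$ yields $\dual{\xi + \bigl(\logarithm{q} p\bigr)^\flat}{\logarithm{q} x} \le \lambda\bigl(F(x) - F(q)\bigr)$. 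Since $x \in \dom F$ was arbitrary, and the inequality is trivial for $x \notin \dom F$, this is precisely $\frac{1}{\lambda}\bigl(\xi + \bigl(\logarithm{q} p\bigr)^\flat\bigr) \in \partial_\cM F(q)$, completing the reverse inclusion. The subtlety to watch is that the limit $t \to 0^+$ must be taken only after bounding the $F$-difference quotient, so that no directional derivative of the possibly nonsmooth $F$ need exist; the monotonicity of the quotient coming from geodesic convexity is exactly what makes this legitimate.
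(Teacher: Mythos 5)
Your proof is correct. Note, however, that the paper does not prove this lemma at all: it is imported verbatim from the cited reference (Ferreira--Oliveira, Lem.~4.2), so there is no in-paper argument to compare against; what you have done is reconstruct the missing proof, and your reconstruction is the standard one. All the essential steps check out: the equivalence of $q = \prox{\lambda F}{p}$ with $0 \in \partial_\cM h(q)$ for $h = \tfrac{1}{2}\dist{p}{\cdot}[2] + \lambda F$ follows directly from \cref{def:prox,def:Subdifferential} (with single-valuedness of the proximal map on Hadamard manifolds, as noted after \cref{def:prox}, guaranteeing that the minimizer is unique); the squared distance is globally smooth and geodesically convex on a Hadamard manifold with $\operatorname{grad}\bigl(\tfrac12\dist{p}{\cdot}[2]\bigr)(q) = -\logarithm{q}p$; and your proof of the sum rule is sound, including the reverse inclusion, where you correctly exploit that $\logarithm{q}\gamma(t) = t\logarithm{q}x$ holds globally (uniqueness of geodesics on Hadamard manifolds) and that the difference quotient of the geodesically convex $F$ is monotone, so the limit $t \to 0^+$ only needs differentiability of the smooth summand. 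This last point is exactly the subtlety that makes the argument work for nonsmooth $F$, and you handle it properly.
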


\section{Fenchel Conjugation Scheme on Manifolds}
\label{sec:Fenchel_conjugation_scheme}

In this section we present a novel Fenchel conjugation scheme for extended real-valued functions defined on manifolds.
We generalize ideas from~\cite{Bertsekas:1978:1}, who defined local conjugation on manifolds embedded in $\R^d$ specified by nonlinear equality constraints.

Throughout this section, suppose that $\cM$ is a Riemannian manifold and $\cC \subset \cM$ is strongly convex.
The definition of the Fenchel conjugate of~$F$ is motivated by \cite[Thm.~12.1]{Rockafellar:1970:1}.

\begin{definition}
	\label{def:Fenchel_conjugate_on_M}%
	Suppose that $F\colon\cC \to\eR$, where $\cC\subset\cM$ is strongly convex, and $m \in \cC$.
	The \emph{$m$-Fenchel conjugate} of $F$ is defined as the function $F_m^*\colon \cotangent{m} \to \eR$ such that
	\begin{equation}
		\label{eq:Fenchel_conjugate_on_M}
		F_m^*(\xi_m)
		\coloneqq
		\sup_{X \in\LocalExponentialPreimage{m}{\cC}}
		\paren[auto]\{\}{\dual{\xi_m}{X} - F(\exponential{m} X)}
		,
		\quad
		\xi_m \in \cotangent{m}
		.
	\end{equation}
\end{definition}

\begin{remark}
	\label{rem:dependency_base}%
	Note that the Fenchel conjugate~$F_m^*$ depends on both the strongly convex set~$\cC$ and on the base point~$m$.
	Observe as well that when~$\cM$ is a Hadamard manifold, it is possible to have $\cC = \cM$.
	In the particular case of the Euclidean space $\cC = \cM = \R^d$, \cref{def:Fenchel_conjugate_on_M} becomes
	\makeatletter
	\begin{equation*}
		\begin{aligned}
			F_m^* (\xi)
			&
			=
			\sup_{X \in\R^d}
			\paren[auto]\{\}{\dual{\xi}{X} - F(m+X)}
			=
			\sup_{Y \in\R^d} \paren[auto]\{\}{\dual{\xi}{Y-m} - F(Y)}
			\IfSubStr{\@currentclass}{svjour3.cls}{%
				\\
				&
				}{%
			}
			=
			F^*(\xi) - \dual{\xi}{m}
		\end{aligned}
	\end{equation*}
	\makeatother
	for $\xi \in \R^d$.
	Hence, taking~$m$ to be the zero vector we recover the classical (Euclidean) conjugate~$F^*$ from~\cref{def:Classical_Fenchel_conjugate} with $\cX = \R^n$.
\end{remark}

\begin{example}
	\label{example:distance_squared}
	Let $\cM$ be a Hadamard manifold, $m\in\cM$ and $F\colon\cM\to\R$ defined as $F(p) = \frac{1}{2} \dist{p}{m}[2]$.
	Due to the fact that
	\begin{equation*}
		F(p)
		=
		\frac{1}{2} \dist{p}{m}[2]
		=
		\frac{1}{2} \riemanniannorm{\logarithm{m} p}[m]^2,
	\end{equation*}
	we obtain from \cref{def:Fenchel_conjugate_on_M} the following representation of the $m$-conjugate of~$F$:
	\begin{align*}
		F_m^*(\xi_m)
		&
		=
		\sup_{X \in\tangent{m}} \paren[Big]\{\}{\dual{\xi_m}{X} - \frac{1}{2}\riemanniannorm{\logarithm{m} \exponential{m} X}[m]^2}
		\\
		&
		=
		\sup_{X \in\tangent{m}} \paren[Big]\{\}{\dual{\xi_m}{X} - \frac{1}{2}\riemanniannorm{X}[m]^2}
		=
		\frac{1}{2}\riemanniannorm{\xi_m}[m]^2
		.
	\end{align*}
	Notice that the conjugate w.r.t.\ base points other than~$m$ does not have a similarly simple expression.
	In the Euclidean setting with $\cM = \R^d$ and $F(p) = \frac{1}{2} \norm{p-m}^2$, it is well known that
	\begin{equation*}
		F_0^*(\xi)
		=
		F^*(\xi)
		=
		\frac{1}{2} \norm{\xi + m}^2 - \frac{1}{2} \norm{m}^2
	\end{equation*}
	holds and thus, by \cref{rem:dependency_base},
	\begin{equation*}
		F_m^*(\xi)
		=
		F^*(\xi)
		-
		\dual{\xi}{m}
		=
		\frac{1}{2} \norm{\xi}^2
	\end{equation*}
	holds in accordance with the expression obtained above.
\end{example}

We now establish a result regarding the properness of the~$m$-conjugate function, generalizing a result from \cite[Prop.~13.9]{BauschkeCombettes:2011:1}.
\begin{lemma}
	\label{lemma:F*_proper_implies_F_proper}%
	Suppose that $F\colon\cC \to\eR$ and $m \in \cC$ where $\cC$ is strongly convex.
	If~$F_m^*$ is proper, then~$F$ is also proper.
\end{lemma}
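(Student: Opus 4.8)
The plan is to argue by contraposition: I would assume that $F$ fails to be proper and deduce that $F_m^*$ is not proper either. Recall from \cref{def:F_geodesically_convex} that a function fails to be proper precisely when either $\dom F = \emptyset$ (so that $F \equiv +\infty$ on $\cC$), or else $F$ attains the value $-\infty$ at some point of $\cC$. I would treat these two cases separately, and in both invoke the fact noted after \cref{def:tangent_subset}: since $\cC$ is strongly convex, the exponential map restricts to a bijection between $\LocalExponentialPreimage{m}{\cC}$ and $\cC$. In particular $\LocalExponentialPreimage{m}{\cC} \neq \emptyset$, because $0 \in \tangent{m}$ satisfies $\exponential{m} 0 = m \in \cC$ and $\riemanniannorm{0}[m] = 0 = \dist{m}{m}$.

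First, suppose $F \equiv +\infty$ on $\cC$. Then for every $\xi_m \in \cotangent{m}$ each term in the supremum \eqref{eq:Fenchel_conjugate_on_M} defining $F_m^*(\xi_m)$ equals $\dual{\xi_m}{X} - (+\infty) = -\infty$, and since the index set $\LocalExponentialPreimage{m}{\cC}$ is nonempty, the supremum of a nonempty family of copies of $-\infty$ is again $-\infty$. Hence $F_m^* \equiv -\infty$, so $F_m^*$ takes the value $-\infty$ and is not proper.

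Second, suppose $F(\bar p) = -\infty$ for some $\bar p \in \cC$. By the bijection above there is a $\bar X \in \LocalExponentialPreimage{m}{\cC}$ with $\exponential{m} \bar X = \bar p$. Then for every $\xi_m \in \cotangent{m}$ the single term of \eqref{eq:Fenchel_conjugate_on_M} associated with $\bar X$ already equals $\dual{\xi_m}{\bar X} - F(\bar p) = +\infty$, whence $F_m^*(\xi_m) = +\infty$. Thus $\dom F_m^* = \emptyset$ and $F_m^*$ is not proper.

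In either case $F_m^*$ is not proper, which is the contrapositive of the claim; hence properness of $F_m^*$ forces properness of $F$. I do not anticipate a genuine obstacle here. The only points requiring care are the conventions for $\pm\infty$ arithmetic inside the supremum, the nonemptiness of $\LocalExponentialPreimage{m}{\cC}$ used in the first case, and, in the second case, the surjectivity of $\exponential{m}$ onto $\cC$, which guarantees that a point where $F = -\infty$ actually corresponds to an admissible index $\bar X$ in the defining supremum. All three are immediate from the strong convexity of $\cC$ and \cref{def:tangent_subset}.
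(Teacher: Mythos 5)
Your proof is correct and is essentially the paper's own argument, merely reorganized as a contraposition: the paper directly picks $\xi_m \in \dom F_m^*$ and rules out the same two failure modes ($F \equiv +\infty$ forcing $F_m^* = -\infty$, and $F(p) = -\infty$ at some point forcing $F_m^* \equiv +\infty$) by contradiction, using the same bijection between $\cC$ and $\LocalExponentialPreimage{m}{\cC}$. No substantive difference.
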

\begin{proof}
	Since $F_m^*$ is proper we can pick some $\xi_m\in\dom F_m^*$.
	Hence, applying \cref{def:Fenchel_conjugate_on_M} we get
	\begin{equation*}
		F_m^*(\xi_m) =
		\sup_{X\in\LocalExponentialPreimage{m}{\cC}}
		\paren[auto]\{\}{\dual{\xi_m}{X} - F(\exponential{m} X)} < +\infty,
	\end{equation*}
	so there must exist at least one~$\bar X \in \LocalExponentialPreimage{m}{\cC}$ such that~$F(\exponential{m} \bar X) \in \R$.
	This shows that $F \not \equiv + \infty$.
	On the other hand, let $p \in \cC$ and take $X \coloneqq \logarithm{m} p$.
	If $F(p)$ were equal to $- \infty$, then $F_m^* (\xi_m) = +\infty$ for any $\xi_m \in \cotangent{m}$, which would contradict the properness of $F_m^*$.
	Consequently, $F$ is proper.
\end{proof}

\begin{definition}
	\label{definition:biconjugate}%
	Suppose that $F\colon\cC \to\eR$, where $\cC$ is strongly convex, and $m,m' \in \cC$.
	Then the \emph{($mm^\prime$)-Fenchel biconjugate function} $F_{mm'}^{**} \colon \cC\to\R$ is defined as
	\begin{equation}
		\label{eq:Manifold_Biconjugate_function_original}
		F_{mm'}^{**}(p)
		=
		\sup_{\xi_{m^\prime} \in \cotangent{m'}}
		\paren[auto]\{\}{\dual{\xi_{m^\prime}}{\logarithm{m'} p} - F_m^*(\parallelTransport{m^\prime}{m}{\xi_{m^\prime}})}
		,
		\quad p \in \cC
		.
	\end{equation}
\end{definition}
Note that $F_{mm'}^{**}$ is again a function defined on the Riemannian manifold.
The relation between $F_{mm}^{**}$ and $F$ is discussed further below, as well as properties of higher order conjugates.

\begin{lemma}
	\label{lemma:Manifold_inequality_biconjugate}%
	Suppose that $F\colon\cC \to\eR$ and $m \in \cC$.
	Then $F_{m m}^{**}(p) \leq F(p)$ holds for all~$p\in \cC$.
\end{lemma}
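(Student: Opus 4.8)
The plan is to reduce everything to a Fenchel--Young inequality that follows immediately from the supremum in \cref{def:Fenchel_conjugate_on_M}, thereby avoiding any appeal to convexity or properness of~$F$. First I would specialize \cref{definition:biconjugate} to the case $m' = m$. Here the parallel transport $\parallelTransport{m}{m}{\cdot}$ is transport along the constant geodesic at~$m$, hence the identity on $\cotangent{m}$, so that
\begin{equation*}
	F_{mm}^{**}(p)
	=
	\sup_{\xi_m \in \cotangent{m}}
	\paren[auto]\{\}{\dual{\xi_m}{\logarithm{m} p} - F_m^*(\xi_m)},
	\quad p \in \cC.
\end{equation*}
This removes the transport bookkeeping that complicates the general biconjugate and leaves a structure mirroring the Euclidean biconjugate~\eqref{eq:Classical_biconjugate_function}.

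Next I would record the Fenchel--Young inequality adapted to the $m$-conjugate. By the very definition of $F_m^*$ as a supremum, for every $X \in \LocalExponentialPreimage{m}{\cC}$ and every $\xi_m \in \cotangent{m}$ one has
\begin{equation*}
	\dual{\xi_m}{X} - F(\exponential{m} X) \leq F_m^*(\xi_m).
\end{equation*}
No convexity, lower semicontinuity, or properness of~$F$ enters here; this is purely the defining property of the pointwise supremum.

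The main step is then to evaluate this inequality at a well-chosen~$X$. Fix $p \in \cC$. Since $\cC$ is strongly convex and $m \in \cC$, the vector $X \coloneqq \logarithm{m} p$ lies in $\LocalExponentialPreimage{m}{\cC}$ and satisfies $\exponential{m} X = p$; this is exactly the bijection between~$\cC$ and~$\LocalExponentialPreimage{m}{\cC}$ noted after \cref{def:tangent_subset}. Substituting, the Fenchel--Young inequality becomes $\dual{\xi_m}{\logarithm{m} p} - F_m^*(\xi_m) \leq F(p)$ for every $\xi_m \in \cotangent{m}$, and taking the supremum over $\xi_m$ yields $F_{mm}^{**}(p) \leq F(p)$, as claimed.

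I expect the only genuine care point to be the extended-real arithmetic, since~$F$ is not assumed proper. If $F(p) = +\infty$ the bound is trivial; if $F(p) = -\infty$, then the defining supremum forces $F_m^*(\xi_m) = +\infty$ for all $\xi_m$, so the bracketed term above equals $-\infty$ and the inequality (indeed equality) again holds. Thus the single argument covers all three cases uniformly, and the only geometric fact that must be verified explicitly is the identity $\exponential{m}(\logarithm{m} p) = p$ together with $\logarithm{m} p \in \LocalExponentialPreimage{m}{\cC}$, which is precisely the strong convexity of~$\cC$.
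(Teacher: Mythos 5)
Your proposal is correct and follows essentially the same route as the paper's proof: both reduce the $(mm)$-biconjugate (with the trivial parallel transport) to the defining supremum of $F_m^*$, and both obtain the bound by evaluating at the single point $X = \logarithm{m} p \in \LocalExponentialPreimage{m}{\cC}$, where the duality pairings cancel. The paper phrases this as bounding an inner infimum inside nested suprema rather than as a Fenchel--Young inequality followed by a supremum over $\xi_m$, but that is only a difference in bookkeeping; your explicit treatment of the $\pm\infty$ cases is a harmless (and slightly more careful) addition.
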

\begin{proof}
	Applying~\eqref{eq:Manifold_Biconjugate_function_original}, we have
	\begin{align*}
		F_{m m}^{**}(p)
		&
		=
		\sup_{\xi_m\in\cotangent{m}} \paren[auto]\{\}{\dual{\xi_m}{\logarithm{m} p} - F_m^*(\xi_m)}
		\\
		&
		=
		\sup_{\xi_m\in\cotangent{m}} \paren[Big]\{\}{\dual{\xi_m}{\logarithm{m} p} - \sup_{X\in\LocalExponentialPreimage{m}{\cC}} \paren[auto]\{\}{\dual{\xi_m}{X} - F(\exponential{m} X)}}
		\\
		&
		=
		\sup_{\xi_m\in\cotangent{m}} \paren[Big]\{\}{\dual{\xi_m}{\logarithm{m} p} + \inf_{X\in\LocalExponentialPreimage{m}{\cC}} \paren[auto]\{\}{-\dual{\xi_m}{X} + F(\exponential{m} X)}}
		\\
		&
		\leq
		\sup_{\xi_m\in\cotangent{m}} \paren[auto]\{\}{\dual{\xi_m}{\logarithm{m} p} - \dual{\xi_m}{\logarithm{m} p} + F \paren[big](){\exponential{m} \logarithm{m} p}}
		\\
		&
		=
		F(p)
		,
	\end{align*}
	which finishes the proof.
\end{proof}

The following lemma proves that our definition of the Fenchel conjugate enjoys properties~\cref{item:Classical_properties_inequality}--\cref{item:Classical_properties_lambda} stated in \cref{lemma:Classical_properties_Fenchel_conjugate} for the classical definition of the conjugate on a Hilbert space.
Results parallel to properties~\cref{item:Classical_properties_conjugate} and \cref{item:Classical_properties_Fenchel-Young} in \cref{lemma:Classical_properties_Fenchel_conjugate} will be given in \cref{lem:pro.Fm} and \cref{prop:Manifold_Fenchel_Young_inequality}, respectively.
Observe that an analogue of property~\cref{item:Classical_properties_Fenchel_shift} in \cref{lemma:Classical_properties_Fenchel_conjugate} cannot be expected for $F\colon \cM\to\R$ due to the lack of a concept of linearity on manifolds.

\begin{lemma}
	\label{lemma:Manifold_conjugate_properties}%
	Suppose that $\cC \subset \cM$ is strongly convex.
	Let $F, G \colon \cC \to \eR$ be proper functions, $m \in \cC$, $\alpha \in \R$ and $\lambda > 0$.
	Then the following statements hold.
	\begin{enumerate}
		\item
			\label{item:FDlessThan}
			If $F(p)\leq G(p)$ for all $p\in \cC$, then $F_m^*(\xi_m) \geq G_m^*(\xi_m)$ for all $\xi_m \in \cotangent{m}$.
		\item
			\label{item:FDplus}
			If $G(p) = F(p)+\alpha$ for all $p\in \cC$, then $G_m^*(\xi_m) = F_m^*(\xi_m)-\alpha$ for all $\xi_m\in\cotangent{m}$.
		\item
			\label{item:FDscale}
			If $G(p) = \lambda \, F(p)$ for all $p\in \cC$, then $G_{m}^*(\xi_m) = \lambda \, F_m^* \paren[big](){\frac{\xi_m}{\lambda}}$ for all~$\xi_m\in\cotangent{m}$.
	\end{enumerate}
\end{lemma}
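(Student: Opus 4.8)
The plan is to obtain all three statements directly from the defining supremum in \cref{def:Fenchel_conjugate_on_M}, mirroring the classical arguments behind \cref{lemma:Classical_properties_Fenchel_conjugate}~\cref{item:Classical_properties_inequality}--\cref{item:Classical_properties_lambda}. The guiding observation is that the only difference between the Euclidean conjugate and the $m$-conjugate is that the linear pairing is replaced by the duality product $\dual{\xi_m}{X}$ between $\cotangent{m}$ and $\tangent{m}$, while the evaluation of $F$ happens through the composition $F(\exponential{m} X)$ with $X$ ranging over the fixed set $\LocalExponentialPreimage{m}{\cC}$. Since each assertion only concerns how the supremand transforms under the respective operation applied to $F$, the supremum over $X$ can be manipulated exactly as in the linear case, and crucially the domain $\LocalExponentialPreimage{m}{\cC}$ is the same for $F$ and $G$ throughout.

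For \cref{item:FDlessThan} I would first note that $F(p)\le G(p)$ for all $p\in\cC$ implies in particular $F(\exponential{m} X)\le G(\exponential{m} X)$ for every admissible $X\in\LocalExponentialPreimage{m}{\cC}$, since $\exponential{m} X\in\cC$ by definition of that set. Negating and adding $\dual{\xi_m}{X}$ yields a pointwise inequality between the two supremands, and passing to the supremum over $X$ preserves it, giving $F_m^*(\xi_m)\ge G_m^*(\xi_m)$. For \cref{item:FDplus} the constant $\alpha$ simply detaches from the supremand: writing out $G_m^*(\xi_m)$ and substituting $G(\exponential{m} X)=F(\exponential{m} X)+\alpha$, the term $-\alpha$ is independent of $X$ and may be pulled outside the supremum, producing $G_m^*(\xi_m)=F_m^*(\xi_m)-\alpha$.

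For \cref{item:FDscale} I would factor the positive scalar $\lambda$ out of the supremum after substituting $G(\exponential{m} X)=\lambda\,F(\exponential{m} X)$, and then use linearity of the duality product in its cotangent argument to rewrite $\tfrac{1}{\lambda}\dual{\xi_m}{X}=\dual{\xi_m/\lambda}{X}$; recognizing the resulting supremum as $F_m^*(\xi_m/\lambda)$ completes the identity. The hypothesis $\lambda>0$ is what permits factoring $\lambda$ out of the supremum without reversing it.

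I do not expect any genuine obstacle here: the proofs are computational, and because no shift of base point is involved, the exponential map $\exponential{m}$ and the pairing $\dual{\cdot}{\cdot}$ enter identically on both sides of each equation. The only point requiring a moment's care is the linearity invoked in \cref{item:FDscale}, which holds simply because, for fixed $X$, the map $\dual{\cdot}{X}$ is a genuine linear functional on $\cotangent{m}$. This is also the reason an analogue of the classical shift property \cref{item:Classical_properties_Fenchel_shift} is absent, as already noted before the statement: a shift $p\mapsto p+b$ has no manifold counterpart, so only these three among the classical rules survive verbatim.
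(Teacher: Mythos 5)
Your proposal is correct and follows essentially the same route as the paper's proof: each item is obtained by substituting the hypothesis into the supremand of \cref{def:Fenchel_conjugate_on_M}, exploiting that $\exponential{m}$ and the domain $\LocalExponentialPreimage{m}{\cC}$ are identical on both sides, and then either passing the pointwise inequality through the supremum, pulling out the constant $-\alpha$, or factoring out $\lambda>0$ and absorbing $\lambda^{-1}$ into the cotangent argument of the pairing. No gaps; your side remarks on the role of $\lambda>0$ and the absence of a shift rule also match the paper's discussion.
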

\begin{proof}
	If $F(p) \leq G(p)$ for all~$p\in \cC$, then it also holds~$F(\exponential{m} X) \leq G(\exponential{m} X)$ for every~$X\in\LocalExponentialPreimage{m}{\cC}$.
	Then we have for any~$\xi_{m}\in\cotangent{m}$ that
	\begin{align*}
		F_m^*(\xi_m)
		&
		=
		\sup_{X \in \LocalExponentialPreimage{m}{\cC}} \paren[auto]\{\}{\dual{\xi_m}{X} - F(\exponential{m} X)}
		\\
		&
		\geq
		\sup_{X\in\LocalExponentialPreimage{m}{\cC}} \paren[auto]\{\}{\dual{\xi_m}{X} - G(\exponential{m} X)}
		=
		G_m^*(\xi_m)
		.
	\end{align*}
	This shows \cref{item:FDlessThan}.
	Similarly, we prove~\cref{item:FDplus}:
	let us suppose that~$G(p) = F(p)+\alpha$ for all~$p\in \cC$.
	Then $G(\exponential{m} X) = F(\exponential{m} X) + \alpha$ for every~$X\in\LocalExponentialPreimage{m}{\cC}$.
	Hence, for any~$\xi_{m}\in\cotangent{m}$ we obtain
	\begin{align*}
		G_m^*(\xi_m)
		&
		=
		\sup_{X\in\LocalExponentialPreimage{m}{\cC}} \paren[auto]\{\}{\dual{\xi_m}{X} - G(\exponential{m} X)}
		\\
		&
		=
		\sup_{X\in\LocalExponentialPreimage{m}{\cC}} \paren[auto]\{\}{\dual{\xi_m}{X} - (F(\exponential{m} X) + \alpha)}
		\\
		&
		=
		\sup_{X\in\LocalExponentialPreimage{m}{\cC}} \paren[audo]\{\}{\dual{\xi_m}{X} - F(\exponential{m} X)} - \alpha
		=
		F_m^*(\xi_m) -\alpha
		.
	\end{align*}
	Let us now prove~\cref{item:FDscale} and suppose that~$\lambda >0$ and~$G(\exponential{m} X) = \lambda \, F(\exponential{m} X)$ for all~$X\in\LocalExponentialPreimage{m}{\cC}$.
	Then we have for any~$\xi_{m}\in\cotangent{m}$ that
	\begin{align*}
		G_m^*(\xi_m)
		&
		=
		\sup_{X\in\LocalExponentialPreimage{m}{\cC}} \paren\{\}{\dual{\xi_m}{X} - G(\exponential{m} X)}
		\\
		&
		=
		\sup_{X\in\LocalExponentialPreimage{m}{\cC}} \paren[auto]\{\}{\dual{\xi_m}{X} - \lambda \, F(\exponential{m} X)}
		\\
		&
		=
		\lambda \sup_{X\in\LocalExponentialPreimage{m}{\cC}} \paren[auto]\{\}{\dual{\lambda^{-1}{\xi_m}}{X} - F(\exponential{m} X)}
		=
		\lambda \, F_m^*\paren[big](){\tfrac{\xi_m}{\lambda}}
		.
	\end{align*}
\end{proof}

Suppose that $F\colon\cC \to\eR$, where $\cC$ is strongly convex, and $m,m',m'' \in \cC$.
The following proposition addresses the triconjugate~$F_{mm'm''}^{***} \colon \cotangent{m''} \to \eR$ of~$F$, which we define as
\begin{equation}
	\label{eq:triconjugate}
	F_{mm'm''}^{***}
	\coloneqq
	(F_{mm'}^{**})_{m''}^*
	.
\end{equation}
\begin{proposition}
	\label{proposition:triconjugate}
	Suppose that $\cM$ is a Hadamard manifold, $m \in \cM$ and $F \colon \cM \to \eR$.
	Then the following holds:
	\begin{equation}\label{eq:m_Triconjugate}
		F_{mmm}^{***} = (F_{mm}^{**})_{m}^* = (F_m^*)^{**} = F_m^*
		\quad \text{on } \cotangent{m}.
	\end{equation}
\end{proposition}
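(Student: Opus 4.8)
The plan is to pull every object back to the tangent space $\tangent{m}$, where all three conjugations become the classical ones, and to reduce the claim to the standard Hilbert-space identity $f^* = f^{***}$. The structural facts that make this work are special to the Hadamard setting: the exponential map $\exponential{m}\colon\tangent{m}\to\cM$ is a global diffeomorphism with inverse $\logarithm{m}$, so that $\exponential{m} \logarithm{m} p = p$ for all $p$; moreover $\LocalExponentialPreimage{m}{\cM}=\tangent{m}$, and the parallel transport $\parallelTransport{m}{m}{\cdot}$ is the identity on $\cotangent{m}$. Accordingly I would set $f\colon\tangent{m}\to\eR$, $f(X)\coloneqq F(\exponential{m} X)$, and regard $\tangent{m}$ as a $d$-dimensional Hilbert space with dual $\cotangent{m}$ and pairing $\dual{\cdot}{\cdot}$. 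Comparing \cref{def:Fenchel_conjugate_on_M} with \cref{def:Classical_Fenchel_conjugate} then gives at once
\[
	F_m^*(\xi_m) = \sup_{X\in\tangent{m}}\paren[auto]\{\}{\dual{\xi_m}{X}-f(X)} = f^*(\xi_m),
\]
so the $m$-conjugate on $\cM$ is literally the classical conjugate of the pulled-back function $f$.

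Next I would identify the biconjugate and triconjugate with their classical counterparts. Substituting $\parallelTransport{m}{m}{\xi_m}=\xi_m$ into \eqref{eq:Manifold_Biconjugate_function_original} and writing an arbitrary point as $p=\exponential{m} X$ with $X=\logarithm{m} p$, the definition yields
\[
	F_{mm}^{**}(\exponential{m} X) = \sup_{\xi_m\in\cotangent{m}}\paren[auto]\{\}{\dual{\xi_m}{X}-f^*(\xi_m)} = f^{**}(X).
\]
Applying the $m$-conjugate once more as in \eqref{eq:triconjugate} and using $\LocalExponentialPreimage{m}{\cM}=\tangent{m}$, I obtain
\[
	F_{mmm}^{***}(\xi_m) = \sup_{X\in\tangent{m}}\paren[auto]\{\}{\dual{\xi_m}{X}-f^{**}(X)} = f^{***}(\xi_m),
\]
and the identical computation, now read as one conjugation of $F_m^*=f^*$ followed by another, shows $(F_m^*)^{**}=(f^*)^{**}=f^{***}$. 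Hence all three middle expressions in \eqref{eq:m_Triconjugate} equal $f^{***}$, and the whole proposition collapses to proving $f^{***}=f^*$ on $\tangent{m}$, which together with $F_m^*=f^*$ closes the chain.

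The only genuine work is this classical fact, and it is standard. The Fenchel--Young inequality (\cref{lemma:Classical_properties_Fenchel_conjugate}~\cref{item:Classical_properties_Fenchel-Young}) gives $g^{**}\le g$ for any $g$; taking $g=f^*$ yields $f^{***}=(f^*)^{**}\le f^*$. Conversely, $f^{**}\le f$ combined with the order-reversing property (\cref{lemma:Classical_properties_Fenchel_conjugate}~\cref{item:Classical_properties_inequality}) gives $f^{***}=(f^{**})^*\ge f^*$, so that $f^{***}=f^*$. Equivalently, since $f^*$ is convex and lsc by \cref{lemma:Classical_properties_Fenchel_conjugate}~\cref{item:Classical_properties_conjugate}, the Fenchel--Moreau Theorem (\cref{thm:Classical_Fenchel_Moreaou_theorem}) applied to $f^*$ gives $(f^*)^{**}=f^*$ directly; here one may assume $F$, and hence $f^*$, proper, the degenerate cases being immediate. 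I expect the main obstacle to be not analytical but organizational: one must check carefully that the base points all coincide so that $\parallelTransport{m}{m}{\cdot}$ and $\exponential{m} \logarithm{m}$ genuinely reduce to identities, and that the tangent/cotangent duality is tracked consistently, so that no geometric correction terms survive the reduction. Once that is verified, the statement is exactly the three-fold conjugation identity on a Hilbert space.
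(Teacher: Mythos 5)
Your proposal is correct and follows essentially the same route as the paper: both pull everything back to $\tangent{m}$ via $f_m = F \circ \exponential{m}$ (using that $\exponential{m}$ is a global diffeomorphism on a Hadamard manifold, so $\LocalExponentialPreimage{m}{\cM}=\tangent{m}$ and $\parallelTransport{m}{m}{\cdot}=\mathrm{id}$), identify $F_m^*$, $F_{mm}^{**}$, $F_{mmm}^{***}$ with the classical $f_m^*$, $f_m^{**}$, $f_m^{***}$, and conclude from the classical identity $f_m^{***}=f_m^*$. The only cosmetic difference is that the paper cites this last identity from Bauschke--Combettes (Prop.~13.14(iii)), whereas you reprove it via Fenchel--Young and the order-reversing property, with an appropriate remark on the degenerate (improper) cases.
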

\begin{proof}
	Using \cref{def:Classical_Fenchel_conjugate,def:Fenchel_conjugate_on_M,definition:biconjugate}, it is easy to see that
	\begin{equation*}
		(F_m^*)^* (\logarithm{m} p)
		=
		F_{mm}^{**}(p)
	\end{equation*}
	holds for all $p$ in $\cM$.
	Now \eqref{eq:triconjugate}, \cref{def:Fenchel_conjugate_on_M}, and the bijectivity of $\exponential{m}$ and $\logarithm{m}$ imply that
	\begin{align*}
		F_{mmm}^{***}(\xi_m)
		=
		(F_{mm}^{**})_{m}^*
		&
		=
		\sup_{X \in \tangent{m}} \paren[auto]\{\}{\dual{\xi_m}{X} - F_{mm}^{**}(\exponential{m} X)}
		\\
		&
		=
		\sup_{p \in \cM} \paren[auto]\{\}{\dual{\xi_m}{\logarithm{m} p} - F_{mm}^{**}(p)}
		\\
		&
		=
		\sup_{p \in \cM} \paren[auto]\{\}{\dual{\xi_m}{\logarithm{m} p} - (F_m^*)^*(\logarithm{m} p)}
	\end{align*}
	holds for all $\xi_m\in\cotangent{m}$.
	We now set $f_m \coloneqq F \circ \exponential{m}$ and use \cref{def:Classical_Fenchel_conjugate,def:Fenchel_conjugate_on_M} to infer that
	\begin{equation*}
		F_m^*(\xi_m)
		=
		\sup_{X \in \tangent{m}} \paren[auto]\{\}{\dual{\xi_m}{X} - F(\exponential{m} X)}
		=
		f_m^*(\xi_m)
	\end{equation*}
	holds for all $\xi_m\in\cotangent{m}$.
	Consequently, we obtain
	\begin{equation*}
		\begin{aligned}
			F_{mmm}^{***} (\xi_m)
			&
			=
			\sup_{p \in \cM} \paren[auto]\{\}{\dual{\xi_m}{\logarithm{m} p} - f_m^{**}(\logarithm{m} p)}
			\\
			&
			=
			\sup_{X \in \tangent{m}} \left\{ \dual{\xi_m}{X} - f_m^{**}(X)\right\}
			\\
			&
			=
			f_m^{***}(\xi_m)
			.
		\end{aligned}
	\end{equation*}
	According to \cite[Prop.~13.14~(iii)]{BauschkeCombettes:2011:1}, we have $f_m^{***} = f_m^*$.
	Collecting all equalities confirms \eqref{eq:m_Triconjugate}.
\end{proof}

The following is the analogue of \Cref{item:Classical_properties_Fenchel-Young} in \cref{lemma:Classical_properties_Fenchel_conjugate}.
\begin{proposition}[Fenchel--Young inequality]
	\label{prop:Manifold_Fenchel_Young_inequality}%
	Suppose that $\cC \subset \cM$ is strongly convex.
	Let $F\colon\cC \to\eR$ be proper and $m \in \cC$.
	Then
	\begin{equation}
		\label{eq:Manifold_Fenchel_Young_inequality}
		F(p) + F_m^*(\xi_m)
		\geq
		\dual{\xi_m}{\logarithm{m} p}
	\end{equation}
	holds for all~$p\in \cC$ and~$\xi_m\in\cotangent{m}$.
\end{proposition}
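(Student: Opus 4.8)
The plan is to derive the Fenchel--Young inequality directly from the definition of the $m$-conjugate, exactly as in the Euclidean case. The key observation is that $F_m^*$ is defined as a supremum over $X \in \LocalExponentialPreimage{m}{\cC}$ of the quantity $\dual{\xi_m}{X} - F(\exponential{m} X)$. Therefore, for \emph{any} single admissible choice of $X$, the value $\dual{\xi_m}{X} - F(\exponential{m} X)$ is a lower bound for the supremum $F_m^*(\xi_m)$.

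First, I would fix an arbitrary $p \in \cC$ and $\xi_m \in \cotangent{m}$. Since $\cC$ is strongly convex and $m, p \in \cC$, the logarithmic map furnishes a well-defined tangent vector $X \coloneqq \logarithm{m} p \in \LocalExponentialPreimage{m}{\cC}$ (this is precisely the bijection between $\cC$ and $\LocalExponentialPreimage{m}{\cC}$ noted after \cref{def:tangent_subset}), and it satisfies $\exponential{m} X = \exponential{m} \logarithm{m} p = p$. Plugging this particular $X$ into the supremum in \cref{def:Fenchel_conjugate_on_M} immediately yields
\begin{equation*}
	F_m^*(\xi_m)
	\geq
	\dual{\xi_m}{\logarithm{m} p} - F(\exponential{m} \logarithm{m} p)
	=
	\dual{\xi_m}{\logarithm{m} p} - F(p).
\end{equation*}
Rearranging this single inequality gives the desired claim $F(p) + F_m^*(\xi_m) \geq \dual{\xi_m}{\logarithm{m} p}$.

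The only subtlety worth flagging is the handling of the extended real line $\eR$, so that the rearrangement is legitimate and does not produce an indeterminate $\infty - \infty$. Since $F$ is assumed proper, we have $F(p) > -\infty$ for all $p \in \cC$; if $F(p) = +\infty$ the asserted inequality holds trivially because the left-hand side is $+\infty$, and otherwise $F(p) \in \R$ and the rearrangement of the displayed bound is purely arithmetic. I do not expect any genuine obstacle here: the argument is essentially a one-line consequence of the supremum definition, and the role of strong convexity is simply to guarantee that $\logarithm{m} p$ exists and lies in $\LocalExponentialPreimage{m}{\cC}$, making it an admissible competitor in the supremum.
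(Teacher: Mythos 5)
Your proposal is correct and follows essentially the same route as the paper: both proofs simply note that $X \coloneqq \logarithm{m} p$ is an admissible competitor in the supremum defining $F_m^*(\xi_m)$, which gives $F_m^*(\xi_m) \geq \dual{\xi_m}{\logarithm{m} p} - F(p)$, and both treat the case $F(p) = +\infty$ separately. The only cosmetic difference is that where you assert the left-hand side is $+\infty$ in that case, the paper justifies this explicitly by observing that properness of $F$ forces $F_m^*$ to be nowhere $-\infty$, so no indeterminate $\infty - \infty$ can arise; your argument implicitly needs this same observation.
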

\begin{proof}
	If $F(p) = \infty$ the inequality trivially holds, since $F$ is proper and hence $F^*$ is nowhere $-\infty$.
	It remains to consider $F(p) <\infty$.
	Suppose that $\xi_m\in \cotangent{m}$, $p \in \cC$ and set $X \coloneqq \logarithm{m} p$.
	From~\cref{def:Fenchel_conjugate_on_M} we obtain
	\begin{equation*}
		F_m^*(\xi_m)
		\geq
		\dual{\xi_m}{\logarithm{m} p}
		-
		F \paren[big](){\exponential{m} \logarithm{m} p},
	\end{equation*}
	which is equivalent to~\eqref{eq:Manifold_Fenchel_Young_inequality}.
\end{proof}

We continue by introducing the manifold counterpart of the Fenchel--Moreau Theorem, compare \cref{thm:Classical_Fenchel_Moreaou_theorem}.
Given a set $\cC \subset \cM$, $m \in \cC$ and a function $F \colon \cC \to \eR$, we define $f_m\colon \tangent{m} \to \eR$ by
\begin{equation}\label{eqref:fcomp1}
	f_m(X) =
	\begin{cases}
		F(\exponential{m} X), & X \in \LocalExponentialPreimage{m}{\cC},
		\\
		\quad +\infty, & X \notin \LocalExponentialPreimage{m}{\cC}.
	\end{cases}
\end{equation}
Throughout this section, the convexity of the function $f_m \colon \tangent{m} \to \eR$ is the usual convexity on the vector space $\tangent{m}$, \ie, for all $X,Y\in \tangent{m}$ and $\lambda\in [0,1]$ it holds
\begin{equation} \label{eq:con.com.fun}
	f_m\paren[big](){(1-\lambda)X + \lambda Y} \leq (1-\lambda)f_m(X) + \lambda f_m(Y).
\end{equation}
We present two examples of functions~$F \colon \cM \to \R$ defined on Hadamard manifolds such that $f_m$ is convex.
In the first example, $F$ depends on an arbitrary fixed point $m'\in \cM$. 
In this case, we can guarantee that $f_m$ is convex only when $m = m'$. 
In the second example, $F$ is defined on a particular Hadamard manifold and $f_m$ is convex for any base point $m \in \cM$.
It is worth emphasizing that the functions in the following examples are geodesically convex as well but in general, the convexity of $F$ and $f_m$ are unrelated and all four cases can occur.

\begin{example}
	\label{example:convexity_on_the_tangent_space}
	Let $\cM$ be any Hadamard manifold and $m'\in \cM$ arbitrary.
	Consider the function $f_{m'}$ defined in \eqref{eqref:fcomp1} with
	$F\colon{\cM} \to \R$ given by $F(p) = \dist{m'}{p}$ for all $p\in\cM$.
	Note that
	\begin{equation*}
		f_{m'}(X)
		=
		F(\exponential{m'} X)
		=
		\dist[big]{m'}{\exponential{m'} X}
		=
		\riemanniannorm{X}[m'] 
		\quad 
		\text{for all } X \in \tangent{m'}.
	\end{equation*}
	Hence, it is easy to see that $f_{m'}$ satisfies \eqref{eq:con.com.fun} and, consequently, it is convex on $\tangent{m'}$.
\end{example}

Our second example is slightly more involved.
A problem involving the special case $a = 0$ and $b = 1$ appears in the dextrous hand grasping problem in \cite[Sect.~3.4]{DirrHelmkeLageman:2007:1}.
\begin{example} \label{ex:psdms}
	Denote by $\cP(n)$ the set of symmetric matrices of size $n\times n$ for some $n \in \N$, and by $\cM = \cP_+(n)$ the cone of symmetric positive definite matrices.
	The latter is endowed with the affine invariant Riemannian metric, given by
	\begin{equation}\label{eq:metric}
		\riemannian{X}{Y}[p] 
		\coloneqq 
		\trace (Xp^{-1}Yp^{-1})
		\quad
		\text{for }
		p\in \cM
		\text{ and }
		X,Y\in \tangent{p}[\cM]
		.
	\end{equation}
	The tangent space $\tangent{p}[\cM]$ can be identified with $\cP(n)$.
	$\cM$ is a Hadamard manifold,  see  for example \cite[Thm.~1.2, p.~325]{Lang:1999:1}.
	The exponential map $\exponential{p} \colon \tangent{p}[\cM] \to \cM$ is given by
	\begin{equation} \label{eq:GoedSPD}
		\exponential{p}{X}
		=
		p^{1/2} \e^{(p^{-1/2}Xp^{-1/2})} p^{1/2}
		\quad
		\text{for } 
		(p,X)\in \tangentBundle[\cM]
		.
	\end{equation}
	Consider the function $F \colon \cM \to \R$, defined by
	\begin{equation}  \label{eq:example.referee}
		F(p)
		= 
		a \ln^2(\det p ) - b\ln(\det p),
	\end{equation}
	where $a \ge 0$ and $b \in \R$ are constants. 
	Using \eqref{eq:GoedSPD} and properties of $\det \colon \cP(n) \to \R$, we have
	\begin{align*}
		\det(\exponential{m}{X})
		&
		= 
		\det \e^{(m^{-1/2}Xm^{-1/2})} \det m
		\\
		&
		= 
		\e^{\trace(m^{-1/2}Xm^{-1/2})} \det m 
		= 
		\e^{\trace(m^{-1}X)} \det m
		,
	\end{align*}
	for any $m\in \cM$. 
	Hence, considering $f_m(X) = F(\exponential{m} X)$, we obtain
	\begin{multline*}
		f_m(X)
		=  
		a \trace^2(m^{-1}X ) + 2 a \trace(m^{-1}X) \ln( \det m) +  a \ln^2( \det m) 
		\\
		- b \trace (m^{-1}X) - b \ln(\det m)
		,
	\end{multline*}
	for any $m\in \cM$. 
	The Euclidean gradient and Hessian of $f_m$ are given by
	\begin{align*}
		f_m'(X) 
		&
		= 
		2a \trace(m^{-1}X) m^{-1} + 2a\ln(\det m) m^{-1} - bm^{-1}
		,  
		\\
		f_m''(X)(Y,\cdot)
		&
		= 
		2a \trace(m^{-1}Y ) m^{-1}
		,
	\end{align*}
	respectively, for all $X,Y \in \cP(n)$. 
	Hence $f_m''(X)(Y,Y) = 2a \trace^2(m^{-1}Y) \ge 0$ holds.
	Thus, the function $f_m$ is convex for any $m \in \cM$.
	From \cite[Ex.~4.4]{FerreiraLouzeiroPrudente:2019:1} we can conclude that \eqref{eq:example.referee} is  also geodesically convex.
\end{example}

Since $\paren[big](){\tangent{m},\riemannian{\cdot}{\cdot}[m]}$ is a Hilbert space, the function $f_m$ defined in \eqref{eqref:fcomp1} establishes a relationship between the results of this section and the results of \cref{subsec:Convex_Analysis}.
We will exploit this relationship in the demonstration of the following results.
\begin{lemma}\label{lem:pro.Fm}
	Suppose that $\cC \subset \cM$ is strongly convex and $m \in \cC$.
	Suppose that $F \colon \cC \to \eR$.
	Then the following statements hold:
	\begin{enumerate}
		\item
			\label{lem:pro.Fm:proper}
			$F$ is proper if and only if $f_m$ is proper.
		\item
			\label{lem:pro.Fm:eq}
			$F_m^*(\xi) = f_m^*(\xi)$ for all $\xi \in \cotangent{m}$.
		\item
			\label{lem:pro.Fm:convex_lsc}
			The function $F_m^*$ is convex and lsc on $\cotangent{m}$.
		\item
			\label{lem:pro.Fm:biconj}
			$F_{m m}^{**}(p) = f_m^{**}(\logarithm{m} p)$ for all $p \in \cC$.
	\end{enumerate}
\end{lemma}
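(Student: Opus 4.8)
The plan is to exploit the fact, already noted just above the statement, that $\paren[big](){\tangent{m},\riemannian{\cdot}{\cdot}[m]}$ is a Hilbert space with dual $\cotangent{m}$, so that $f_m$ and its classical conjugates are governed entirely by the results of \cref{subsec:Convex_Analysis}. Two elementary observations drive the whole argument. First, since $\cC$ is strongly convex, the exponential map $\exponential{m}$ restricts to a bijection between $\LocalExponentialPreimage{m}{\cC}$ and $\cC$ with inverse $\logarithm{m}$, as noted after \cref{def:tangent_subset}. Second, by the construction \eqref{eqref:fcomp1} we have $f_m \equiv +\infty$ on the complement of $\LocalExponentialPreimage{m}{\cC}$, so those points never contribute to any supremum defining a conjugate.

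For \cref{lem:pro.Fm:proper} I would transport properness through this bijection: a point $p\in\cC$ with $F(p)<\infty$ corresponds to $X=\logarithm{m} p$ with $f_m(X)=F(p)<\infty$ and conversely, so $\dom F\neq\emptyset$ is equivalent to $\dom f_m\neq\emptyset$; likewise $F>-\infty$ on $\cC$ is equivalent to $f_m>-\infty$ on $\tangent{m}$, the added value $+\infty$ outside $\LocalExponentialPreimage{m}{\cC}$ being harmless. For \cref{lem:pro.Fm:eq} I would write out the classical conjugate $f_m^*(\xi)=\sup_{X\in\tangent{m}}\{\dual{\xi}{X}-f_m(X)\}$ from \cref{def:Classical_Fenchel_conjugate} and discard the points $X\notin\LocalExponentialPreimage{m}{\cC}$, where the bracket equals $-\infty$; the remaining supremum over $\LocalExponentialPreimage{m}{\cC}$ of $\dual{\xi}{X}-F(\exponential{m} X)$ is precisely $F_m^*(\xi)$ from \cref{def:Fenchel_conjugate_on_M}.

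Parts \cref{lem:pro.Fm:convex_lsc} and \cref{lem:pro.Fm:biconj} then follow by transport of the classical theory. For convexity and lower semicontinuity I would invoke \cref{lem:pro.Fm:eq} to replace $F_m^*$ by $f_m^*$, which is a pointwise supremum of the affine functions $\xi\mapsto\dual{\xi}{X}-f_m(X)$ and is therefore convex and lsc. For the biconjugate identity I would specialize \cref{definition:biconjugate} to $m'=m$ and use $\parallelTransport{m}{m}{\xi_m}=\xi_m$, since parallel transport along the trivial geodesic is the identity; this gives $F_{mm}^{**}(p)=\sup_{\xi_m\in\cotangent{m}}\{\dual{\xi_m}{\logarithm{m} p}-F_m^*(\xi_m)\}$. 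Substituting $F_m^*=f_m^*$ from \cref{lem:pro.Fm:eq} and comparing with the classical biconjugate \eqref{eq:Classical_biconjugate_function} evaluated at $X=\logarithm{m} p$ yields exactly $f_m^{**}(\logarithm{m} p)$.

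I do not expect a genuine obstacle; the entire content is the reduction to the Hilbert space $\tangent{m}$, enabled by the bijection above and the convention $f_m=+\infty$ off $\LocalExponentialPreimage{m}{\cC}$. The only point requiring mild care is that $F_m^*$ is convex and lsc even when $F$, and hence $f_m$, fails to be proper, which is why in \cref{lem:pro.Fm:convex_lsc} I would argue directly via the supremum of affine functions rather than cite \cref{lemma:Classical_properties_Fenchel_conjugate}~\cref{item:Classical_properties_conjugate}, whose hypothesis presupposes properness.
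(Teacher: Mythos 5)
Your proposal is correct and takes essentially the same route as the paper: both reduce everything to the Hilbert space $\tangent{m}$ via the bijectivity of $\exponential{m} \colon \LocalExponentialPreimage{m}{\cC} \to \cC$ and the convention $f_m \equiv +\infty$ outside $\LocalExponentialPreimage{m}{\cC}$, prove part~(ii) by discarding the non-contributing points of the supremum, and then obtain parts~(iii) and~(iv) by transporting the classical results, with~(iv) using exactly your specialization $m'=m$ of the biconjugate definition. The only cosmetic difference is in part~(iii), where the paper cites \cite[Prop.~13.11]{BauschkeCombettes:2011:1} together with part~(ii) rather than spelling out the supremum-of-affine-functions argument; your caution about properness is warranted and is precisely why that citation (which does not presuppose properness), rather than \cref{lemma:Classical_properties_Fenchel_conjugate}, is the right one.
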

\begin{proof}
	Since $\cC\subset\cM$ is strongly convex, \cref{lem:pro.Fm:proper} follows directly from \eqref{eqref:fcomp1} and the fact that the map $\exponential{m} \colon \LocalExponentialPreimage{m}{\cC} \to \cC$ is bijective.
	As for \cref{lem:pro.Fm:eq}, \cref{def:Fenchel_conjugate_on_M} and the definition of $f_m$ in \eqref{eqref:fcomp1} imply
	\begin{align*}
		F_m^*(\xi)
		&
		=
		\sup_{X \in \LocalExponentialPreimage{m}{\cC}} \paren[auto]\{\}{\dual{\xi}{X} - F(\exponential{m} X)}
		=
		- \inf_{X \in \LocalExponentialPreimage{m}{\cC}} \paren[auto]\{\} {F(\exponential{m} X) - \dual{\xi}{X}}
		\\
		&
		=
		-\inf_{X \in \tangent{m}} \paren[auto]\{\}{f_m(X) - \dual{\xi}{X}}
		=
		\sup_{X \in \tangent{m} } \paren[auto]\{\}{\dual{\xi}{X} - f_m(X)}
		=
		f_m^*(\xi)
	\end{align*}
	for all $\xi \in \cotangent{m}$.
	\cref{lem:pro.Fm:convex_lsc} follows immediately from \cite[Prop.~13.11]{BauschkeCombettes:2011:1} and \cref{lem:pro.Fm:eq}.
	For \cref{lem:pro.Fm:biconj}, take $p \in \cC$ arbitrary.
	Using \cref{definition:biconjugate} and \cref{lem:pro.Fm:eq} we have
	\begin{align*}
		F_{m m}^{**}(p)
		&
		=
		\sup_{\xi\in \cotangent{m}} \paren[auto]\{\}{\dual{\xi}{\log_mp} -F_m^*(\xi)}
		\\
		&
		=
		\sup_{\xi\in \cotangent{m}} \paren[auto]\{\}{\dual{\xi} {\log_mp} -f_m^*(\xi)}
		=
		f_m^{**}(\xi),
	\end{align*}
	which concludes the proof.
\end{proof}
In the following theorem we obtain a version of the Fenchel--Moreau \cref{thm:Classical_Fenchel_Moreaou_theorem} for functions defined on Riemannian manifolds.
To this end, it is worth noting that if $\cC$ is strongly convex then
\begin{equation}\label{eq:pre.mai.theo}
	F(p)
	=
	f_m(\log_mp)
	\quad \text{ for all } p \in \cC.
\end{equation}
Equality \eqref{eq:pre.mai.theo} is an immediate consequence of \eqref{eqref:fcomp1}, and will be used in the proof of the following two theorems.

\begin{theorem}\label{theo:fen.mor.man}
	Suppose that $\cC \subset \cM$ is strongly convex and $m \in \cC$.
	Let $F \colon \cC \to \eR$ be proper.
	If $f_m$ is lsc and convex on $\tangent{m}$, then $F = F_{m m}^{**}$.
	In this case $F^*_m$ is proper as well.
\end{theorem}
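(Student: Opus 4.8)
The plan is to reduce the Riemannian statement entirely to the classical Fenchel--Moreau Theorem (\cref{thm:Classical_Fenchel_Moreaou_theorem}) applied on the Hilbert space $\tangent{m}$, using the bridge function $f_m$ and \cref{lem:pro.Fm}. The key observation is that all the necessary translation work has already been done: \cref{lem:pro.Fm:biconj} tells us that $F_{mm}^{**}(p) = f_m^{**}(\logarithm{m} p)$ for all $p \in \cC$, and equation \eqref{eq:pre.mai.theo} tells us that $F(p) = f_m(\logarithm{m} p)$ for all $p \in \cC$. So the entire theorem hinges on comparing $f_m$ with $f_m^{**}$ on the relevant set of tangent vectors.

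First I would note that by hypothesis $F$ is proper, so by \cref{lem:pro.Fm:proper} the function $f_m$ is proper as well, and by assumption $f_m$ is lsc and convex on the Hilbert space $\paren[big](){\tangent{m},\riemannian{\cdot}{\cdot}[m]}$. This puts us exactly in the situation of the classical Fenchel--Moreau Theorem, which yields $f_m^{**}(X) = f_m(X)$ for all $X \in \tangent{m}$, and moreover guarantees that $f_m^*$ is proper. Then for any $p \in \cC$ I would simply chain the identities:
\begin{equation*}
	F_{mm}^{**}(p)
	=
	f_m^{**}(\logarithm{m} p)
	=
	f_m(\logarithm{m} p)
	=
	F(p),
\end{equation*}
where the outer equalities come from \cref{lem:pro.Fm:biconj} and \eqref{eq:pre.mai.theo}, and the middle one is the classical theorem. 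Since $p \in \cC$ was arbitrary, this establishes $F = F_{mm}^{**}$ on $\cC$.

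For the final sentence, the properness of $F_m^*$ follows from \cref{lem:pro.Fm:eq}, which states $F_m^*(\xi) = f_m^*(\xi)$ for all $\xi \in \cotangent{m}$, combined with the fact (just noted) that the classical Fenchel--Moreau Theorem guarantees $f_m^*$ is proper. Thus $F_m^*$ inherits properness directly.

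I do not anticipate a genuine obstacle here, since the difficulty has been front-loaded into \cref{lem:pro.Fm} and the careful setup of $f_m$ in \eqref{eqref:fcomp1}. The one point requiring mild care is ensuring that the strong convexity of $\cC$ is actually used where needed: it guarantees that $\logarithm{m}$ is a well-defined bijection onto $\LocalExponentialPreimage{m}{\cC}$, which is precisely what makes identities \eqref{eq:pre.mai.theo} and \cref{lem:pro.Fm:biconj} valid pointwise on $\cC$. The extension of $f_m$ by $+\infty$ outside $\LocalExponentialPreimage{m}{\cC}$ in \eqref{eqref:fcomp1} is what allows the classical theorem to be invoked on all of $\tangent{m}$ without worrying about the domain; so the main thing to verify is merely that the convexity and lower semicontinuity hypotheses on $f_m$ are stated for this extended function, which they are.
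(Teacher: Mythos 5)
Your proposal is correct and follows essentially the same route as the paper's own proof: reduce to the classical Fenchel--Moreau Theorem on the Hilbert space $\tangent{m}$ via properness from \cref{lem:pro.Fm}~\cref{lem:pro.Fm:proper}, chain $F_{mm}^{**}(p) = f_m^{**}(\logarithm{m} p) = f_m(\logarithm{m} p) = F(p)$ using \cref{lem:pro.Fm}~\cref{lem:pro.Fm:biconj} and \eqref{eq:pre.mai.theo}, and deduce properness of $F_m^*$ from that of $f_m^*$ via \cref{lem:pro.Fm}~\cref{lem:pro.Fm:eq}. No gaps.
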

\begin{proof}
	First note that due to \cref{lem:pro.Fm}~\cref{lem:pro.Fm:proper}, the function $f_m$ is also proper.
	Taking into account \cref{thm:Classical_Fenchel_Moreaou_theorem}, it follows that $f_m = f_m^{**}$.
	Thus, considering \eqref{eq:pre.mai.theo}, we have $F(p) = f_m^{**}(\log_mp) $ for all $p \in \cC$.
	Using \cref{lem:pro.Fm}~\cref{lem:pro.Fm:biconj} we can conclude that $F = F_{m m}^{**}$.
	Furthermore by \cref{lem:pro.Fm}~\cref{lem:pro.Fm:proper}, $f_m$ is proper.
	Hence by \cref{thm:Classical_Fenchel_Moreaou_theorem}, we obtain that $f_m^*$ is proper and by \cref{lem:pro.Fm}~\cref{lem:pro.Fm:eq}, $F_m^*$ is proper as well.
\end{proof}

\begin{theorem}\label{theo:fen.mor.man2}
	Suppose that $\cM$ is a Hadamard manifold and $m \in \cM$.
	Suppose that $F \colon \cM \to \eR$ is a proper function.
	Then $f_m$ is lsc and convex on $\tangent{m}$ if and only if $F=F_{m m}^{**}$.

	In this case $F_m^*$ is proper as well.
\end{theorem}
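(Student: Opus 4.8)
The plan is to establish the two implications separately, leveraging the correspondence between $F$ and its tangent-space representative $f_m$ provided by \cref{lem:pro.Fm}, together with the crucial fact that on a Hadamard manifold $\cM$ is itself strongly convex with $\LocalExponentialPreimage{m}{\cM} = \tangent{m}$, so that $\logarithm{m} \colon \cM \to \tangent{m}$ is a global bijection.

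For the forward implication I would simply invoke \cref{theo:fen.mor.man}. Since $\cM$ is strongly convex on a Hadamard manifold, taking $\cC = \cM$ there yields: if $f_m$ is lsc and convex on $\tangent{m}$, then $F = F_{m m}^{**}$, and moreover $F_m^*$ is proper. This direction therefore requires no new argument.

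For the reverse implication, suppose $F = F_{m m}^{**}$. By \eqref{eq:pre.mai.theo} we have $F(p) = f_m(\logarithm{m} p)$, and by \cref{lem:pro.Fm}~\cref{lem:pro.Fm:biconj} we have $F_{m m}^{**}(p) = f_m^{**}(\logarithm{m} p)$, for every $p \in \cM$. Combining these with the hypothesis gives $f_m(\logarithm{m} p) = f_m^{**}(\logarithm{m} p)$ for all $p \in \cM$. Because $\cM$ is a Hadamard manifold, $\logarithm{m}$ maps $\cM$ bijectively onto $\tangent{m}$, so as $p$ ranges over $\cM$ the argument $X = \logarithm{m} p$ ranges over all of $\tangent{m}$; hence $f_m = f_m^{**}$ on $\tangent{m}$. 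Since $F$ is proper, \cref{lem:pro.Fm}~\cref{lem:pro.Fm:proper} shows $f_m$ is proper, so the classical Fenchel--Moreau \cref{thm:Classical_Fenchel_Moreaou_theorem} applies and the equality $f_m = f_m^{**}$ forces $f_m$ to be lsc and convex, as desired.

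Finally, for the properness addendum, once $f_m$ is known to be lsc, convex, and proper, \cref{thm:Classical_Fenchel_Moreaou_theorem} guarantees that $f_m^*$ is proper, whence $F_m^* = f_m^*$ is proper by \cref{lem:pro.Fm}~\cref{lem:pro.Fm:eq}. I do not expect a genuine obstacle here: the entire argument is a transfer of the classical biconjugation theorem through the dictionary of \cref{lem:pro.Fm}. The only delicate point — and the reason the statement is confined to Hadamard manifolds rather than to a general strongly convex $\cC$ — is that the reverse implication needs $\logarithm{m}$ to be surjective onto the whole tangent space $\tangent{m}$; otherwise the equality $f_m = f_m^{**}$ could only be deduced on the image $\LocalExponentialPreimage{m}{\cC}$, which is insufficient to invoke \cref{thm:Classical_Fenchel_Moreaou_theorem} globally.
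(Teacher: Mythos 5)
Your proposal is correct and follows essentially the same route as the paper's proof: both rest on the dictionary of \cref{lem:pro.Fm}, the identity \eqref{eq:pre.mai.theo}, the classical Fenchel--Moreau \cref{thm:Classical_Fenchel_Moreaou_theorem}, and the global bijectivity of $\logarithm{m}$ on a Hadamard manifold. The only cosmetic difference is that the paper writes the argument as a single chain of equivalences ($f_m$ lsc and convex $\Leftrightarrow f_m = f_m^{**} \Leftrightarrow F = F_{mm}^{**}$) rather than splitting into two implications with the forward one delegated to \cref{theo:fen.mor.man}; your explicit remark on why surjectivity of $\logarithm{m}$ is needed is exactly the point the paper compresses into the phrase \emph{both with $\cC = \cM$}.
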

\begin{proof}
	Observe that due to \cref{lem:pro.Fm}~\cref{lem:pro.Fm:proper}, the function $f_m$ is proper.
	Taking into account \cref{thm:Classical_Fenchel_Moreaou_theorem},
	it follows that $f_m$ is lsc and convex on $\tangent{m}$ if and only if $f_m = f_m^{**}$.
	Considering \eqref{eq:pre.mai.theo} and \cref{lem:pro.Fm}~\cref{lem:pro.Fm:biconj}, both with $\cC={\cM}$, we can say that $f_m = f_m^{**}$ is equivalent to $F = F_{m m}^{**}$.
	Properness of $F_m^*$ follows by the same arguments as in \cref{theo:fen.mor.man}.
	This completes the proof.
\end{proof}

We now address the manifold counterpart of \cref{thm:Classical_characterization_subdifferential}, whose proof is a minor extension compared to the proof for \cref{thm:Classical_characterization_subdifferential} and therefore omitted.
\begin{theorem}
	\label{thm:Manifold_Characterization_of_the_subdifferential}%
	Suppose that $\cC \subset \cM$ is strongly convex and $m, p \in \cC$.
	Let $F \colon \cC \to \eR$ be a proper function.
	Suppose that $f_m$ defined in \eqref{eqref:fcomp1} is convex on $\tangent{m}$.
	Then $\parallelTransport{p}{m}{\xi_p} \in \partial f_m(\logarithm{m} p)$ if and only if
	\begin{equation}
		\label{eq:RHS_equivalence_being_in_the_subdifferential}
		f_m(\logarithm{m} p) + f_m^*(\parallelTransport{p}{m}{\xi_p})
		=
		\dual[auto]{\parallelTransport{p}{m}{\xi_p}}{\logarithm{m} p}.
	\end{equation}
\end{theorem}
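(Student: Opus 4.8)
The plan is to observe that $\paren[big](){\tangent{m}, \riemannian{\cdot}{\cdot}[m]}$ is a Hilbert space with dual $\cotangent{m}$, and that $f_m$ from \eqref{eqref:fcomp1} is a genuine extended-real-valued function on this space; the assertion is then nothing but the classical characterization in \cref{thm:Classical_characterization_subdifferential} applied to $f_m$. The entire argument is therefore a translation of the manifold objects into the tangent space at $m$, followed by an invocation of the Hilbert-space result, which is exactly why the proof is a minor extension of the one for \cref{thm:Classical_characterization_subdifferential}.

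Concretely, I would abbreviate $X \coloneqq \logarithm{m} p \in \tangent{m}$ and $\eta \coloneqq \parallelTransport{p}{m}{\xi_p} \in \cotangent{m}$. With these abbreviations the left-hand condition reads $\eta \in \partial f_m(X)$ and the right-hand equality \eqref{eq:RHS_equivalence_being_in_the_subdifferential} reads $f_m(X) + f_m^*(\eta) = \dual{\eta}{X}$, where $\partial f_m(X)$ is the classical subdifferential of \cref{def:Classiscal_convex_subdifferential} and $f_m^*$ is the classical Fenchel conjugate of \cref{def:Classical_Fenchel_conjugate}, both formed on the Hilbert space $\tangent{m}$ with dual $\cotangent{m}$. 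Since $\eta$ is simply a fixed element of $\cotangent{m}$, no special structure of the parallel transport is needed beyond the fact that it produces such a covector.

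Next I would verify the only hypothesis required by \cref{thm:Classical_characterization_subdifferential}, namely properness of $f_m$. Because $F$ is assumed proper, \cref{lem:pro.Fm}~\cref{lem:pro.Fm:proper} yields that $f_m$ is proper as well. Applying \cref{thm:Classical_characterization_subdifferential} to $f_m$ at the point $X$ with dual element $\eta$ then gives the equivalence $\eta \in \partial f_m(X) \iff f_m(X) + f_m^*(\eta) = \dual{\eta}{X}$, which is precisely \eqref{eq:RHS_equivalence_being_in_the_subdifferential} after reinserting $X = \logarithm{m} p$ and $\eta = \parallelTransport{p}{m}{\xi_p}$.

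There is no genuine obstacle here; the extension over \cref{thm:Classical_characterization_subdifferential} is purely one of bookkeeping. The two points meriting a moment's care are, first, that $f_m$ is defined on the entire space $\tangent{m}$ by assigning the value $+\infty$ outside $\LocalExponentialPreimage{m}{\cC}$, so that the supremum defining $f_m^*$ is correctly taken over all of $\tangent{m}$ and the classical theorem applies verbatim; and second, that the convexity of $f_m$ assumed in the statement is in fact not needed for the equivalence itself — \cref{thm:Classical_characterization_subdifferential} requires only properness — but it is what ensures that $\partial f_m$ is the meaningful convex subdifferential rather than possibly empty.
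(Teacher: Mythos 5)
Your proposal is correct and matches what the paper intends: the paper omits this proof precisely because it is a ``minor extension'' of \cref{thm:Classical_characterization_subdifferential}, and your translation $X = \logarithm{m} p$, $\eta = \parallelTransport{p}{m}{\xi_p}$ followed by an application of the classical result on the Hilbert space $\tangent{m}$ (with properness of $f_m$ supplied by \cref{lem:pro.Fm}) is exactly that extension. Your side remark is also accurate: the classical characterization needs only properness, so the convexity hypothesis on $f_m$ is not required for the equivalence itself.
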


Given $F \colon \cC \to \eR$ and $m\in\cC$, we can state the subdifferential from \cref{def:Subdifferential} for the Fenchel $m$-conjugate function $F_m^* \colon \cotangent{m} \to \eR$.
Note that $F_m^*$ is convex by \cref{lem:pro.Fm}~\cref{lem:pro.Fm:convex_lsc} and defined on the cotangent space $\cotangent{m}$, so the following
equation is a classical subdifferential written in terms of tangent vectors, since the dual space of $\cotangent{m}$ can be canonically identified with $\tangent{m}$. The subdifferential definition reads as follows:
\makeatletter
\IfSubStr{\@currentclass}{svjour3.cls}{%
	\begin{multline*}
		\partial F_m^*(\xi_m)
		\coloneqq
		\\
		\setDef[auto]{X \in \tangent{m}}{F_m^*(\eta_m) \geq F_m^*(\xi_m) + \dual{X}{\eta_m-\xi_m} \text{ for all } \eta_m\in \cotangent{m}}
		.
	\end{multline*}
	}{%
	\begin{equation*}
		\partial F_m^*(\xi_m)
		\coloneqq
		\setDef[auto]{X \in \tangent{m}}{F_m^*(\eta_m) \geq F_m^*(\xi_m) + \dual{X}{\eta_m-\xi_m} \text{ for all } \eta_m\in \cotangent{m}}
		.
	\end{equation*}
}
\makeatother
Before providing the manifold counterpart of \cref{cor:Classical_symmetry_subdifferential}, let us show how~\cref{thm:Manifold_Characterization_of_the_subdifferential} reads for~$F_m^*$.

\begin{corollary}
	\label{cor:Manifold_characterization_conjugate_function}%
	Suppose that $\cC \subset \cM$ is strongly convex and $m, p \in \cC$.
	Let $F\colon \cC\to\eR$ be a proper function and let $f_m$ be the function defined in \eqref{eqref:fcomp1}.
	Then
	\begin{equation}
		\label{eq:Manifold_characterization_conjugate_function}
		\logarithm{m} p \in \partial F_m^* (\xi_m)
		\quad \Leftrightarrow \quad
		F_m^*(\xi_m) + f_m(\logarithm{m} p)
		=
		\dual{\xi_m}{\logarithm{m} p}
	\end{equation}
	holds for all $\xi_m \in \cotangent{m}$.
\end{corollary}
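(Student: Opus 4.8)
The plan is to transfer the entire statement into the Hilbert space $\cotangent{m}$ and apply the classical theory verbatim, exploiting the identity $F_m^* = f_m^*$ from \cref{lem:pro.Fm}~\cref{lem:pro.Fm:eq}. First I would record that, by \cref{lem:pro.Fm}~\cref{lem:pro.Fm:convex_lsc}, the function $F_m^* = f_m^*$ is convex and lsc on $\cotangent{m}$, so the set $\partial F_m^*(\xi_m)$ appearing in the statement is exactly the classical convex subdifferential of $f_m^*$ at $\xi_m$, where the bidual of $\cotangent{m}$ is canonically identified with $\tangent{m}$. Under this identification the candidate subgradient is the tangent vector $\logarithm{m} p \in \tangent{m}$, and $\dual{\xi_m}{\logarithm{m} p}$ is the canonical pairing between $\cotangent{m}$ and $\tangent{m}$.

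With this dictionary in place I would run the classical symmetry argument. Applying \cref{cor:Classical_symmetry_subdifferential} to $f_m$ on the Hilbert space $\tangent{m}$ (whose conjugate is $f_m^* = F_m^*$) yields
\begin{equation*}
	\logarithm{m} p \in \partial F_m^*(\xi_m)
	\quad\Longleftrightarrow\quad
	\xi_m \in \partial f_m(\logarithm{m} p).
\end{equation*}
It then remains to rewrite the right-hand membership as the announced equality, which is precisely \cref{thm:Manifold_Characterization_of_the_subdifferential} (equivalently, \cref{thm:Classical_characterization_subdifferential} applied to $f_m$): $\xi_m \in \partial f_m(\logarithm{m} p)$ holds if and only if $f_m(\logarithm{m} p) + f_m^*(\xi_m) = \dual{\xi_m}{\logarithm{m} p}$. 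Substituting $f_m^* = F_m^*$ gives exactly \eqref{eq:Manifold_characterization_conjugate_function}; reading $f_m(\logarithm{m} p) = F(p)$ through \eqref{eq:pre.mai.theo} moreover shows that this identity is just the equality case of the Fenchel--Young inequality from \cref{prop:Manifold_Fenchel_Young_inequality}.

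Alternatively, and without routing through $f_m$, I could apply \cref{thm:Classical_characterization_subdifferential} directly to $F_m^* = f_m^*$: for $\logarithm{m} p \in \tangent{m}$ this gives $\logarithm{m} p \in \partial F_m^*(\xi_m)$ iff $F_m^*(\xi_m) + (f_m^*)^*(\logarithm{m} p) = \dual{\xi_m}{\logarithm{m} p}$, with $(f_m^*)^* = f_m^{**}$. The only genuine point is therefore the replacement of $f_m^{**}$ by $f_m$, and this is the step I expect to be the crux: it requires $f_m = f_m^{**}$, which by Fenchel--Moreau (\cref{thm:Classical_Fenchel_Moreaou_theorem}, equivalently \cref{theo:fen.mor.man} together with \eqref{eq:pre.mai.theo}) is equivalent to $f_m$ being convex and lsc on $\tangent{m}$, the standing hypothesis carried over from \cref{thm:Manifold_Characterization_of_the_subdifferential}. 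It is worth noting that without convexity the forward implication still survives, using $f_m^{**} \le f_m$ together with the Fenchel--Young inequality, whereas the reverse implication can fail precisely when $\logarithm{m} p \in \partial F_m^*(\xi_m)$ at a point where $f_m > f_m^{**}$. A final routine bookkeeping step would handle the degenerate values $F(p) = +\infty$ or $F_m^*(\xi_m) = +\infty$, exactly as in the proof of \cref{prop:Manifold_Fenchel_Young_inequality}.
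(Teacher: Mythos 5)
Your main argument is correct and, in substance, is the paper's own (one-sentence) proof: the paper simply views $F_m^* = f_m^*$ (\cref{lem:pro.Fm}~\cref{lem:pro.Fm:eq}) as a convex, lsc function on the vector space $\cotangent{m}$ and invokes the classical Hilbert space theory, which is your ``alternative'' route; your primary route through \cref{cor:Classical_symmetry_subdifferential} and \cref{thm:Classical_characterization_subdifferential} applied to $f_m$ on $\tangent{m}$ is an equivalent minor variant. You also correctly isolate what the paper's one-liner glosses over: applying \cref{thm:Classical_characterization_subdifferential} to $F_m^*$ produces $f_m^{**}(\logarithm{m} p)$, and trading this for $f_m(\logarithm{m} p)$ requires $f_m = f_m^{**}$, hence (Fenchel--Moreau) convexity \emph{and} lower semicontinuity of $f_m$ --- a hypothesis the corollary as printed does not state, and which even \cref{thm:Manifold_Characterization_of_the_subdifferential} supplies only in part, since it assumes convexity but not lsc; your first route needs the same hypothesis through \cref{cor:Classical_symmetry_subdifferential}.

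Your closing remark, however, has the two implications exactly backwards, and since the printed statement omits convexity the error is worth spelling out. The implication that survives for an \emph{arbitrary} proper $F$ is ``$\Leftarrow$'': if $F_m^*(\xi_m) + f_m(\logarithm{m} p) = \dual{\xi_m}{\logarithm{m} p}$ (both terms are then finite), then for every $\eta_m \in \cotangent{m}$ the definition of $F_m^*$ as a supremum, together with \eqref{eq:pre.mai.theo}, gives
\begin{equation*}
	F_m^*(\eta_m)
	\geq
	\dual{\eta_m}{\logarithm{m} p} - f_m(\logarithm{m} p)
	=
	F_m^*(\xi_m) + \dual{\eta_m - \xi_m}{\logarithm{m} p}
	,
\end{equation*}
\ie, $\logarithm{m} p \in \partial F_m^*(\xi_m)$; no convexity is used. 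The implication ``$\Rightarrow$'' is the one that can fail: your proposed argument for it combines $f_m^{**} \leq f_m$ with Fenchel--Young, but these two inequalities point the same way and only reproduce $F_m^*(\xi_m) + f_m(\logarithm{m} p) \geq \dual{\xi_m}{\logarithm{m} p}$, not the asserted equality. Indeed ``$\Rightarrow$'' breaks down precisely in the scenario you describe (membership at a point where $f_m > f_m^{**}$), except that this scenario refutes ``$\Rightarrow$'', not ``$\Leftarrow$''. Concretely, take $\cM = \cC = \R$, $m = 0$, $F(p) = (p^2-1)^2$, $\xi_m = 0$ and $p = 1/2$. Then $F_m^*(0) = 0$, and $1/2 \in \partial F_m^*(0)$ because choosing $X = \pm 1$ in the supremum defining $F_m^*$ yields $F_m^*(\eta) \geq |\eta| \geq F_m^*(0) + \eta/2$ for all $\eta$; yet $F_m^*(0) + f_m(1/2) = 9/16 \neq 0 = \dual{0}{1/2}$. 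So either make the convexity and lsc hypothesis on $f_m$ explicit and drop the remark on the non-convex case, or restate that remark with the two implications interchanged; the main proof is unaffected.
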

\begin{proof}
	The proof follows directly from the fact that $F_m^*$ is defined on the vector space $\cotangent{m}$ and that $F_m^*$ is convex due to \cref{lem:pro.Fm}~\cref{lem:pro.Fm:convex_lsc}.
\end{proof}
To conclude this section, we state the following result, which generalizes \cref{cor:Classical_symmetry_subdifferential} and shows the symmetric relation between the conjugate function and the subdifferential when the function involved is proper, convex and lsc.
\begin{corollary}
	\label{cor:Manifold_symmetric_property_of_the_subdifferential}%
	Let $F\colon \cC\to\eR$ be a proper function and $m,p\in\cC$.
	If the function $f_m$ defined in \eqref{eqref:fcomp1} is convex and lsc on $\tangent{m}$, then
	\begin{equation}
		\label{eq:Manifold_symmetric_property_of_the_subdifferential}
		\parallelTransport{p}{m}{\xi_p} \in \partial f_m(\logarithm{m} p)
		\quad \Leftrightarrow \quad
		\logarithm{m} p \in \partial F_m^* (\parallelTransport{p}{m}{\xi_p})
		.
	\end{equation}
\end{corollary}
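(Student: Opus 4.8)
The plan is to reduce this symmetric subdifferential relation to its classical counterpart, \cref{cor:Classical_symmetry_subdifferential}, applied to the convex lsc function $f_m$ on the Hilbert space $\tangent{m}$. The key observation is that $f_m \colon \tangent{m} \to \eR$ is proper (by \cref{lem:pro.Fm}~\cref{lem:pro.Fm:proper}), convex, and lsc by assumption, and $\tangent{m}$ equipped with $\riemannian{\cdot}{\cdot}[m]$ is precisely a Hilbert space. Hence all the machinery of \cref{subsec:Convex_Analysis} applies verbatim to $f_m$. The dual space of $\tangent{m}$ is canonically $\cotangent{m}$, and the dual of $\cotangent{m}$ is again identified with $\tangent{m}$, so the two subdifferentials appearing in the statement live in the expected spaces.

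First I would set $X \coloneqq \logarithm{m} p \in \tangent{m}$ and $\xi \coloneqq \parallelTransport{p}{m}{\xi_p} \in \cotangent{m}$, so that the left-hand and right-hand sides of \eqref{eq:Manifold_symmetric_property_of_the_subdifferential} become $\xi \in \partial f_m(X)$ and $X \in \partial F_m^*(\xi)$, respectively. By \cref{lem:pro.Fm}~\cref{lem:pro.Fm:eq} we have $F_m^* = f_m^*$ on $\cotangent{m}$, so the right-hand condition is exactly $X \in \partial f_m^*(\xi)$. The claim therefore collapses to the purely classical equivalence
\begin{equation*}
	\xi \in \partial f_m(X)
	\quad \Leftrightarrow \quad
	X \in \partial f_m^*(\xi),
\end{equation*}
which is precisely the content of \cref{cor:Classical_symmetry_subdifferential} applied to the proper, convex, lsc function $f_m$ on the Hilbert space $\tangent{m}$ (with dual $\cotangent{m}$).

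To make the chain fully rigorous I would route both sides through the characterization in \cref{thm:Manifold_Characterization_of_the_subdifferential} and \cref{cor:Manifold_characterization_conjugate_function}, which already express each membership as the Fenchel--Young equality $f_m(X) + f_m^*(\xi) = \dual{\xi}{X}$. Concretely, $\xi \in \partial f_m(X)$ is equivalent to \eqref{eq:RHS_equivalence_being_in_the_subdifferential} by \cref{thm:Manifold_Characterization_of_the_subdifferential}, and $X \in \partial F_m^*(\xi)$ is equivalent to the same equality $F_m^*(\xi) + f_m(X) = \dual{\xi}{X}$ by \cref{cor:Manifold_characterization_conjugate_function}; since $F_m^* = f_m^*$, these two equalities are identical, and the equivalence follows.

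The only genuine subtlety — and the step I would treat most carefully rather than as routine — is the identification of the dual of $\cotangent{m}$ with $\tangent{m}$ and the verification that $\partial F_m^*(\xi)$, as written just before the corollary in terms of tangent vectors $X \in \tangent{m}$, coincides with the classical subdifferential $\partial f_m^*(\xi)$ of $f_m^*$ under this identification. Once that bookkeeping is in place, no further analytic work is needed: the result is a direct transfer of \cref{cor:Classical_symmetry_subdifferential} through the isometry $f_m = F \circ \exponential{m}$ and the equality $F_m^* = f_m^*$.
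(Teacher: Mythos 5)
Your proof is correct and takes essentially the same route as the paper: the paper's own proof likewise combines \cref{thm:Manifold_Characterization_of_the_subdifferential} with \cref{cor:Manifold_characterization_conjugate_function} applied to $\xi_m = \parallelTransport{p}{m}{\xi_p}$, the two resulting Fenchel--Young equalities coinciding because $F_m^* = f_m^*$. The only cosmetic difference is that the paper cites \cref{theo:fen.mor.man} where you instead invoke \cref{lem:pro.Fm} and the classical symmetry result \cref{cor:Classical_symmetry_subdifferential}; the substance is identical.
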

\begin{proof}
	The proof is a straightforward combination of~\cref{thm:Manifold_Characterization_of_the_subdifferential,theo:fen.mor.man} and taking as a particular cotangent vector $\xi_m = \parallelTransport{p}{m}{\xi_p}$ in~\cref{cor:Manifold_characterization_conjugate_function}.
\end{proof}

\section{Optimization on Manifolds}
\label{sec:Primal-Dual_on_manifolds}

In this section we derive a primal-dual optimization algorithm to solve mini\-mi\-za\-tion problems on Riemannian manifolds of the form
\begin{equation}
	\label{eq:General_problem_for_PD_Chambolle-Pock}
	\text{Minimize} \quad \Fidelity(p) + \Prior(\Lambda(p)), \quad p \in \cC.
\end{equation}
Here $\cC\subset \cM$ and $\cD \subset \cN$ are strongly convex sets, $\Fidelity\colon \cC \to \eR$ and~$\Prior\colon \cD \to \eR$ are proper functions, and $\Lambda \colon \cM \to \cN$ is a general differentiable map such that $\Lambda(\cC) \subset \cD$.
Furthermore, we assume that $\Fidelity\colon \cC \to \eR$ is geodesically convex and that
\begin{equation}\label{eqref:gcomp1}
	g_n(X) =
	\begin{cases}
		G(\exponential{n}{X}), & X \in \LocalExponentialPreimage{n}{\cD},
		\\
		\quad +\infty, & X \notin \LocalExponentialPreimage{n}{\cD},
	\end{cases}
\end{equation}
is proper, convex and lsc on $\tangent{n}[\cN]$ for some $n \in \cD$.
One model that fits these requirements is the dextrous hand grasping problem from \cite[Sect.~3.4]{DirrHelmkeLageman:2007:1}.
There $\cM = \cN = \cP_+(n)$ is the Hadamard manifold of symmetric positive matrices, $F(p) = \trace(wp)$ holds with some $w \in \cM$, and $G(p) = -\log\det(p)$, cf.~\cref{ex:psdms}.
Another model verifying the assumptions will be presented in \cref{sec:ROF_Models}.

Our algorithm requires a choice of a pair of base points $m \in \cC$ and $n \in \cD$.
The role of $m$ is to serve as a possible linearization point for $\Lambda$, while $n$ is the base point of the Fenchel conjugate for $G$.
More generally, the points can be allowed to change during the iterations.
We emphasize this possibility by writing $m^{(k)}$ and $n^{(k)}$ when appropriate.

Under the standing assumptions, the following saddle-point formulation is equivalent to~\eqref{eq:General_problem_for_PD_Chambolle-Pock}:
\begin{equation}
	\label{eq:Manifold_saddle-point_representation_for_PD_Chambolle-Pock}
	\text{Minimize} \quad
	\sup_{\xi_n \in \cotangent{n}[\cN]} \dual[auto]{\logarithm{n} \Lambda(p)}{\xi_n} + \Fidelity(p) - \Prior_n^*(\xi_n),
	\quad
	p \in \cC
	.
\end{equation}
The proof of equivalence uses \cref{theo:fen.mor.man} applied to $G$ and the details are left to the reader.

From now on, we will consider problem \eqref{eq:Manifold_saddle-point_representation_for_PD_Chambolle-Pock}, whose solution by primal-dual optimization algorithms is challenging due to the lack of a vector space structure, which implies in particular the absence of a concept of linearity of $\Lambda$.
This is also the reason why we cannot derive a dual problem associated with \eqref{eq:General_problem_for_PD_Chambolle-Pock} following the same reasoning as in vector spaces.
Therefore we concentrate on the saddle-point problem \eqref{eq:Manifold_saddle-point_representation_for_PD_Chambolle-Pock}.
Following along the lines of \cite[Sect.~2]{Valkonen:2014:1}, where a system of optimality conditions for the Hilbert space counterpart of the saddle-point problem \eqref{eq:Manifold_saddle-point_representation_for_PD_Chambolle-Pock} is stated, we conjecture that if $\paren[big](){\widehat p, \widehat \xi_n} \in \cC \times \cotangent{n}[\cN]$ solves \eqref{eq:Manifold_saddle-point_representation_for_PD_Chambolle-Pock}, then it satisfies the system
\begin{equation}
	\label{eq:First_order_optimality_condition_for_general_saddle_point_problem_with_K_nonlinear_at_solution}
	\begin{aligned}
		-D \Lambda(\widehat{p})^*[\parallelTransport{n}{\Lambda(\widehat{p})}{\widehat \xi_n}]
		&
		\in \partial_\cM F(\widehat{p}),
		\\
		\logarithm{n} \Lambda(\widehat{p})
		&
		\in \partial G_n^*(\widehat \xi_n)
		.
	\end{aligned}
\end{equation}

Motivated by \cite[Sect.~2.2]{Valkonen:2014:1} we propose
to replace $\widehat{p}$ by $m$, the point where we linearize the operator~$\Lambda$, which suggests to consider the system
\begin{equation}
	\label{eq:First_order_optimality_condition_for_general_saddle_point_problem_with_K_nonlinear}
	\begin{aligned}
		\parallelTransport[big]{m}{p}(-D \Lambda(m)^*[\parallelTransport{n}{\Lambda(m)}{\xi_n}])
		&
		\in \partial_\cM F({p}),
		\\
		\logarithm{n} \Lambda({p})
		&
		\in \partial G_n^*(\xi_n)
		,
	\end{aligned}
\end{equation}
for the unknowns $(p,\xi_n)$.

\begin{remark}\label{rem:Particularization_eRCP_Euclidean_setting}
	In the specific case that $\cX=\cM$ and~$\cY=\cN$ are Hilbert spaces, $F\colon \cX\to \R$ is continuously differentiable, $\Lambda \colon \cX \to \cY$ is a linear operator, $m=n=0$, and either $D\Lambda(m)^*$ has empty null space or $\dom G = \cY$, we observe (similar to~\cite{Valkonen:2014:1}) that the conditions~\eqref{eq:First_order_optimality_condition_for_general_saddle_point_problem_with_K_nonlinear} simplify to
	\begin{equation}
		\label{eq:First_order_optimality_condition_for_general_saddle_point_problem_with_K_linear}
		\begin{aligned}
			-\Lambda^* \xi
			&
			\in \partial F(p)
			,
			\\
			\Lambda p
			&
			\in \partial G^*(\xi)
			,
		\end{aligned}
	\end{equation}
	where~$ p \in \cX$ and $ \xi \in \cotangent{n}[\cN] = \cY^*$.
\end{remark}

\subsection{Exact Riemannian Chambolle--Pock}
\label{subsec:Exact_Riemannian_PD_CP}

\begin{algorithm}[tbp]
	\caption{Exact (primal relaxed) Riemannian Chambolle--Pock for~\eqref{eq:Manifold_saddle-point_representation_for_PD_Chambolle-Pock}}
	\label{alg:PrimalOverrelax-eRCP}
	\begin{algorithmic}[1]
		\Require
		$m\in\cC$,
		$n\in\cD$,
		$p^{(0)} \in \cC$,
		$\xi_n^{(0)} \in \cotangent{n}[\cN]$,
		and parameters $\primalstep_0$, $\dualstep_0$, $\theta_0$, $\gamma$
		\State $k \gets 0$, \quad $\bar p^{(0)} \gets p^{(0)}$
		\While{not converged}
		\State
		\label{alg:exact:CP:DualStep}
		$\xi^{(k+1)}_n \gets \prox[Big]{\dualstep_k \Prior_n^*}{ \xi_n^{(k)} + \dualstep_k \paren[auto](){\logarithm[auto]{n} \Lambda \paren[big](){\bar p^{(k)}}}^\flat}$,
		\State
		\label{alg:exact:CP:PrimalStep}
		$p^{(k+1)} \gets \prox[Big]{\primalstep_k \Fidelity}{\exponential[auto]{p^{(k)}}{\parallelTransport{m}{p^{(k)}}{\paren[big](){-\primalstep_k D\Lambda(m)^*\paren[big][]{\parallelTransport{n}{\Lambda(m)}{\xi_n^{(k+1)}}}}}^\sharp}}$,
		\State
		$\theta_k = (1+2\gamma\primalstep_k)^{-\frac{1}{2}}$,
		\quad
		$\primalstep_{k+1} \gets \primalstep_k\theta_k$,
		\quad
		$\dualstep_{k+1} \gets \dualstep_k/\theta_k$
		\State
		\label{alg:exact:CP:UpdateStep_primal}%
		$\bar p^{(k+1)} \gets \exponential[big]{p^{(k+1)}}{-\theta_k \, \logarithm{p^{(k+1)}} p^{(k)} }$
		\State $k \gets k+1$
		\EndWhile
		\Ensure $p^{(k)}$
	\end{algorithmic}
\end{algorithm}

In this subsection we develop the \emph{exact} Riemannian Chambolle--Pock algorithm summarized in~\cref{alg:PrimalOverrelax-eRCP}. The name \eqq{exact}, introduced by~\cite{Valkonen:2014:1}, refers to the fact that the operator~$\Lambda$ in the dual step is used in its exact form and only the primal step employs a linearization in order to obtain the adjoint~$D\Lambda(m)^*$.
Indeed, our \cref{alg:PrimalOverrelax-eRCP} can be interpreted as generalization of \cite[Alg.~2.1]{Valkonen:2014:1}.

Let us motivate the formulation of \cref{alg:PrimalOverrelax-eRCP}.
We start from the second inclusion in \eqref{eq:First_order_optimality_condition_for_general_saddle_point_problem_with_K_nonlinear} and obtain, for any $\dualstep > 0$, the equivalent condition
\begin{equation}
	\label{eq:Formula_for_p_in_CP}
	\xi_n + \dualstep \paren[auto](){\logarithm{n} \Lambda({p})}^\flat
	\in
	\xi_n + \paren[auto](){\dualstep \partial G_n^*( \xi_n)}^\flat
	=
	\paren[big](){\id +(\dualstep \partial G^*_n)^\flat}( \xi_n).
\end{equation}
Similarly we obtain that the first inclusion in~\eqref{eq:First_order_optimality_condition_for_general_saddle_point_problem_with_K_nonlinear} is equivalent to
\begin{equation}
	\label{eq:Motivation_primal_prox}
	- \frac{1}{\primalstep} \paren[auto](){\primalstep \parallelTransport{m}{p}{D \Lambda(m)^*[\parallelTransport{n}{\Lambda(m)}{ \xi_n}]}} \in \partial_\cM F(p)
\end{equation}
for any $\primalstep>0$.
\cref{lem:proxSubdiff} now suggests the following alternating algorithmic scheme:
\begin{subequations}
	\begin{align}
		\xi_n^{(k+1)}
		&
		=
		\prox[big]{\dualstep \Prior^*_n}{\widetilde \xi_n^{(k)}}
		,
		\nonumber
		\\
		p^{(k+1)}
		&
		=
		\prox[big]{\primalstep F}{\widetilde p^{(k)}}
		,
		\nonumber
		\intertext{where}
		\widetilde\xi_n^{(k)}
		&
		\coloneqq \xi_n^{(k)} + \dualstep \paren[Big](){\logarithm[big]{n} \Lambda(\bar p^{(k)})}^\flat
		,
		\\
		\widetilde p^{(k)}
		&
		\coloneqq
		\exponential[auto]{p^{(k)}}{\parallelTransport{m}{p^{(k)}}{-\paren[big](){\primalstep D\Lambda(m)^*\paren[big][]{\parallelTransport{n}{\Lambda(m)}{\xi_n^{(k+1)}}}}^\sharp}},
		\label{eq:widetilde_xi}
		\\
		\bar p^{(k+1)}
		&
		=
		\exponential[big]{p^{(k+1)}}{-\theta \, \logarithm{p^{(k+1)}} p^{(k)}}
		.
		\label{eq:widetilde_p}
	\end{align}
\end{subequations}
Through~$\theta$ we perform an over-relaxation of the primal variable.
This basic form of the algorithm can be combined with an acceleration by step size selection as described in~\cite[Sec.~5]{ChambollePock:2011:1}.
This yields \cref{alg:PrimalOverrelax-eRCP}.

\subsection{Linearized Riemannian Chambolle--Pock}
\label{subsec:Linearized Riemannian Primal-Dual Chambolle-Pock}

The main obstacle in deriving a complete duality theory for problem \eqref{eq:Manifold_saddle-point_representation_for_PD_Chambolle-Pock} is the lack of a concept of linearity of operators $\Lambda$ between manifolds.
In the previous section, we chose to linearize $\Lambda$ in the primal update step only, in order to have an adjoint.
By contrast, we now replace $\Lambda$ by its first order approximation
\begin{equation}
	\label{eq:First_order_approximation_of_Lambda}
	\Lambda (p) \approx \exponential[auto]{\Lambda(m)}{D\Lambda(m)[\logarithm{m} p]}
\end{equation}
everywhere throughout this section.
Here $D\Lambda(m)\colon \tangent{m}\to \tangent{\Lambda(m)}[\cN]$ denotes the derivative (push-forward) of~$\Lambda$ at~$m$.
Since~$D\Lambda\colon \tangentBundle[\cM] \to \tangentBundle[\cN]$ is a linear operator between tangent bundles, we can utilize the adjoint operator $D\Lambda(m)^* \colon \cotangent{\Lambda(m)}[\cN] \to \cotangent{m}$.
We further point out that we can work algorithmically with cotangent vectors $\xi_n \in \cotangent{n}[\cN]$ with a fixed base point~$n$ since, at least locally, we can obtain a cotangent vector~$\xi_{\Lambda(m)}\in \cotangent{\Lambda(m)}[\cN]$ from it by parallel transport using $\xi_{\Lambda(m)} = \parallelTransport{n}{\Lambda(m)}{\xi_n}$.
The duality pairing reads as follows:
\begin{equation}
	\label{eq:Duality_pairing_for_DLambda}
	\dual[auto]{D\Lambda(m)[\logarithm{m} p]}{\parallelTransport{n}{\Lambda(m)}{\xi_n}}
	=
	\dual[auto]{\logarithm{m} p}{D\Lambda(m)^*[\parallelTransport{n}{\Lambda(m)}{\xi_n}]}
\end{equation}
for every~$p\in \cC $ and $\xi_n \in \cotangent{n}[\cN]$.

We substitute the approximation~\eqref{eq:First_order_approximation_of_Lambda} into~\eqref{eq:General_problem_for_PD_Chambolle-Pock}, which
yields the \emph{linearized} primal problem
\begin{equation}\label{eq:Linearized_primal_problem}
	\text{Minimize}
	\quad
	\Fidelity(p) + \Prior\paren[big](){\exponential[auto]{\Lambda(m)}{D\Lambda(m)[\logarithm{m} p]}},
	\quad
	p \in \cC
	.
\end{equation}
For simplicity, we assume $\Lambda(m)=n$ for the remainder of this subection.
Hence, the analogue of the saddle-point problem \eqref{eq:Manifold_saddle-point_representation_for_PD_Chambolle-Pock} reads as follows:
\begin{equation}
	\label{eq:Linearized_manifold_saddle-point_reprsentation_for_PD_Chambolle-Pock}
	\text{Minimize} \quad \sup_{\xi_n \in \cotangent{n}[\cN]} \dual[auto]{D\Lambda(m)[\logarithm{m} p]}{\xi_n} + \Fidelity(p) - \Prior_n^*(\xi_n)
	,
	\quad
	p
	\in \cC
	.
\end{equation}
We refer to it as the \emph{linearized} saddle-point problem.
Similar as for \eqref{eq:General_problem_for_PD_Chambolle-Pock} and \eqref{eq:Manifold_saddle-point_representation_for_PD_Chambolle-Pock}, problems \eqref{eq:Linearized_primal_problem} and \eqref{eq:Linearized_manifold_saddle-point_reprsentation_for_PD_Chambolle-Pock} are equivalent by \cref{theo:fen.mor.man}.
In addition, in contrast to~\eqref{eq:General_problem_for_PD_Chambolle-Pock}, we are now able to also derive a Fenchel dual problem associated with~\eqref{eq:Linearized_primal_problem}.
\begin{theorem}\label{thm:weak-duality}
	The dual problem of~\eqref{eq:Linearized_primal_problem} is given by
	\begin{equation}\label{eq:Linearized_dual_problem}
		\text{Maximize}
		\quad
		- \Fidelity_m^* \paren[big](){-D\Lambda(m)^*[\xi_n]} - \Prior_n^*(\xi_n),
		\quad
		\xi_n\in\cotangent{n}[\cN]
		.
	\end{equation}
	Weak duality holds, \ie,
	\makeatletter
	\IfSubStr{\@currentclass}{svjour3.cls}{%
		\begin{multline}\label{eq:Weak_duality_linearized_pair}
			\inf_{p\in\cC} \paren[auto]\{\}{\Fidelity(p) + \Prior\paren[big](){\exponential[auto]{\Lambda(m)}{D\Lambda(m)[\logarithm{m} p]}}}
			\\
			\geq
			\sup_{\xi_n\in\cotangent{n}[\cN]} \paren[auto]\{\}{-\Fidelity_m^*\paren[big](){-D\Lambda(m)^*[\xi_n]} - \Prior_n^*(\xi_n)}
			.
		\end{multline}
		}{%
		\begin{equation}\label{eq:Weak_duality_linearized_pair}
			\inf_{p\in\cC} \paren[auto]\{\}{\Fidelity(p) + \Prior\paren[big](){\exponential[auto]{\Lambda(m)}{D\Lambda(m)[\logarithm{m} p]}}}
			\geq
			\sup_{\xi_n\in\cotangent{n}[\cN]} \paren[auto]\{\}{-\Fidelity_m^*\paren[big](){-D\Lambda(m)^*[\xi_n]} - \Prior_n^*(\xi_n)}
			.
		\end{equation}
	}
	\makeatother
\end{theorem}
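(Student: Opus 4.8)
The plan is to derive the dual problem by the standard Fenchel-duality route, adapted to the manifold setting through the $m$-conjugate and the linearization of $\Lambda$. Since the problem has already been linearized, the operator $D\Lambda(m)$ is genuinely linear on the tangent space $\tangent{m}$, so all the obstructions coming from nonlinearity of $\Lambda$ have been removed and I can work entirely with the classical conjugation machinery transported to $\tangent{m}$ and $\cotangent{n}[\cN]$. First I would rewrite the primal objective using the saddle-point form. By \cref{theo:fen.mor.man} applied to $G$ (so that $G = G_{nn}^{**}$ on $\cD$, which requires exactly the standing assumption that $g_n$ is proper, convex and lsc), the term $\Prior\paren[big](){\exponential{\Lambda(m)}{D\Lambda(m)[\logarithm{m} p]}}$ equals
\begin{equation*}
	\sup_{\xi_n \in \cotangent{n}[\cN]} \paren[auto]\{\}{\dual{D\Lambda(m)[\logarithm{m} p]}{\xi_n} - \Prior_n^*(\xi_n)},
\end{equation*}
using $\Lambda(m) = n$ and the parallel-transport identification built into the duality pairing. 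This is precisely the passage from \eqref{eq:Linearized_primal_problem} to \eqref{eq:Linearized_manifold_saddle-point_reprsentation_for_PD_Chambolle-Pock}.

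Next I would interchange the infimum over $p$ with the supremum over $\xi_n$ to obtain a lower bound, which is what produces weak duality rather than strong duality. Swapping gives
\begin{equation*}
	\inf_{p\in\cC} \sup_{\xi_n} \bigl\{\cdots\bigr\}
	\geq
	\sup_{\xi_n} \inf_{p\in\cC} \bigl\{\Fidelity(p) + \dual{D\Lambda(m)[\logarithm{m} p]}{\xi_n} - \Prior_n^*(\xi_n)\bigr\},
\end{equation*}
which is the elementary max-min inequality and needs no convexity. The inner infimum over $p$ is then evaluated by moving the adjoint across the pairing via \eqref{eq:Duality_pairing_for_DLambda}, writing $\dual{D\Lambda(m)[\logarithm{m} p]}{\xi_n} = \dual{\logarithm{m} p}{D\Lambda(m)^*[\xi_n]}$, and recognizing the negative of an $m$-conjugate:
\begin{equation*}
	\inf_{p\in\cC} \paren[auto]\{\}{\Fidelity(p) + \dual{\logarithm{m} p}{D\Lambda(m)^*[\xi_n]}}
	=
	-\sup_{p\in\cC} \paren[auto]\{\}{\dual{-D\Lambda(m)^*[\xi_n]}{\logarithm{m} p} - \Fidelity(p)}
	=
	-\Fidelity_m^*\paren[big](){-D\Lambda(m)^*[\xi_n]},
\end{equation*}
where the last equality is \cref{def:Fenchel_conjugate_on_M}, using that $\logarithm{m}$ is a bijection between $\cC$ and $\LocalExponentialPreimage{m}{\cC}$ because $\cC$ is strongly convex. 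Substituting this back yields exactly the dual objective in \eqref{eq:Linearized_dual_problem} and the inequality \eqref{eq:Weak_duality_linearized_pair}.

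The only subtle point — and where I expect to spend care rather than encounter a genuine obstacle — is the bookkeeping of base points and the parallel transport $\parallelTransport{n}{\Lambda(m)}{\cdot}$ in the duality pairing. Because the subsection assumes $\Lambda(m) = n$, the transport is trivial and the pairing \eqref{eq:Duality_pairing_for_DLambda} can be used directly with $\xi_n$ in place of $\xi_{\Lambda(m)}$; I would state this reduction explicitly at the outset so that the adjoint $D\Lambda(m)^* \colon \cotangent{n}[\cN] \to \cotangent{m}$ lands in the correct cotangent space and the argument of $\Fidelity_m^*$ is a legitimate element of $\cotangent{m}$. No strong-duality hypothesis (such as a constraint qualification or Slater-type condition) is needed, since only the max-min inequality is invoked; establishing the reverse inequality would require additional assumptions and is deliberately not claimed here.
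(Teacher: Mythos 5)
Your proposal is correct and follows essentially the same route as the paper: the paper's (one-line) proof likewise passes through the saddle-point form \eqref{eq:Linearized_manifold_saddle-point_reprsentation_for_PD_Chambolle-Pock} (justified by \cref{theo:fen.mor.man} applied to $G$), invokes the minimax/weak-duality inequality (cited there as \cite[eq.~(2.80)]{Zalinescu:2002:1}), and identifies the inner infimum as $-\Fidelity_m^*\paren[big](){-D\Lambda(m)^*[\xi_n]}$ via \cref{def:Fenchel_conjugate_on_M}. Your write-up simply makes explicit the steps the paper compresses, including the correct handling of $\Lambda(m)=n$ and the bijectivity of $\logarithm{m}$ on the strongly convex set $\cC$.
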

\begin{proof}
	The proof of \eqref{eq:Linearized_dual_problem} and \eqref{eq:Weak_duality_linearized_pair} follows from the application of~\cite[eq.(2.80)]{Zalinescu:2002:1} and \cref{def:Fenchel_conjugate_on_M} in \eqref{eq:Linearized_manifold_saddle-point_reprsentation_for_PD_Chambolle-Pock}.
\end{proof}

Notice that the analogue of \eqref{eq:First_order_optimality_condition_for_general_saddle_point_problem_with_K_nonlinear} is
\begin{equation} \label{eq:First_order_optimality_condition_for_linearized_primal_dual_with_K_nonlinear}
	\begin{aligned}
		\parallelTransport[big]{m}{p}(-D \Lambda(m)^*[ \xi_n])
		&
		\in \partial_\cM F({p})
		,
		\\
		D \Lambda(m)[\logarithm{m} p]
		&
		\in \partial G_n^*( \xi_n)
		.
	\end{aligned}
\end{equation}
In the situation described in \cref{rem:Particularization_eRCP_Euclidean_setting}, \eqref{eq:First_order_optimality_condition_for_linearized_primal_dual_with_K_nonlinear} agrees with \eqref{eq:First_order_optimality_condition_for_general_saddle_point_problem_with_K_linear}.
Motivated by the statement of the linearized primal-dual pair \eqref{eq:Linearized_primal_problem}, \eqref{eq:Linearized_dual_problem} and saddle-point system \eqref{eq:Linearized_manifold_saddle-point_reprsentation_for_PD_Chambolle-Pock}, a further development of duality theory and an investigation of the linearization error is left for future research.

Both the exact and the linearized variants of our Riemannian Chambolle--Pock algorithm (RCPA) can be stated in two variants, which  over-relax either the primal variable as in~\cref{alg:PrimalOverrelax-eRCP}, or the dual variable as in \cref{alg:DualOverrelax-lRCP}.
In total this yields four possibilities --- exact vs.\ linearized, and primal vs.\ dual over-relaxation.
This generalizes the analogous cases discussed in \cite{Valkonen:2014:1} for the Hilbert space setting.
In each of the four cases, it is possible to allow changes in the base points, and moreover, $n^{(k)}$ may be equal or different from $\Lambda(m^{(k)})$.
Letting $m^{(k)}$ depend on~$k$ changes the linearization point of the operator, while allowing $n^{(k)}$ to change introduces different $n^{(k)}$-Fenchel conjugates $\Prior_{n^{(k)}}^*$, and it also incurs a parallel transport on the dual variable.
These possibilities are reflected in the statement of \cref{alg:DualOverrelax-lRCP}.

Reasonable choices for the base points include, \eg, to set both~$m^{(k)}=m$ and $n^{(k)}=\Lambda(m)$, for $k\geq 0$ and some $m\in\cM$.
This choice eliminates the parallel transport in the dual update step as well as the innermost parallel transport of the primal update step.
Another choice is to fix just~$n$ and set~$m^{(k)} = p^{(k)}$, which eliminates the parallel transport in the primal update step.
It further eliminates both parallel transports of the dual variable in steps~\ref{alg:lin:CP:UpdateStep_dual} and~\ref{alg:linCP:PTdual} of \cref{alg:DualOverrelax-lRCP}.

\begin{algorithm}[tbp]
	\caption{Linearized (dual relaxed) Riemannian Chambolle--Pock for~\eqref{eq:Linearized_manifold_saddle-point_reprsentation_for_PD_Chambolle-Pock}}
	\label{alg:DualOverrelax-lRCP}
	\begin{algorithmic}[1]
		\Require
		$m^{(k)}\in\cC$,
		$n^{(k)}\in\cD$,
		$p^{(0)} \in \cC$,
		$\xi_n^{(0)} \in \cotangent{n^{(0)}[\cN]}$,
		and parameters $\primalstep_0$, $\dualstep_0$, $\theta_0$, $\gamma$
		\State
		$k \gets 0$, \quad $\bar p^{(0)} \gets p^{(0)}$
		\While{not converged}
		\State
		\label{alg:lin:CP:PrimalStep}
		$p^{(k+1)} \gets \prox[Big]{\primalstep_k\Fidelity}{\exponential[Big]{p^{(k)}}{\parallelTransport{m^{(k)}}{p^{(k)}}{\paren[big](){-\primalstep_k D\Lambda(m^{(k)})^* \paren[big][]{\parallelTransport{n^{(k)}}{\Lambda(m^{(k)})}{\bar \xi_{n^{(k)}}^{(k)}}}}^\sharp}}}$
		\State
		\label{alg:lin:CP:DualStep}
		$\xi_{n^{(k)}}^{(k+1)} \gets \prox[Big]{\dualstep_k \Prior_{\!n^{(k)}}^*}{\xi_{n^{(k)}}^{(k)} + \dualstep_k \paren[big](){\parallelTransport{\Lambda(m^{(k)})}{n^{(k)}}{D\Lambda(m^{(k)})[\logarithm{m^{(k)}} p^{(k+1)}]}}^\flat}$
		\State
		$\theta_k = (1+2\gamma\primalstep_k)^{-\frac{1}{2}}$,
		\quad
		$\primalstep_{k+1} \gets \primalstep_k\theta_k$,
		\quad
		$\dualstep_{k+1} \gets \dualstep_k/\theta_k$
		\State
		\label{alg:lin:CP:UpdateStep_dual}%
		$\bar \xi_{n^{(k+1)}}^{(k+1)}
		\gets
		\parallelTransport{n^{(k)}}{n^{(k+1)}}{\paren[auto](){\xi_{n^{(k)}}^{(k+1)} + \theta \, \paren[big](){\xi_{n^{(k)}}^{(k+1)}-\xi_{n^{(k)}}^{(k)}}}}$
		\State
		\label{alg:linCP:PTdual} $\xi_{n^{(k+1)}}^{(k+1)}
		\gets
		\parallelTransport{n^{(k)}}{n^{(k+1)}}{\xi_{n^{(k)}}^{(k+1)}}$
		\State $k \gets k+1$
		\EndWhile
		\Ensure
		$p^{(k)}$
	\end{algorithmic}
\end{algorithm}

\subsection{Relation to the Chambolle--Pock Algorithm in Hilbert Spaces}
\label{subsec:CP_in_Hilbert_space}

In this subsection we confirm that both \cref{alg:PrimalOverrelax-eRCP} and \cref{alg:DualOverrelax-lRCP} boil down to the classical Chambolle--Pock method in Hilbert spaces; see \cite[Alg.~1]{ChambollePock:2011:1}.
To this end, suppose in this subsection that $\cM = \cX$ and $\cN = \cY$ are finite-dimensional Hilbert spaces with inner products $\inner{\cdot}{\cdot}_\cX$ and $\inner{\cdot}{\cdot}_\cY$, respectively, and that $\Lambda\colon \cX \to \cY$ is a linear operator.
In Hilbert spaces, geodesics are straight lines in the usual sense.
Moreover, $\cX$ and $\cY$ can be identified with their tangent spaces at arbitrary points, the exponential map equals addition, and the logarithmic map equals subtraction.
In addition, all parallel transports are identity maps.

We are now showing that \cref{alg:PrimalOverrelax-eRCP} reduces to the classical Chambolle--Pock method when $n = 0 \in \cY$ is chosen.
The same then holds true for \cref{alg:DualOverrelax-lRCP} as well since $\Lambda$ is already linear.
Notice that the iterates $p^{(k)}$ belong to $\cX$ while the iterates $\xi^{(k)}$ belong to $\cY^*$.
We can drop the fixed base point $n = 0$ from their notation.
Also notice that $G_0^*$ agrees with the classical Fenchel conjugate and it will be denoted by $G^* \colon \cY \to \eR$.

We only need to consider steps~\ref{alg:exact:CP:DualStep},~\ref{alg:exact:CP:PrimalStep} and~\ref{alg:exact:CP:UpdateStep_primal} in \cref{alg:PrimalOverrelax-eRCP}.
The dual update step becomes
\begin{equation*}
	\xi^{(k+1)} \gets \prox[Big]{\dualstep_k \Prior^*}{ \xi^{(k)} + \dualstep_k \paren[big](){\Lambda \bar p^{(k)}}^\flat}
	.
\end{equation*}
Here $\flat \colon \cY \to \cY^*$ denotes the Riesz isomorphism for the space $\cY$.
Next we address the primal update step, which reads
\begin{equation*}
	p^{(k+1)} \gets \prox[Big]{\primalstep_k \Fidelity}{ p^{(k)} - \primalstep_k \paren[big](){\Lambda^* \xi^{(k+1)}}^\sharp }
	.
\end{equation*}
Here $\sharp \colon \cX^* \to \cX$ denotes the inverse Riesz isomorphism for the space $\cX$.
Finally, the (primal) extrapolation step becomes
\begin{equation*}
	\bar p^{(k+1)} \gets p^{(k+1)} - \theta_k \, \paren[big](){p^{(k)} - p^{(k+1)}}
	=
	p^{(k+1)} + \theta_k \, \paren[big](){p^{(k+1)} - p^{(k)}}
	.
\end{equation*}
The steps above agree with~\cite[Alg.~1]{ChambollePock:2011:1} (with the roles of $F$ and $G$ reversed).

\subsection{Convergence of the Linearized Chambolle--Pock Algorithm}
\label{subsec:convergence_lRCPA}

In the following we adapt the proof of~\cite{ChambollePock:2011:1} to solve the linearized saddle-point problem~\eqref{eq:Linearized_manifold_saddle-point_reprsentation_for_PD_Chambolle-Pock}.
We restrict the discussion to the case where $\cM$ and $\cN$ are Hadamard manifolds and $\cC = \cM$ and $\cD = \cN$.
Recall that in this case we have $\LocalExponentialPreimage{n}{\cN} = \tangent{n}[\cN]$ so $g_n = G \circ \exponential{n}$ holds everywhere on $\tangent{n}[\cN]$.
Moreover, we fix $m \in \cM$ and $n \coloneqq \Lambda(m) \in \cN$ during the iteration and set the acceleration parameter $\gamma$ to zero and choose the over-relaxation parameter $\theta_k \equiv 1$ in \cref{alg:DualOverrelax-lRCP}.

Before presenting the main result of this section and motivated by the condition introduced after~\cite[eq.(2.4)]{Valkonen:2014:1}, we introduce the following constant
\begin{equation}
	\label{eq:Definition_L}
	L \coloneqq \riemanniannorm{D\Lambda(m)}[n],
\end{equation}
\ie, the operator norm of $D\Lambda(m) \colon \tangent{m}[\cM] \to \tangent{n}[\cN]$.

\begin{theorem}
	\label{thm:Convergence_of_the_linearized_CP}%
	Suppose that $\cM$ and~$\cN$ are two Hadamard manifolds.
	Let $\Fidelity\colon\cM\to\eR$, $\Prior\colon\cN\to\eR$ be proper and lsc functions, and let $\Lambda \colon \cM \to \cN$ be differentiable.
	Fix $m \in \cM$ and $n \coloneqq \Lambda(m) \in \cN$.
	Assume that $F$ is geodesically convex and that $g_n = G \circ \exp_n$ is convex on $\tangent{n}[\cN]$.
	Suppose that the linearized saddle-point problem~\eqref{eq:Linearized_manifold_saddle-point_reprsentation_for_PD_Chambolle-Pock} has a saddle-point~$\paren[big](){\widehat p,\widehat \xi_n}$.
	Choose~$\primalstep$, $\dualstep$ such that~$\primalstep\dualstep L^2<1$, with~$L$ defined in~\eqref{eq:Definition_L}, and let the iterates~$\paren[big](){\xi^{(k)}_n,p^{(k)},\bar\xi_n^{(k)}}$ be given by~\cref{alg:DualOverrelax-lRCP}.
	Suppose that there exists $K \in \N$ such that for all $k \ge K$, the following holds:
	\begin{equation}
		\label{eq:Sufficient_conditions}
		C(k)
		\coloneqq
		\frac{1}{\primalstep} \dist[big]{p^{(k)}}{\widetilde p^{(k)}}[2]
		+ \dual[big]{\bar\xi_n^{(k)}}{D\Lambda(m)[\zeta_k]}
		\geq
		0
		,
	\end{equation}
	where
	\begin{equation*}
		\widetilde p^{(k)}
		\coloneqq
		\exponential[auto]{p^{(k)}}{\parallelTransport{m}{p^{(k)}}{\paren[big](){-\primalstep D\Lambda(m)^*\paren[big][]{2 \xi^{(k)}_n - \xi_n^{(k-1)}}}}^\sharp}
		,
	\end{equation*}
	and
	\begin{equation*}
		\zeta_k
		\coloneqq
		\parallelTransport{p^{(k)}}{m}{\paren[Big](){\logarithm{p^{(k)}} p^{(k+1)} - \parallelTransport{\widetilde p^{(k)}}{p^{(k)}}{\logarithm{\widetilde p^{(k)}}} \widehat p}}
		- \logarithm{m} p^{(k+1)}
		+ \logarithm{m} \widehat p
	\end{equation*}
	holds with $\bar \xi_n^{(k)} = 2\xi_n^{(k)}-\xi_n^{(k-1)}$.
	Then the following statements are true.
	\begin{enumerate}
		\item
			\label{item:Boundedness_CP}
			The sequence~$\paren[big](){p^{(k)},\xi^{(k)}_n}$ remains bounded, \ie,
			\begin{equation}
				\label{eq:Boundedness_of_CP_iterations}
				\frac{1}{2\dualstep} \riemanniannorm[big]{\widehat \xi_n - \xi^{(k)}_n}[n]^2 + \frac{1}{2\primalstep} \dist[big]{p^{(k)}}{\widehat p}[2]
				\leq
				\frac{1}{2\dualstep} \riemanniannorm[big]{\widehat \xi_n - \xi^{(0)}_n}[n]^2 + \frac{1}{2\primalstep} \dist[big]{p^{(0)}}{\widehat p}[2]
				.
			\end{equation}
		\item
			\label{item:Convergence_to_saddle-point_CP}
			There exists a saddle-point~$(p^*,\xi_n^*)$ such that~$p^{(k)}\to p^*$ and~$\xi^{(k)}_n \to \xi_n^*$.
	\end{enumerate}
\end{theorem}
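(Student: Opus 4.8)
The plan is to transport the convergence proof of the classical Chambolle--Pock method from~\cite{ChambollePock:2011:1} to the Hadamard setting, while tracking the curvature defects that the hypothesis collects into the quantity $C(k)$ from~\eqref{eq:Sufficient_conditions}. First I would convert the two update steps of \cref{alg:DualOverrelax-lRCP} into subgradient inclusions. Applying \cref{lem:proxSubdiff} to the primal update (\cref{alg:lin:CP:PrimalStep}) gives $\tfrac{1}{\primalstep}\paren[big](){\logarithm{p^{(k+1)}} \widetilde p^{(k)}}^\flat \in \partial_\cM F(p^{(k+1)})$, whereas the dual update (\cref{alg:lin:CP:DualStep}) takes place entirely in the Hilbert space $\cotangent{n}[\cN]$, where the classical prox optimality condition yields $\tfrac{1}{\dualstep}\paren[big](){\xi_n^{(k)}-\xi_n^{(k+1)}}^\sharp + D\Lambda(m)[\logarithm{m} p^{(k+1)}] \in \partial G_n^*(\xi_n^{(k+1)})$. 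Testing these two inclusions against the saddle point $\paren[big](){\widehat p,\widehat\xi_n}$ by means of the geodesic convexity of $F$ (\cref{def:Subdifferential}) and the convexity of $g_n$ on $\tangent{n}[\cN]$, and combining with the saddle-point inclusions~\eqref{eq:First_order_optimality_condition_for_linearized_primal_dual_with_K_nonlinear}, makes the $F$- and $G_n^*$-values cancel pairwise and leaves a purely geometric inequality in the iterates.

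The heart of the argument is the resulting one-step estimate. On the dual side the Hilbert-space polarization identity $2\riemannian{u-v}{w-v}[n] = \riemanniannorm{u-v}[n]^2 + \riemanniannorm{w-v}[n]^2 - \riemanniannorm{u-w}[n]^2$ for $u,v,w\in\cotangent{n}[\cN]$ applies verbatim. On the primal side, where the pairings involve logarithms at different base points, I would instead invoke the law-of-cosines inequality valid on a Hadamard manifold, namely $\dist{x}{z}[2] \geq \dist{x}{y}[2] + \dist{y}{z}[2] - 2\riemannian{\logarithm{y} x}{\logarithm{y} z}[y]$, which replaces the Euclidean equality and accounts for the curvature surplus. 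Collecting the terms that fail to telescope in the flat case produces exactly the coupling $\dual{\bar\xi_n^{(k)}}{D\Lambda(m)[\zeta_k]}$ together with $\tfrac{1}{\primalstep}\dist{p^{(k)}}{\widetilde p^{(k)}}[2]$, that is, the quantity $C(k)$; a short computation confirms that $C(k)\equiv 0$ when $\cM$ is Euclidean, so $C(k)\ge 0$ is precisely the curvature penalty we must assume. Rewriting the mixed terms through $\bar\xi_n^{(k)}=2\xi_n^{(k)}-\xi_n^{(k-1)}$ and bounding the cross term via $\riemanniannorm{D\Lambda(m)}[n]=L$ and Young's inequality, the condition $\primalstep\dualstep L^2<1$ supplies a nonnegative slack $R(k)\ge0$ and yields
\begin{multline*}
	\frac{1}{2\dualstep}\riemanniannorm[big]{\widehat\xi_n - \xi^{(k+1)}_n}[n]^2 + \frac{1}{2\primalstep}\dist[big]{p^{(k+1)}}{\widehat p}[2] + C(k) + R(k)
	\\
	\leq \frac{1}{2\dualstep}\riemanniannorm[big]{\widehat\xi_n - \xi^{(k)}_n}[n]^2 + \frac{1}{2\primalstep}\dist[big]{p^{(k)}}{\widehat p}[2].
\end{multline*}

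Given this descent inequality, \cref{item:Boundedness_CP} follows for $k\ge K$ by dropping the nonnegative terms $C(k)+R(k)$ on the left, which shows the left-hand energy is nonincreasing and Fejér-monotone with respect to $\paren[big](){\widehat p,\widehat\xi_n}$, hence~\eqref{eq:Boundedness_of_CP_iterations}. For \cref{item:Convergence_to_saddle-point_CP} I would first sum the estimate: since $R(k)$ is summable, the increments satisfy $\dist{p^{(k)}}{p^{(k+1)}}\to 0$ and $\riemanniannorm{\xi^{(k)}_n-\xi^{(k+1)}_n}[n]\to 0$. Because $\cM$ and $\cN$ are complete and finite-dimensional, the bounded sequence $\paren[big](){p^{(k)},\xi^{(k)}_n}$ has, by Hopf--Rinow, a subsequence converging to some $\paren[big](){p^*,\xi_n^*}$; passing to the limit in the two subgradient inclusions, and using lower semicontinuity, the convexity of $F$ and $g_n$, closedness of the subdifferentials, and the vanishing increments, shows that $\paren[big](){p^*,\xi_n^*}$ solves~\eqref{eq:First_order_optimality_condition_for_linearized_primal_dual_with_K_nonlinear} and is therefore a saddle point. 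Applying the Fejér estimate~\eqref{eq:Boundedness_of_CP_iterations} with this particular saddle point then forces the full energy sequence to converge, and since a subsequence tends to zero it converges to zero, giving $p^{(k)}\to p^*$ and $\xi^{(k)}_n\to\xi_n^*$.

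The main obstacle I anticipate is the bookkeeping in the one-step estimate: since parallel transport and the logarithm do not commute on a curved manifold, the primal cross term pairing $\logarithm{p^{(k+1)}} \widetilde p^{(k)}$ with $\logarithm{p^{(k+1)}} \widehat p$ cannot be recast as clean squared distances without introducing the transported defect $\zeta_k$, and the delicate point is to verify that all such defects assemble \emph{exactly} into $C(k)$ rather than leaving an uncontrolled remainder. A secondary difficulty is that the Fejér estimate, and hence boundedness, is only guaranteed for $k\ge K$, so the convergence argument must be carried out on the tail of the sequence.
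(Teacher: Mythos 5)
Your outline reproduces the paper's own strategy almost step for step: \cref{lem:proxSubdiff} turns both prox steps into subgradient inclusions, the subdifferential inequalities are tested at the saddle point, the Hadamard law of cosines replaces Euclidean polarization on the primal side, the leftover curvature terms are identified with $C(k)$, the cross term is bounded via $L$ and Young's inequality under $\primalstep\dualstep L^2<1$, and part~(ii) follows the cluster-point-plus-Fej\'er scheme of Chambolle--Pock. However, one intermediate step fails as stated: the claimed clean one-step descent inequality for the product energy $\frac{1}{2\dualstep}\riemanniannorm{\widehat\xi_n-\xi_n^{(k)}}[n]^2+\frac{1}{2\primalstep}\dist{p^{(k)}}{\widehat p}[2]$. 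After inserting $\bar p = p^{(k+1)}$ and $\bar\xi_n = 2\xi_n^{(k)}-\xi_n^{(k-1)}$, the one-iteration estimate unavoidably retains the sign-indefinite pair of cross terms
\begin{equation*}
	- \dual{\xi_n^{(k+1)}-\xi_n^{(k)}}{D\Lambda(m)[\logarithm{m} p^{(k+1)} - \logarithm{m} \widehat p]}
	+ \dual{\xi_n^{(k)}-\xi_n^{(k-1)}}{D\Lambda(m)[\logarithm{m} p^{(k)} - \logarithm{m} \widehat p]}
\end{equation*}
together with the shifted-index term $-\frac{L\sqrt{\primalstep}}{2\sqrt{\dualstep}}\riemanniannorm{\xi_n^{(k-1)}-\xi_n^{(k)}}[n]^2$. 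These cannot be absorbed into a nonnegative slack $R(k)$ at fixed $k$; they cancel only by telescoping after the estimate is summed over $k=0,\dots,N-1$ (with $\xi_n^{(-1)}\coloneqq\xi_n^{(0)}$), and the single surviving endpoint term is then controlled by one further Young estimate with parameter $1/(\dualstep L)$.

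The failure is not merely a technicality of the Riemannian setting: per-step monotonicity of the product energy is false already in the Euclidean case covered by the theorem. Take $F\equiv 0$, $G_n^*\equiv 0$ (so $G$ is the indicator of $\{0\}$), $\Lambda$ the identity on $\R$, $\primalstep=\dualstep=0.99$, saddle point $(\widehat p,\widehat\xi)=(0,0)$, and start at $p^{(0)}=1$, $\xi^{(0)}=\bar\xi^{(0)}=0$. Then the primal step gives $p^{(1)}=1$, the dual step gives $\xi^{(1)}=0.99$, and the energy jumps from $\frac{1}{1.98}$ to $\frac{1.9801}{1.98}$, even though $\primalstep\dualstep L^2<1$ and $C(k)\equiv 0$ here. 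For $\theta=1$ the Chambolle--Pock iteration is Fej\'er monotone only in a weighted norm that contains the cross term $-2\dual{\xi}{D\Lambda(m)[X]}$, not in the product norm; this is also why the conclusion \eqref{eq:Boundedness_of_CP_iterations} compares iterate $k$ with iterate $0$ rather than asserting a monotone decrease. The repair is exactly the summation/telescoping argument you would have needed anyway, and it is what the paper does; with it, your subsequent conclusions (boundedness from the non-negativity of the accumulated primal-dual gaps and of $\sum C(k)$, increments tending to zero, and the tail/subsequence argument for part~(ii)) go through as sketched.
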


\begin{remark}
	\label{rem:Interpretation_of_C}%
	A main difference of~\cref{thm:Convergence_of_the_linearized_CP} to the Hilbert space case is the condition on~$C(k)$.
	Restricting this theorem to the setting of \cref{subsec:CP_in_Hilbert_space}, the parallel transport and the logarithmic map simplify to the identity and subtraction, respectively.
	Then
	\makeatletter
	\IfSubStr{\@currentclass}{svjour3.cls}{%
		\begin{equation*}
			\begin{aligned}
				\zeta_k
				&
				=
				p^{(k+1)} - p^{(k)} - \widehat p + \widetilde p^{(k)} - p^{(k+1)} + m + \widehat p - m
				\\
				&
				=
				\widetilde p^{(k)}-p^{(k)}
				=
				- \paren[big](){\primalstep D\Lambda(m)^*[\bar\xi^{(k)}_n]}^\sharp
			\end{aligned}
		\end{equation*}
		}{
		\begin{equation*}
			\zeta_k
			=
			p^{(k+1)} - p^{(k)} - \widehat p + \widetilde p^{(k)} - p^{(k+1)} + m + \widehat p - m
			=
			\widetilde p^{(k)}-p^{(k)}
			=
			- \paren[big](){\primalstep D\Lambda(m)^*[\bar\xi^{(k)}_n]}^\sharp
		\end{equation*}
	}
	\makeatother
	holds and hence $C(k)$ simplifies to
	\begin{equation*}
		C(k)
		=
		\primalstep \, \riemanniannorm[big]{D\Lambda(m)^*[\bar\xi_n ^{(k)}]}[\cY^*]^2
		-
		\primalstep \dual[big]{\bar\xi_n^{(k)}}{D\Lambda(m)\paren[big][]{\paren[big](){D\Lambda(m)^*[\bar\xi_n^{(k)}]}^\sharp}}
		=
		0
	\end{equation*}
	for any~$\bar \xi_n^{(k)}$, so condition~\eqref{eq:Sufficient_conditions} is satisfied for all $k\in\N$.
\end{remark}

\begin{proof}[Proof of~\cref{thm:Convergence_of_the_linearized_CP}]
	Recall that we assume~$\Lambda(m)=n$.
	Following along the lines of~\cite[Thm.~1]{ChambollePock:2011:1}, we first write a generic iteration of~\cref{alg:DualOverrelax-lRCP} for notational convenience in a general form
	\begin{equation}
		\label{eq:Thm_0_General_form_iterations}
		\begin{alignedat}{2}
			p^{(k+1)}
			&
			= \prox[big]{\primalstep F}{\widetilde p^{(k)}}
			,
			\quad
			&
			\widetilde p^{(k)}
			&
			\coloneqq
			\exponential[auto]{p^{(k)}}{\parallelTransport{m}{p^{(k)}}{\paren[auto](){-\primalstep D\Lambda(m)^*[\bar \xi_n]}}^\sharp}
			,
			\\
			\xi^{(k+1)}_n
			&
			=
			\prox[big]{\dualstep \Prior^*_n}{\widetilde \xi^{(k)}_n}
			,
			\quad
			&
			\widetilde \xi_n^{(k)}
			&
			\coloneqq
			\xi_n^{(k)} + \dualstep \paren[auto](){D\Lambda(m)[\logarithm{m} \bar p]}^\flat
			.
		\end{alignedat}
	\end{equation}
	We are going to insert $\bar p = p^{(k+1)}$ and $\bar \xi_n = 2 \xi^{(k)}_n - \xi_n^{(k-1)}$ later on, which ensure the iterations agree with \cref{alg:DualOverrelax-lRCP}.
	Applying~\cref{lem:proxSubdiff}, we get
	\begin{equation}
		\label{eq:Thm_1_Equiv_general_form_iterations}
		\begin{aligned}
			\frac{1}{\primalstep} \paren[big](){\logarithm{p^{(k+1)}} \widetilde p^{(k)}}^\flat
			&
			\in \partial_\cM F\paren[big](){p^{(k+1)}}
			,
			\\
			\dualstep^{-1} \paren[big](){\xi_n^{(k)} - \xi_n^{(k+1)}}^\sharp
			+ D\Lambda(m)[\logarithm{m} \bar p]
			&
			\in \partial G_n^*\paren[big](){\xi^{(k+1)}_n}
			.
		\end{aligned}
	\end{equation}
	Due to~\cref{def:Classiscal_convex_subdifferential} and~\cref{def:Subdifferential}, we obtain for every~$\xi_n\in \cotangent{n}[\cN]$ and~$p\in \cM$ the inequalities
	\makeatletter
	\begin{align}
		F(p)
		&
		\geq F(p^{(k+1)}) + \frac{1}{\primalstep} \riemannian[big]{\logarithm{p^{(k+1)}} \widetilde p^{(k)}}{\logarithm{p^{(k+1)}} p}[p^{(k+1)}],
		\nonumber
		\\
		G_n^* (\xi_n)
		&
		\geq
		G_n^*\paren[big](){\xi^{(k+1)}_n}
		+ \frac{1}{\dualstep} \riemannian[big]{\xi^{(k)}_n -\xi^{(k+1)}_n}{\xi_n - \xi^{(k+1)}_n}[n]
		\IfSubStr{\@currentclass}{svjour3.cls}{%
			\nonumber
			\\
			&
			\quad
			}{%
		}
		+ \dual[big]{\xi_n - \xi^{(k+1)}_n}{D\Lambda (m) [\logarithm{m} \bar p]}
		.
		\label{eq:Thm_1_Equivalent_with_subdifferential}
	\end{align}
	\makeatother
	A concrete choice for $p$ and $\xi_n$ will be made below.
	Now we consider the geodesic triangle~$\Delta = \paren[big](){\widetilde p^{(k)},p^{(k+1)},p}$.
	Applying the law of cosines in Hadamard manifolds (\cite[Thm.~2.2]{FerreiraOliveira:2002:1}), we obtain
	\makeatletter
	\IfSubStr{\@currentclass}{svjour3.cls}{%
		\begin{multline*}
			\frac{1}{\primalstep}
			\riemannian[big]{\logarithm{p^{(k+1)}} \widetilde p^{(k)}}{\logarithm{p^{(k+1)}} p}[p^{(k+1)}]
			\\
			\geq
			\frac{1}{2\primalstep} \dist[big]{\widetilde p^{(k)}}{p^{(k+1)}}[2]
			+
			\frac{1}{2\primalstep} \dist[big]{p}{p^{(k+1)}}[2]
			- \frac{1}{2\primalstep} \dist[big]{\widetilde p^{(k)}}{p}[2]
			.
		\end{multline*}
		}{%
		\begin{equation*}
			\frac{1}{\primalstep}
			\riemannian[big]{\logarithm{p^{(k+1)}} \widetilde p^{(k)}}{\logarithm{p^{(k+1)}} p}[p^{(k+1)}]
			\geq
			\frac{1}{2\primalstep} \dist[big]{\widetilde p^{(k)}}{p^{(k+1)}}[2]
			+
			\frac{1}{2\primalstep} \dist[big]{p}{p^{(k+1)}}[2]
			- \frac{1}{2\primalstep} \dist[big]{\widetilde p^{(k)}}{p}[2]
			.
		\end{equation*}
	}
	\makeatother
	Rearranging the law of cosines for the triangle~$\Delta = \paren[big](){p^{(k)},\widetilde p^{(k)},p}$ yields
	\makeatletter
	\IfSubStr{\@currentclass}{svjour3.cls}{%
		\begin{multline*}
			- \frac{1}{2\primalstep} \dist[big]{\widetilde p^{(k)}}{p}[2]
			\\
			\geq
			\frac{1}{2\primalstep} \dist[big]{\widetilde p^{(k)}}{p^{(k)}}[2]
			-
			\frac{1}{2\primalstep} \dist[big]{p^{(k)}}{p}[2]
			-
			\frac{1}{\primalstep}
			\riemannian[big]{\logarithm{\widetilde p^{(k)}} p^{(k)}}{ \logarithm{\widetilde p^{(k)}} p}[\widetilde{p}^{(k)}].
		\end{multline*}
		}{%
		\begin{equation*}
			- \frac{1}{2\primalstep} \dist[big]{\widetilde p^{(k)}}{p}[2]
			\geq
			\frac{1}{2\primalstep} \dist[big]{\widetilde p^{(k)}}{p^{(k)}}[2]
			-
			\frac{1}{2\primalstep} \dist[big]{p^{(k)}}{p}[2]
			-
			\frac{1}{\primalstep}
			\riemannian[big]{\logarithm{\widetilde p^{(k)}} p^{(k)}}{ \logarithm{\widetilde p^{(k)}} p}[\widetilde{p}^{(k)}].
		\end{equation*}
	}
	\makeatother
	We rephrase the last term as
	\begin{align*}
		\MoveEqLeft
		-\frac{1}{\primalstep} \riemannian[big]{\logarithm{\widetilde p^{(k)}} p^{(k)}}{\logarithm{\widetilde p^{(k)}} p}[\widetilde{p}^{(k)}]
		\\
		&
		=
		- \frac{1}{\primalstep}\riemannian[auto]{%
			\parallelTransport{\widetilde p^{(k)}}{p^{(k)}}{\logarithm{\widetilde p^{(k)}} p^{(k)}}
			}{
			\parallelTransport{\widetilde p^{(k)}}{p^{(k)}}{\logarithm{\widetilde p^{(k)}} p}
		}[p^{(k)}]
		\\
		&
		=
		-\frac{1}{\primalstep}\riemannian[auto]{
			-\logarithm{p^{(k)}} \widetilde p^{(k)}
			}{
			\parallelTransport{\widetilde p^{(k)}}{p^{(k)}}{\logarithm{\widetilde p^{(k)}} p}
		}[p^{(k)}]
		\\
		&
		=
		- \riemannian[auto]{
			D\Lambda(m)^*[\bar \xi_n]^\sharp
			}{
			\parallelTransport{p^{(k)}}{m}{\parallelTransport{\widetilde p^{(k)}}{p^{(k)}}{\logarithm{\widetilde p^{(k)}} p}}
		}[m]
		\\
		&
		=
		-\dual[big]{\bar \xi_n}{D\Lambda(m)[\parallelTransport{p^{(k)}}{m}{\parallelTransport{\widetilde p^{(k)}}{p^{(k)}}{\logarithm{\widetilde p^{(k)}} p}}]}
		.
	\end{align*}
	We insert the estimates above into the first inequality in~\eqref{eq:Thm_1_Equivalent_with_subdifferential} to obtain
	\begin{align*}
		F(p)
		&
		\geq
		F(p^{(k+1)})
		+ \frac{1}{2\primalstep} \dist[big]{\widetilde p^{(k)}}{p^{(k+1)}}[2]
		+ \frac{1}{2\primalstep} \dist[big]{p^{(k+1)}}{p}[2]
		+ \frac{1}{2\primalstep} \dist[big]{\widetilde p^{(k)}}{p^{(k)}}[2]
		\\
		&
		\quad
		- \frac{1}{2\primalstep} \dist[big]{p^{(k)}}{p}[2]
		- \dual[big]{%
			\bar\xi_n}{%
			D\Lambda(m)[%
		\parallelTransport{p^{(k)}}{m}{\parallelTransport{\widetilde p^{(k)}}{p^{(k)}} {\logarithm{\widetilde p^{(k)}} p}}]}
		.
	\end{align*}
	Considering now the geodesic triangle~$\Delta = \paren[big](){\widetilde p^{(k)},p^{(k)},p^{(k+1)}}$, we get
	\begin{align*}
		\frac{1}{2\primalstep} \dist[big]{p^{(k+1)}}{\widetilde p^{(k)}}[2]
		&
		\geq
		\frac{1}{2\primalstep} \dist[big]{p^{(k)}}{p^{(k+1)}}[2]
		+ \frac{1}{2\primalstep} \dist[big]{p^{(k)}}{\widetilde p^{(k)}}[2]
		\\
		&
		\quad
		- \frac{1}{\primalstep}
		\riemannian[big]{%
			\logarithm{p^{(k)}} \widetilde p^{(k)}
			}{%
			\logarithm{p^{(k)}} p^{(k+1)}
		}[p^{(k)}]
		,
	\end{align*}
	and, noticing that
	\begin{equation*}
		-\frac{1}{\primalstep}
		\riemannian[big]{\logarithm{p^{(k)}} \widetilde p^{(k)}}{\logarithm{p^{(k)}} p^{(k+1)}}[p^{(k)}]
		=
		\dual[big]{\bar\xi_n}{D\Lambda(m)[\parallelTransport{p^{(k)}}{m}{\logarithm{p^{(k)}} p^{(k+1)}}]}
	\end{equation*}
	holds, we write
	\begin{align*}
		F(p)
		&
		\geq
		F(p^{(k+1)})
		+ \frac{1}{2\primalstep} \dist[big]{p^{(k+1)}}{p}[2]
		- \frac{1}{2\primalstep} \dist[big]{p^{(k)}}{p}[2]
		+ \frac{1}{2\primalstep} \dist[big]{p^{(k)}}{p^{(k+1)}}[2]
		\\
		&
		\quad
		+ \frac{1}{\primalstep} \dist[big]{p^{(k)}}{\widetilde p^{(k)}}[2]
		\\
		&
		\quad
		+ \dual[big]{\bar\xi_n}{%
			D\Lambda(m)[
			\parallelTransport{p^{(k)}}{m}{\logarithm{p^{(k)}} p^{(k+1)}}
			- \parallelTransport{p^{(k)}}{m}{\parallelTransport{\widetilde p^{(k)}}{p^{(k)}}{\logarithm{\widetilde p^{(k)}} p}}
			]
		}
		.
	\end{align*}
	Adding this inequality with the second inequality from~\eqref{eq:Thm_1_Equivalent_with_subdifferential}, we get
	\begin{subequations}
		\label{eq:Main_ineq_in_boundedness}
		\begin{align}
			\MoveEqLeft
			\frac{1}{2\dualstep} \riemanniannorm[big]{\xi_n - \xi^{(k)}_n}[n]^2 + \frac{1}{2\primalstep} \dist[big]{p^{(k)}}{p}[2]
			\nonumber
			\\
			&
			\geq
			\dual[big]{D\Lambda (m) [\logarithm{m} p^{(k+1)}]}{\xi_n} + F(p^{(k+1)}) - G_n^*(\xi_n)
			\nonumber
			\\
			&
			\quad
			- \paren[Big][]{\dual[big]{D\Lambda(m)[\logarithm{m} p]}{\xi^{(k+1)}} + F(p) - G_n^*\paren[big](){\xi^{(k+1)}_n}}
			\nonumber
			\\
			&
			\quad
			+ \frac{1}{2\dualstep} \riemanniannorm[big]{\xi_n - \xi^{(k+1)}_n}[n]^2
			+ \frac{1}{2\dualstep} \riemanniannorm[big]{\xi^{(k)}_n - \xi^{(k+1)}_n}[n]^2
			\nonumber
			\\
			&
			\quad
			+ \frac{1}{2\primalstep} \dist[big]{p^{(k+1)}}{ p}[2]
			+ \frac{1}{2\primalstep} \dist[big]{p^{(k)}}{p^{(k+1)}}[2]
			\nonumber
			\\
			&
			\quad
			+ \frac{1}{\primalstep} \dist[big]{p^{(k)}}{\widetilde p^{(k)}}[2]
			\label{eq:Thm1_Auxiliar_0}
			\\
			&
			\quad
			+ \dual[big]{\bar\xi_n}{D\Lambda(m)[ \parallelTransport{p^{(k)}}{m}{\logarithm{p^{(k)}} p^{(k+1)}}
			- \parallelTransport{p^{(k)}}{m}{\parallelTransport{\widetilde p^{(k)}}{p^{(k)}}{\logarithm{\widetilde p^{(k)}} p}}]}
			\label{eq:Thm1_Auxiliar_a}
			\\
			&
			\quad
			+ \dual[big]{\xi^{(k+1)}_n - \xi_n}{ D\Lambda(m)[\logarithm{m} p^{(k+1)} - \logarithm{m} \bar p ]}
			\label{eq:Thm1_Auxiliar_b}
			\\
			&
			\quad
			- \dual[big]{\xi^{(k+1)}_n - \bar \xi_n}{D\Lambda(m)[\logarithm{m} p^{(k+1)} - \logarithm{m} p]}
			\label{eq:Thm1_Auxiliar_c}
			\\
			&
			\quad
			- \dual[big]{\bar \xi_n}{ D\Lambda(m)[ \logarithm{m} p^{(k+1)} - \logarithm{m} p]}
			.
			\label{eq:Thm1_Auxiliar_d}
		\end{align}
	\end{subequations}
	Recalling now the choice $\bar p = p^{(k+1)}$, the term \eqref{eq:Thm1_Auxiliar_b} vanishes.
	We also insert $\bar \xi_n = 2 \xi^{(k)}_n - \xi_n^{(k-1)}$ and estimate \eqref{eq:Thm1_Auxiliar_c} according to
	\begin{align*}
		\MoveEqLeft
		- \dual[big]{\xi_n^{(k+1)} - \bar \xi_n}{D\Lambda(m)[\logarithm{m} p^{(k+1)} - \logarithm{m} p]}
		\\
		&
		=
		- \dual[big]{%
			\xi_n^{(k+1)} - \xi_n^{(k)} - \paren[big](){\xi_n^{(k)} - \xi_n^{(k-1)}}
			}{
		D\Lambda(m)[\logarithm{m} p^{(k+1)} - \logarithm{m} p]}
		\\
		&
		=
		- \dual[big]{\xi_n^{(k+1)} - \xi_n^{(k)}}{
			D\Lambda(m)[\logarithm{m} p^{(k+1)} - \logarithm{m} p]
		}
		\\
		&
		\quad
		+ \dual[big]{\xi_n^{(k)} - \xi_n^{(k-1)}}{
			D\Lambda(m)[\logarithm{m} p^{(k)} - \logarithm{m} p]
		}
		\\
		&
		\quad
		- \dual[big]{\xi_n^{(k-1)} - \xi_n^{(k)}}{
			D\Lambda(m)[\logarithm{m} p^{(k+1)} - \logarithm{m} p^{(k)}]
		}
		\\
		&
		\geq
		- \dual[big]{\xi_n^{(k+1)} - \xi_n^{(k)}}{
			D\Lambda(m)[\logarithm{m} p^{(k+1)} - \logarithm{m} p]
		}
		\\
		&
		\quad
		+ \dual[big]{\xi_n^{(k)} - \xi_n^{(k-1)}}{
			D\Lambda(m)[\logarithm{m} p^{(k)} - \logarithm{m} p]
		}
		\\
		&
		\quad
		- L \, \riemanniannorm[big]{\xi^{(k)}_n - \xi^{(k-1)}}[n]
		\riemanniannorm[big]{\logarithm{m} p^{(k+1)} - \logarithm{m} p^{(k)}}[m]
		.
	\end{align*}
	Using that~$2ab\leq \alpha a^2+b^2/\alpha$ holds for every~$a,b \ge 0$ and $\alpha>0$, and choosing $\alpha= \frac{\sqrt{\dualstep}}{\sqrt{\primalstep}}$, we get
	\begin{align}
		\MoveEqLeft
		- \dual[big]{\xi_n^{(k+1)} - \bar \xi_n}{
			D\Lambda(m)[\logarithm{m} p^{(k+1)} - \logarithm{m} p]
		}
		\nonumber
		\\
		&
		\geq
		- \dual[big]{\xi_n^{(k+1)} - \xi_n^{(k)}}{
			D\Lambda(m)[\logarithm{m} p^{(k+1)} - \logarithm{m} p]
		}
		\nonumber
		\\
		&
		\quad
		+ \dual[big]{\xi_n^{(k)} - \xi_n^{(k-1)}}{
			D\Lambda(m)[\logarithm{m} p^{(k)} - \logarithm{m} p]
		}
		\nonumber
		\\
		&
		\quad
		- \frac{L\sqrt{\dualstep}}{2\sqrt{\primalstep}} \dist[big]{p^{(k+1)}}{p^{(k)}}[2]
		- \frac{L\sqrt{\primalstep}}{2\sqrt{\dualstep}} \riemanniannorm{\xi^{(k-1)}_n - \xi^{(k)}_n}[n]^2
		,
		\label{eq:Convergence_of_the_linearized_CP_1}
	\end{align}
	where~$L$ is the constant defined in~\eqref{eq:Definition_L}.

	We now make the choice $p = \widehat p$ and notice that the sum of~\eqref{eq:Thm1_Auxiliar_0},~\eqref{eq:Thm1_Auxiliar_a} and~\eqref{eq:Thm1_Auxiliar_d} corresponds to~$C(k)$.
	We also notice that the first two lines on the right hand side of~\eqref{eq:Convergence_of_the_linearized_CP_1} are the primal-dual gap, denoted in the following by $\PDG{k}$.
	Moreover, we set $\xi_n = \widehat \xi_n$.
	With these substitutions in~\eqref{eq:Thm1_Auxiliar_0}--\eqref{eq:Thm1_Auxiliar_d}, we arrive at the estimate
	\begin{align}
		\MoveEqLeft
		\frac{1}{2\dualstep} \riemanniannorm[big]{\widehat \xi_n - \xi^{(k)}_n}[n]^2
		+ \frac{1}{2\primalstep} \dist[big]{p^{(k)}}{\widehat p}[2]
		\nonumber
		\\
		&
		\geq
		\PDG{k} + C(k)
		\nonumber
		\\
		&
		\quad
		+ \paren[auto](){\frac{1}{2\primalstep} - \frac{L\sqrt{\dualstep}}{2\sqrt{\primalstep}}} \dist[big]{p^{(k)}}{p^{(k+1)}}[2]
		+ \frac{1}{2\primalstep} \dist[big]{p^{(k+1)}}{\widehat p}[2]
		\nonumber
		\\
		&
		\quad
		+ \frac{1}{2\dualstep} \riemanniannorm[big]{\widehat \xi_n - \xi^{(k+1)}_n}[n]^2
		+ \frac{1}{2\dualstep} \riemanniannorm[big]{\xi^{(k)}_n - \xi^{(k+1)}_n}[n]^2
		- \frac{L\sqrt{\primalstep}}{2\sqrt{\dualstep}} \riemanniannorm[big]{\xi^{(k-1)}_n - \xi^{(k)}_n}[n]^2
		\nonumber
		\\
		&
		\quad
		- \dual[big]{\xi_n^{(k+1)} - \xi_n^{(k)}}{D\Lambda(m)[\logarithm{m} p^{(k+1)} - \logarithm{m} \widehat p]}
		\nonumber
		\\
		&
		\quad
		+ \dual[big]{\xi_n^{(k)} - \xi_n^{(k-1)}}{D\Lambda(m)[\logarithm{m} p^{(k)} - \logarithm{m} \widehat p]}
		.
		\label{eq:Chain_for_convergence_2}
	\end{align}
	We continue to sum~\eqref{eq:Chain_for_convergence_2} from~$0$ to~$N-1$, where we set~$\xi_n^{(-1)} \coloneqq \xi_n ^{(0)}$ in coherence with the initial choice~$\bar \xi_n^{(0)} = \xi_n^{(0)}$.
	We obtain
	\begin{align}
		\MoveEqLeft
		\frac{1}{2\dualstep} \riemanniannorm[big]{\widehat \xi_n - \xi^{(0)}_n}[n]^2
		+ \frac{1}{2\primalstep} \dist[big]{p^{(0)}}{\widehat p}[2]
		\nonumber
		\\
		&
		\geq
		\sum_{k=0}^{N-1} \PDG{k} + \sum_{k=0}^{N-1} C(k)
		+ \frac{1}{2\dualstep}\riemanniannorm[big]{\widehat \xi_n - \xi^{(N)}_n}[n]^2
		+ \frac{1}{2\primalstep} \dist[big]{p^{(N)}}{\widehat p}[2]
		\nonumber
		\\
		&
		\quad
		+ \paren[auto](){\frac{1}{2\primalstep} - \frac{L\sqrt{\dualstep}}{2\sqrt{\primalstep}}} \sum_{k=1}^{N} \dist[big]{p^{(k)}}{p^{(k-1)}}[2]
		+ \paren[auto](){\frac{1}{2\dualstep} - \frac{L\sqrt{\primalstep}}{2\sqrt{\dualstep}}} \sum_{k=1}^{N-1} \riemanniannorm[big]{\xi^{(k)}_n - \xi^{(k-1)}_n}[n]^2
		\nonumber
		\\
		&
		\quad
		+ \frac{1}{2\dualstep} \riemanniannorm[big]{\xi_n^{(N-1)} - \xi_n^{(N)}}[n]^2
		- \dual[big]{\xi_n^{(N)} - \xi_n^{(N-1)}}{D\Lambda(m)[\logarithm{m} p^{(N)} - \logarithm{m} \widehat p]}
		.
		\label{eq:Chain_for_convergence_3}
	\end{align}
	We further develop the last term in~\eqref{eq:Chain_for_convergence_3} and get
	\begin{align*}
		\MoveEqLeft
		- \dual[big]{\xi_n^{(N)} - \xi_n^{(N-1)}}{D\Lambda(m)[\logarithm{m} p^{(N)} - \logarithm{m} \widehat p]}
		\\
		&
		\geq
		- L \, \riemanniannorm[big]{\xi_n^{(N)} - \xi_n^{(N-1)}}[n] \dist[big]{p^{(N)}}{\widehat p}
		\\
		&
		\geq
		- \frac{L\alpha}{2}\riemanniannorm[big]{\xi_n^{(N)} - \xi_n^{(N-1)}}[n]^2 - \frac{L}{2\alpha} \dist[big]{p^{(N)}}{\widehat p}[2]
		.
	\end{align*}
	Choosing $\alpha = 1/(\dualstep L)$, we conclude
	\begin{align*}
		\MoveEqLeft
		- \dual[big]{\xi_n^{(N)} - \xi_n^{(N-1)}}{D\Lambda(m)[\logarithm{m} p^{(N)} - \logarithm{m} \widehat p]}
		\\
		&
		\geq
		- \frac{1}{2\dualstep}\riemanniannorm[big]{\xi_n^{(N)} - \xi_n^{(N-1)}}[n]^2 - \frac{\dualstep L^2}{2} \dist[big]{p^{(N)}}{\widehat p}[2]
		.
	\end{align*}
	Hence \eqref{eq:Chain_for_convergence_3} becomes
	\begin{align}
		\MoveEqLeft
		\frac{1}{2\dualstep} \riemanniannorm[big]{\widehat \xi_n - \xi^{(0)}_n}[n]^2
		+ \frac{1}{2\primalstep} \dist[big]{p^{(0)}}{\widehat p}[2]
		\nonumber
		\\
		&
		\geq
		\sum_{k=0}^{N-1} \PDG{k} + \sum_{k=0}^{N-1} C(k)
		\nonumber
		\\
		&
		\quad
		+ \frac{1}{2\dualstep} \riemanniannorm[big]{\widehat \xi_n - \xi^{(N)}_n}[n]^2
		+ \paren[auto](){\frac{1}{2\dualstep} - \frac{L\sqrt{\primalstep}}{2\sqrt{\dualstep}}} \sum_{k=1}^{N-1} \riemanniannorm[big]{\xi^{(k)}_n - \xi^{(k-1)}_n}[n]^2
		\nonumber
		\\
		&
		\quad
		+ \paren[auto](){\frac{1}{2\primalstep} - \frac{\dualstep L^2}{2}} \dist[big]{p^{(N)}}{\widehat p}[2]
		+ \paren[auto](){\frac{1}{2\primalstep} - \frac{L\sqrt{\dualstep}}{2\sqrt{\primalstep}}} \sum_{k=1}^{N} \dist[big]{p^{(k)}}{p^{(k-1)}}[2]
		.
		\label{eq:Chain_for_convergence_4}
	\end{align}
	Since~$\paren[big](){\widehat p ,\widehat \xi_n}$ is a saddle-point, the primal-dual gap $\PDG{k}$ is non-negative.
	Moreover, assumption~\eqref{eq:Sufficient_conditions} and the inequality~$\primalstep \dualstep L^2 < 1$ imply that the sequence~$\paren[big]\{\}{\paren[big](){p^{(k)},\xi_n^{(k)}}}$ is bounded, which is the statement~\cref{item:Boundedness_CP}.

	Part~\cref{item:Convergence_to_saddle-point_CP} follows completely analogously to the steps of~\cite[Thm.~1(c)]{ChambollePock:2011:1} adapted to~\eqref{eq:Chain_for_convergence_2}.
\end{proof}

\section{ROF Models on Manifolds}
\label{sec:ROF_Models}

A starting point of the work of~\cite{ChambollePock:2011:1} is the ROF $\ell^2$-TV denoising model~\cite{RudinOsherFatemi:1992:1}, which was generalized to manifolds in~\cite{LellmannStrekalovskiyKoetterCremers:2013:1} for the so-called isotropic and anisotropic cases.
This class of $\ell^2$-TV models can be formulated in the discrete setting as follows:
let $F = (f_{i,j})_{i,j}\in \cM^{d_1\times d_2}$, $d_1,d_2\in\N$ be a manifold-valued image, \ie, each pixel~$f_{i,j}$ takes values on a manifold~$\cM$.
Then the manifold-valued~$\ell^2$-TV energy functional reads as follows:
\begin{equation}
	\label{eq:M-LqTV}
	\cE_q(P)
	\coloneqq
	\frac{1}{2 \alpha}
	\sum_{i,j=1}^{d_1,d_2} \dist{f_{i,j}}{p_{i,j}}[2]
	+ \riemanniannorm{\nabla P}[g,q,1],
	\quad P = (p_{i,j})_{i,j} \in \cM^{d_1\times d_2}
	,
\end{equation}
where~$q \in \{1,2\}$.
The parameter~$\alpha > 0$ balances the relative influence of the data fidelity and the total varation terms in \eqref{eq:M-LqTV}.
Moreover, $\nabla \colon \cM^{d_1 \times d_2} \to \tangentBundle^{d_1 \times d_2 \times 2}$ denotes the generalization of the one-sided finite difference operator, which is defined as
\begin{equation}
	\label{eq:Nabla_operator_for_TV}
	(\nabla P)_{i,j,k}
	=
	\begin{cases}
		0 \in \tangent{p_{i,j}} & \text{ if } i = d_1 \text{ and } k = 1,
		\\
		0 \in \tangent{p_{i,j}} & \text{ if } j = d_2 \text{ and } k = 2,
		\\
		\logarithm{p_{i,j}}{p_{i+1,j}} & \text{ if } i < d_1 \text{ and } k = 1,
		\\
		\logarithm{p_{i,j}}{p_{i,j+1}} & \text{ if } j < d_2 \text{ and } k = 2.
	\end{cases}
\end{equation}
The corresponding norm in \eqref{eq:M-LqTV} is then given by
\begin{equation}
	\label{eq:General_prior_for_LqTV}
	\riemanniannorm{\nabla P}[g,q,1]
	=
	\sum_{i,j=1}^{d_1,d_2} \paren[auto](){\riemanniannorm{(\nabla P)_{i,j,1} }[g]^q + \riemanniannorm{(\nabla P)_{i,j,2} }[g]^q}^{\frac{1}{q}}
	.
\end{equation}

For simplicity of notation we do not explicitly state the base point in the Riemannian metric but denote the norm on~$\tangentBundle$ by $\riemanniannorm{\cdot}[g]$.
Depending on the value of~$q\in\{1,2\}$, we call the energy functional~\eqref{eq:M-LqTV} \emph{isotropic} when~$q=2$ and \emph{anisotropic} for~$q=1$.
Note that previous algorithms like CPPA from~\cite{WeinmannDemaretStorath:2014:1} or Douglas--Rachford~(DR) from~\cite{BergmannPerschSteidl:2016:1} are only able to tackle the anisotropic case $q=1$ due to a missing closed form of the proximal map for the isotropic TV summands.
A relaxed version of the isotropic case can be computed using the half-quadratic minimization from~\cite{BergmannChanHielscherPerschSteidl:2016:1}.
Looking at the optimality conditions of the isotropic or anisotropic energy functional, the authors in~\cite{BergmannTenbrinck:2018:1} derived and solved the corresponding $q$-Laplace equation.
This can be generalized even to all cases $q > 0$.

The minimization of~\eqref{eq:M-LqTV} fits into the setting of the model problem~\eqref{eq:General_problem_for_PD_Chambolle-Pock}.
Indeed, $\cM$ is replaced by $\cM^{d_1\times d_2}$, $\cN = \tangentBundle^{d_1\times d_2\times 2}$, $\Fidelity$ is given by the first term in~\eqref{eq:M-LqTV}, and we set $\Lambda = \nabla$ and $\Prior_{q} = \riemanniannorm{\cdot}[g,q,1]$.
The data fidelity term~$F$ clearly fulfills the assumptions stated in the beginning of \cref{sec:Primal-Dual_on_manifolds}, since the squared Riemannian distance function is geodesically convex on any strongly convex set $\cC\subset\cM$.
In particular, when $\cM$ is a Hadamard manifold, then $F$ is geodesically convex on all of~$\cM$.

While the properness and continuity of the pullback $g_n(Y) = G(\exponential{n}Y)$ are obvious, its convexity is investigated in the following.
\begin{proposition}\label{prop:conv:tn-norm}
	Suppose that $\cM$ is a Hadamard manifold and $d_1, d_2 \in \N$.
	Consider $\cM^{d_1\times d_2}$ and $\cN = \tangentBundle^{d_1 \times d_2 \times 2}$ and $G = \riemanniannorm{\cdot}[g,q,1]$ with $q \in [1,\infty)$.
	For arbitrary $n \in \cN$, define the pullback $g_{n}\colon\tangent{n}[\cN] \to \R$ by $g_n(Y) = G(\exp_nY)$.
	Then $g_n$ is a convex function on $\tangent{n}[\cN]$.
\end{proposition}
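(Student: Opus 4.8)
The plan is to reduce the claim, by successive structural simplifications, to the convexity of a norm composed with an affine map. First I would use that $\cN = \tangentBundle^{d_1\times d_2\times 2}$ is a product manifold, so that both the exponential map $\exp_n$ and the tangent space $\tangent{n}[\cN]$ split factorwise: writing $Y = (Y_{i,j,k})$ and $n = (n_{i,j,k})$, each component $(\exp_n Y)_{i,j,k}$ is obtained from the exponential map on a single copy of the tangent bundle and depends only on $Y_{i,j,k}$. Since $G$ from \eqref{eq:General_prior_for_LqTV} is a finite sum over the pixel indices $(i,j)$, and finite sums of convex functions are convex, it suffices to prove that each summand
\begin{equation*}
	Y \mapsto \left( \riemanniannorm{(\exp_n Y)_{i,j,1} }[g]^q + \riemanniannorm{(\exp_n Y)_{i,j,2} }[g]^q \right)^{1/q}
\end{equation*}
is convex, and by the product splitting this summand depends only on the two factors indexed by $(i,j,1)$ and $(i,j,2)$.

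Next I would separate the outer $\ell^q$-aggregation from the inner Riemannian norms. The outer map $\phi(a,b) = (a^q + b^q)^{1/q}$ is the $\ell^q$-norm on $\R^2$ for $q \in [1,\infty)$, hence convex, and it is nondecreasing in each argument on $[0,\infty)^2$. Therefore, by the standard composition rule for a convex function that is nondecreasing in each coordinate precomposed with convex inner functions, the summand above is convex as soon as each inner function
\begin{equation*}
	h_{i,j,k}(Y) \coloneqq \riemanniannorm{(\exp_n Y)_{i,j,k} }[g]
\end{equation*}
is convex and nonnegative on $\tangent{n}[\cN]$. Nonnegativity is immediate, so the whole proposition is reduced to the single-factor statement that, on one copy of the tangent bundle, the pullback through the exponential map of the fibrewise Riemannian norm is convex.

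This single-factor claim is the heart of the argument, and I expect it to be the main obstacle, since it requires describing $\exp_n$ on $\tangentBundle$ precisely enough to control how a point $(p,V) = \exp_{n_{i,j,k}} Y_{i,j,k}$ and its norm $\riemanniannorm{V}[p]$ depend on $Y_{i,j,k}$. The key simplification I would exploit is that $G$ is insensitive to the base-point (horizontal) part of the motion and reads off only the norm of the vector (vertical) component; since each fibre of $\tangentBundle$ is a flat Hilbert space on which the exponential map acts by translation, the vector component depends affinely on the vertical part of $Y_{i,j,k}$, so that $h_{i,j,k}$ is a Riemannian norm precomposed with an affine map and is therefore convex --- in direct analogy with \cref{example:convexity_on_the_tangent_space}, where the pullback of a distance function turned out to be exactly a norm. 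In the situation relevant to the algorithm, where the base points are fixed (e.g.\ $n = \Lambda(m)$ after linearization), this is even cleaner: $\cN$ is then a product of the Hilbert fibres, $\exp_n$ reduces to a translation, and $g_n = G(n + \cdot)$ is a mixed $\ell^q$-$\ell^1$ norm composed with a shift, whence convexity is immediate. Assembling the three reductions --- the affine/norm structure on each factor, the $\ell^q$ composition rule, and the sum over pixels --- then yields convexity of $g_n$ on all of $\tangent{n}[\cN]$.
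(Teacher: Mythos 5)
Your overall strategy is in essence the same as the paper's: both arguments come down to showing that $g_n$ is a fixed norm evaluated at an affine function of $Y$, namely $g_n(Y) = \riemanniannorm{n_X + Y_X}[g,q,1]$ where $Y_X$ is the vertical part of $Y$ and $n_X$ the fibre component of $n$, after which convexity is immediate. Your additional reductions --- splitting the sum over pixels and invoking the composition rule for the $\ell^q$-aggregation of nonnegative convex functions --- are correct but not needed: the paper simply uses that $\riemanniannorm{\cdot}[g,q,1]$ is a norm on the fibre, hence convex, in a single step.

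The gap lies in the justification of the single-factor claim, which you yourself flag as the heart of the matter. You derive the affine dependence of the vector component of $\exponential{n}{Y}$ from the statement that each fibre of $\tangentBundle$ is a flat Hilbert space on which the exponential map acts by translation. That observation only covers tangent vectors $Y$ that are \emph{purely vertical}: a geodesic of $\tangentBundle$ with mixed initial velocity does not stay inside a fibre, so fibre geometry alone says nothing about it. What is actually needed --- and what the paper invokes as a property of the double tangent bundle --- is the explicit form of the exponential map for a general $Y = (Y_p, Y_X)$,
\begin{equation*}
	\exponential{n}{Y}
	=
	\paren[auto](){n'_p,\, \parallelTransport{n_p}{n'_p}{(n_X + Y_X)}}
	\quad\text{with } n'_p = \exponential{n_p}{Y_p},
\end{equation*}
\ie, the base point moves along the geodesic determined by the horizontal part, while the fibre element $n_X + Y_X$ is parallel transported along it. Only with this formula, and crucially with the fact that parallel transport is a linear \emph{isometry}, can you conclude that
\begin{equation*}
	\riemanniannorm[big]{\parallelTransport{n_p}{n'_p}{(n_X + Y_X)}}[n'_p]
	=
	\riemanniannorm{n_X + Y_X}[n_p]
\end{equation*}
is independent of the horizontal part $Y_p$; without the isometry, the norms of vectors sitting in different fibres cannot even be compared, so your assertion that $G$ is ``insensitive to the horizontal part of the motion'' is left unsupported. (Your closing remark that the fixed-base-point situation is ``even cleaner'' does not repair this: the proposition requires convexity along all of $\tangent{n}[\cN]$, horizontal directions included.) Once this structural fact is supplied, your argument goes through, and it is then precisely the paper's proof: write out $\exponential{n}((1-t)Y + tZ)$, use convexity of the norm in the fibre over $n'_p$, and re-base the two resulting terms at $\exponential{n_p}{Y_p}$ and $\exponential{n_p}{Z_p}$ by isometry of parallel transport so as to recognize $g_n(Y)$ and $g_n(Z)$.
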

\begin{proof}
	Notice first that, since $\cM$ is Hadamard, $\cM^{d_1\times d_2}$ and $\cN$ are Hadamard as well.
	Consequently, $g_n$ is defined on all of $\tangent{n}[\cN]$.
	We are using the index $\cdot_p$ to denote points in $\cM^{d_1\times d_2}$ and the index $\cdot_X$ to denote tangent vectors.
	In particular, we denote the base point as $n=(n_p,n_X)\in\cN$.
	Let $Y=(Y_p,Y_X), Z=(Z_p,Z_X)\in\tangent{n}[\cN]$ and $t \in [0,1]$.
	Finally, we set $n' = (n'_p,n'_X) = \exponential{n}((1-t)Y + tZ)$.
	Notice that in view of the properties of the double tangent bundle as a Riemannian manifold, we have
	\begin{equation*}
		n' = \paren[auto](){n'_p, \parallelTransport{n_p}{n'_p}(n_X + (1-t) Y_X + t \, Z_X)}.
	\end{equation*}
	Therefore we obtain
	\begin{align*}
		\MoveEqLeft
		g_n((1-t)Y + tZ)
		\\
		&
		=
		G\paren[auto](){\paren[big](){n'_p, \parallelTransport{n_p}{n'_p}(n_X + (1-t) Y_X + t \, Z_X)}}
		&
		&
		\text{by definition of $g_n$}
		\\
		&
		=
		\norm{\parallelTransport{n_p}{n'_p}((1-t) (n_X + Y_X) + t \, (n_X + Z_X))}_{g,q,1}
		&
		&
		\text{by definition of $G$}
		\\
		&
		\le
		(1-t) \norm{\parallelTransport{n_p}{n'_p}(n_X + Y_X)}_{g,q,1}
		\\
		&
		\quad
		+ t \, \norm{\parallelTransport{n_p}{n'_p}(n_X + Z_X)}_{g,q,1}
		&
		&
		\text{by convexity of $G$}
		.
	\end{align*}
	Exploiting that parallel transport is an isometry, we transport the term inside the first norm to $n''_p = \exponential{n_p}Y_p$ and the term inside the second norm to $n'''_p = \exponential{n_p}Z_p$ to obtain
	\begin{align*}
		\MoveEqLeft
		g_n((1-t)Y + tZ)
		\\
		&
		\le
		(1-t) \norm{\parallelTransport{n_p}{n''_p}(n_X + Y_X)}_{g,q,1}
		+ t \, \norm{\parallelTransport{n_p}{n'''_p}(n_X + Z_X)}_{g,q,1}
		\\
		&
		=
		(1-t) G\paren[auto](){\paren[big](){n''_p, \parallelTransport{n_p}{n''_p}(n_X + Y_X)}}
		+ t \, G\paren[auto](){\paren[big](){n'''_p, \parallelTransport{n_p}{n'''_p}(n_X + Z_X)}}
		\\
		&
		=
		(1-t) \, g_n(Y) + t \, g_n(Z)
		.
	\end{align*}
\end{proof}
We apply~\cref{alg:DualOverrelax-lRCP} to solve the linearized saddle-point problem~\eqref{eq:Linearized_manifold_saddle-point_reprsentation_for_PD_Chambolle-Pock}.
This procedure will yield an approximate minimizer of~\eqref{eq:M-LqTV}.
To this end we require both the Fenchel conjugate and the proximal map of $\Prior$.
Its Fenchel dual can be stated using the dual norms, \ie, $\riemanniannorm{\cdot}[g,q^*,\infty]$ similar to Thm.~2 of~\cite{DuranMoellerSbertCremers:2016:1}, where~$q^*\in\R$ is the dual exponent of~$q$.
Let
\begin{equation*}
	B_{q^*}
	\coloneqq
	\setDef[auto]{X}{\riemanniannorm{X}[g,q^*,\infty] \leq 1}
\end{equation*}
denote the $1$-norm ball of the dual norm and
\begin{equation*}
	\iota_{B}(x)
	\coloneqq
	\begin{cases}
		0 & \text{if } x \in B,
		\\
		\infty & \text{otherwise},
	\end{cases}
\end{equation*}
the indicator function of the set $B$.
Then the Fenchel dual functions in the two cases of our main interest ($q = 1$ and $q = 2$) are
\begin{align*}
	\Prior_{2}^*(\Xi)
	=
	\iota_{B_{2}}(\Xi)
	\quad \text{and} \quad
	\Prior_{\infty}^*(\Xi)
	=
	\iota_{B_{\infty}}(\Xi).
\end{align*}
The corresponding proximal maps read as follows:
\begin{align*}
	\prox{\dualstep\Prior^*_{2}}{\Xi}
	&
	=
	\paren[Big](){{\max \paren[auto]\{\}{1, \norm[big]{\riemanniannorm{\Xi_{i,j,:}}[g]}_2} }^{-1} \Xi_{i,j,k}}_{i,j,k}
	\\
	\text{and}
	\quad
	\prox{\dualstep\Prior^*_{\infty}}{\Xi}
	&
	=
	\paren[Big](){{\max \paren[auto]\{\}{1, \riemanniannorm{\Xi_{i,j,k}}[g]}}^{-1}\Xi_{i,j,k} }_{i,j,k}
	.
\end{align*}

Finally, to derive the adjoint of $D\Lambda(m)$, let~$P\in\cM^{d_1\times d_2}$ and~$X \in \tangent{P}^{d_1\times d_2}$.
Applying the chain rule, it is not difficult to prove that
\begin{equation}
	\label{eq:Differential_nabla}
	\paren[big](){D\nabla (P)[X]}_{i,j,k}
	=
	D_1 \logarithm{p_{i,j}}{p_{i,j+e_k}}[X_{i,j}] + D_2 \logarithm{p_{i,j}}{p_{i,j + e_k}}[X_{i,j+e_k}]
\end{equation}
with the obvious modifications at the boundary.
In the above formula, $e_k$ represents either the vector $(0,1)$ or $(1,0)$ used to reach either the neighbor to the right ($k = 1$) or below ($k = 2$).
The symbols $D_1$ and $D_2$ represent the differentiation of the logarithmic map w.r.t.\ the base point and its argument, respectively.
We notice that~$D_1 \logarithm{\,\cdot\,}{p_{i,j +e_k}}$ and~$D_2 \logarithm{p_{i,j}}{\,\cdot\,}$ can be computed by an application of Jacobi fields; see for example~\cite[Lem.~4.1~(ii)~and~(iii)]{BergmannFitschenPerschSteidl:2018:1}.

With $(D\nabla)(\,\cdot\,)[\,\cdot\,] \colon \tangentBundle^{d_1\times d_2} \to \tangentBundle[\cN]$ given by Jacobi fields, its adjoint can be computed using the so-called adjoint Jacobi fields, see \eg, \cite[Sect.~4.2]{BergmannGousenbourger:2018:2}.
Defining~$N_{i,j}$ to be the set of neighbors of the pixel~$p_{i,j}$, for every~$X\in\tangent{P}^{d_1\times d_2}$ and $\eta\in\cotangent{\nabla P}[\cN]$ we have
\begin{align*}
	\MoveEqLeft
	\dual[big]{D\nabla(P)[X]}{\eta}
	\\
	&
	=
	\sum_{i,j,k} \dual[big]{(D\nabla (P)[X])_{i,j,k}}{\eta_{i,j,k}}
	\\
	&
	=
	\sum_{i,j} \sum_k \dual[big]{D_1\logarithm{p_{i,j}}{p_{i,j+e_k}}[X_{i,j}]}{\eta_{i,j,k}} + \sum_k \dual[big]{D_2\logarithm{p_{i,j}}{p_{i,j+e_k}}[X_{i,j+e_k}]}{\eta_{i,j,k}}
	\\
	&
	=
	\sum_{i,j} \sum_k \dual[big]{X_{i,j}}{D_1^*\logarithm{p_{i,j}}{p_{i,j+e_k}}[\eta_{i,j,k}]} + \sum_k \dual[big]{X_{i,j+e_k}}{D_2^*\logarithm{p_{i,j}}{p_{i,j+e_k}}[\eta_{i,j,k}]}
	\\
	&
	=
	\sum_{i,j} \dual[Big]{X_{i,j}}{\sum_k D_1^*\logarithm{p_{i,j}}{p_{i,j+e_k}} [\eta_{i,j,k}] + \sum_{(i^\prime,j^\prime)\in N_{i,j}} D_2^*\logarithm{p_{i^\prime j^\prime}}{p_{i,j}} [\eta_{i^\prime j^\prime k}]}
	\\
	&
	=
	\sum_{i,j} \dual[big]{X_{i,j}}{(D^*\nabla(P)[\eta])_{i,j}}
	,
\end{align*}
which leads to the component-wise entries in the linearized adjoint
\begin{equation}
	\label{eq:Definition_of_the_linearized_adjoint}
	\paren[big](){D^*\nabla(P)[\eta]}_{i,j}
	=
	\sum_k D_1^*\logarithm{p_{i,j}}{p_{i,j+e_k}} [\eta_{i,j,k}]
	+
	\sum_{(i^\prime,j^\prime)\in N_{i,j}} D_2^*\logarithm{p_{i^\prime j^\prime}}{p_{i,j}} [\eta_{i^\prime j^\prime k}]
	.
\end{equation}
We mention that $D_1^*\logarithm{\,\cdot\,}{p_{i,j+e_k}}$ and $D_2^* \logarithm{p_{i,j}}{\,\cdot\,}$ can also be found in~\cite[Sect.~4]{BergmannFitschenPerschSteidl:2018:1}.

\section{Numerical Experiments}
\label{sec:Numerical_experiments}

The numerical experiments are implemented in the toolbox \manoptjl\footnote{Available at \url{http://www.manoptjl.org}, following the same philosophy as the \matlab version available at \url{https://manopt.org}, see also~\cite{BoumalMishraAbsilSepulchre:2014:1}.} (\cite{Bergmann:2019:1}) in Julia\footnote{\url{https://julialang.org}}.
They were run on a MacBook~Pro, 2.5~Ghz Intel Core~i7, 16~GB RAM, with Julia~1.1.
All our examples are based on the linearized saddle-point formulation~\eqref{eq:Linearized_manifold_saddle-point_reprsentation_for_PD_Chambolle-Pock} for $\ell^2$-TV, solved with \cref{alg:DualOverrelax-lRCP}.

\subsection{A Signal with Known Minimizer}

The first example uses signal data $\cM^{d_1}$ instead of an image, where the data space is $\cM = \S^2$, the two-dimensional sphere with the round sphere Riemannian metric.
This gives us the opportunity to consider the same problem also on the embedding manifold ${(\R^3)}^{d_1}$ in order to illustrate the difference between the manifold-valued and Euclidean settings.
We construct the data ${(f_i)}_i$ such that the unique minimizer of~\eqref{eq:M-LqTV} is known in closed form.
Therefore a second purpose of this problem is to compare the numerical solution obtained by \cref{alg:DualOverrelax-lRCP}, \ie, an approximate saddle-point of the \emph{linearized} problem~\eqref{eq:Linearized_manifold_saddle-point_reprsentation_for_PD_Chambolle-Pock}, to the solution of the original saddle-point problem~\eqref{eq:Manifold_saddle-point_representation_for_PD_Chambolle-Pock}.
Third, we wish to explore how the value~$C(k)$ from~\eqref{eq:Sufficient_conditions} behaves numerically.

The piecewise constant signal is given by
\begin{equation*}
	f \in \cM^{30},
	\quad
	f_i =
	\begin{cases}
		p_1 & \text{if } i \leq 15,
		\\
		p_2 & \text{if } i > 15,
	\end{cases}
\end{equation*}
for two values~$p_1,p_2\in\cM$ specified below.

Further, since $d_2 = 1$, the isotropic and anisotropic models~\eqref{eq:M-LqTV} coincide.
The exact minimizer $\widehat p$ of~\eqref{eq:M-LqTV} is piecewise constant with the same structure as the data~$f$.
Its values are $\widehat p_1 = \geodesic<a>{p_1}{p_2}(\delta)$ and $\widehat p_2 = \geodesic<a>{p_2}{p_1}(\delta)$ where $\delta = \min \paren[big]\{\}{\frac{\alpha}{15 \dist{p_1}{p_2}}, \frac{1}{2}}$.
Notice that the notion of geodesics are different for both manifolds under consideration, and thus the exact minimizers~$\widehat p_{\R^3}$ and $\widehat p_{\S^2}$ are different.

In the following we use $\alpha = 5$ and $p_1 = \frac{1}{\sqrt{2}}(1,1,0)^\transp$ and $p_2 = \frac{1}{\sqrt{2}}(1,-1,0)^\transp$.
The data~$f$ is shown in~\cref{subfig:TVSignal:R3Orig}.
\begin{figure}[htb]\centering
	\begin{subfigure}{.49\textwidth}\centering
		\includegraphics[width=0.98\textwidth]{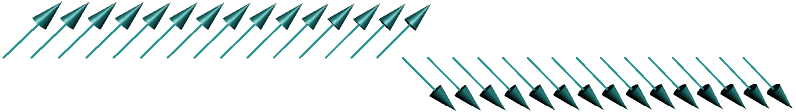}
		\caption{Signal~$f$ of unit vectors.}
		\label{subfig:TVSignal:R3Orig}
	\end{subfigure}
	\\
	\begin{subfigure}{.49\textwidth}\centering
		\includegraphics[width=0.98\textwidth]{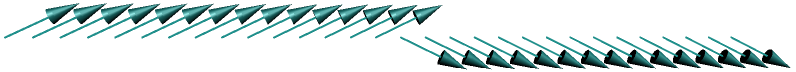}
		\caption{Minimizer with values in $\cM=\S^2$.}
		\label{subfig:TVSignal:S2Minimizer}
	\end{subfigure}
	\begin{subfigure}{.49\textwidth}\centering
		\includegraphics[width=0.98\textwidth]{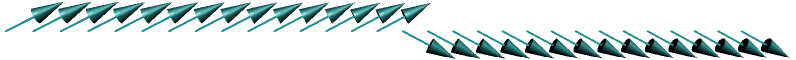}
		\caption{Minimizer with values in $\cM=\R^3$.}
		\label{subfig:TVSignal:R3Minimizer}
	\end{subfigure}
	\\
	\begin{subfigure}{.49\textwidth}\centering
		\includegraphics[width=0.98\textwidth]{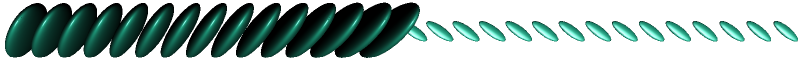}
		\caption{Signal of $\cP_+(3)$ matrices.}
		\label{subfig:TVSignal:SPDOrig}
	\end{subfigure}
	\begin{subfigure}{.49\textwidth}\centering
		\includegraphics[width=0.98\textwidth]{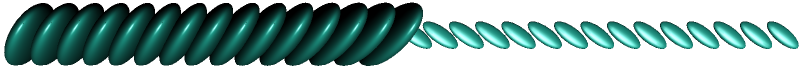}
		\caption{Minimizer on $\cM=\cP_+(3)$.}
		\label{subfig:TVSignal:SPDMinizer}
	\end{subfigure}
	\caption{Computing the minimizer of the manifold-valued $\ell^2$-TV model for a signal of unit vectors shown in~(\subref{subfig:TVSignal:R3Orig}) with respect to both manifolds $\R^3$ and $\S^2$ with~$\alpha=5$: (\subref{subfig:TVSignal:S2Minimizer}) on~$(\S^2)^{30}$ and~(\subref{subfig:TVSignal:R3Minimizer}) on~$(\R^3)^{30}$. The known effect, loss of contrast is different for both cases, since on $\S^2$ the vector remain of unit length. The same effect can be seen for a signal of spd matrices, \ie, $\cP_+(3)$; see~(\subref{subfig:TVSignal:SPDOrig}) and~(\subref{subfig:TVSignal:SPDMinizer}).}
	\label{fig:S2R3Signal}
\end{figure}

We applied the linearized Riemannian Chambolle--Pock \cref{alg:DualOverrelax-lRCP} with relaxation parameter~$\theta = 1$ on the dual variable as well as~$\primalstep = \dualstep = \frac{1}{2}$, and $\gamma = 0$, \ie, without acceleration, as well as initial guesses~$p^{(0)} = f$ and $\xi^{(0)}_n$ as the zero vector.
The stopping criterion was set to~$500$ iterations to compare run times on different manifolds. As linearization point~$m$ we use the mean of the data, which is just~$m = \geodesic<a>{p_1}{p_2}(\frac{1}{2})$.
We further set $n = \Lambda(m)$ for the base point of the Fenchel dual of $G$.
For the Euclidean case $\cM = \R^3$, we obtain a shifted version of the original Chambolle--Pock algorithm, since $m \neq 0$.

While the algorithm on~$\cM = \S^2$ takes about~$0.85$~seconds, the Euclidean algorithm takes about~$0.44$~seconds for the same number of iterations, which is most likely due to the exponential and logarithmic maps as well as the parallel transport on~$\S^2$, which involve sines and cosines.
The results obtained by the Euclidean algorithm is~$2.18\cdot 10^{-12}$ away in terms of the Euclidean norm from the analytical minimizer~$\widehat p_{\R^3}$.
Notice that the convergence of the Euclidean algorithm is covered by the theory in~\cite{ChambollePock:2011:1}.
Moreover, notice that in this setting, $\Lambda$ is a linear map between vector spaces.
During the iterations, we confirmed that the value of~$C(k)$ is numerically zero (within $\pm5.55\cdot10^{-17}$), as expected from \cref{rem:Interpretation_of_C}.

Although \cref{alg:DualOverrelax-lRCP} on $\cM = \S^2$ is based on the \emph{linearized} saddle-point problem~\eqref{eq:Linearized_manifold_saddle-point_reprsentation_for_PD_Chambolle-Pock} instead of~\eqref{eq:Manifold_saddle-point_representation_for_PD_Chambolle-Pock}, we observed that it converges to the exact minimizer~$\widehat p_{\S^2}$ of~\eqref{eq:M-LqTV}.
Therefore it is meaningful to plug in $\widehat p_{\S^2}$ into the formula~\eqref{eq:Sufficient_conditions} to evaluate $C(k)$ numerically.
The numerical values observed throughout the 500~iterations are in the interval $[-4.0\cdot10^{-13}, 4.0\cdot10^{-9}]$.
We interpret this as confirmation that $C(k)$ is non-negative in this case.
However, even with this observation the convergence of \cref{alg:DualOverrelax-lRCP} is not covered by \cref{thm:Convergence_of_the_linearized_CP} since $\S^2$ is not a Hadamard manifold.
Quite to the contrary, it has constant \emph{positive} sectional curvature.

The results are shown in~\cref{subfig:TVSignal:S2Minimizer} and~\cref{subfig:TVSignal:R3Minimizer}, respectively.
They illustrate the capability for preservation of edges, yet also a loss of contrast and reduction of jump heights well known for $\ell^2$-TV problems.
This leads to shorter vectors in $\widehat p_{\R^3}$, while, of course, their unit length is preserved in $\widehat p_{\S^2}$.

We also constructed a similar signal on~$\cM = \cP_+(3)$, the manifold of symmetric positive definite (SPD) matrices with affine-invariant metric; see~\cite{PennecFillardAyache:2006:1}.
This is a Hadamard manifold with non-constant curvature.
Let $I\in\R^{3\times3}$ denote the unit matrix and
\makeatletter
\IfSubStr{\@currentclass}{svjour3.cls}{%
	\begin{equation*}
		p_1 = \exponential[auto]{I}{\frac{2}{\riemanniannorm{X}[I]}X},
		\quad
		p_2 = \exponential[auto]{I}{-\frac{2}{\riemanniannorm{X}[I]}X}
	\end{equation*}
	with
	$X = \frac{1}{2} \begin{psmallmatrix}1&2&2\\2&2&0\\2&0&6\end{psmallmatrix} \in \cT_I\cP_+(3)$.
	}{%
	\begin{equation*}
		p_1 = \exponential[auto]{I}{\frac{2}{\riemanniannorm{X}[I]}X},
		\quad
		p_2 = \exponential[auto]{I}{-\frac{2}{\riemanniannorm{X}[I]}X}
		\quad
		\text{with }
		X = \frac{1}{2} \begin{pmatrix}1&2&2\\2&2&0\\2&0&6\end{pmatrix}
		\in \cT_I\cP_+(3).
	\end{equation*}
}
\makeatother
In this case, the run time is~$5.94$~seconds, which is due to matrix exponentials and logarithms as well as singular value decompositions that need to be computed.
Here, $C(k)$ turns out to be numerically zero (within $\pm8\cdot10^{-15}$) and the distance to the analytical minimizer~$\widehat p_{\cP_+(3)}$ is~$1.08\cdot 10^{-12}$.
The original data~$f$ and the result~$\widehat p_{\cP_+(3)}$ (again with a loss of contrast as expected) are shown in~\cref{subfig:TVSignal:SPDOrig} and~\cref{subfig:TVSignal:SPDMinizer}, respectively.

\subsection{A Comparison of Algorithms}

As a second example we compare \cref{alg:DualOverrelax-lRCP} to the cyclic proximal point algorithm (CPPA) from~\cite{Bacak:2014:1}, which was first applied to $\ell^2$-TV problems in~\cite{WeinmannDemaretStorath:2014:1}.
It is known to be a robust but generally slow method.
We also compare the proposed method with the parallel Douglas--Rachford algorithm (PDRA), which was introduced in~\cite{BergmannPerschSteidl:2016:1}.

As an example, we use the anisotropic $\ell^2$-TV model, \ie, \eqref{eq:M-LqTV} with $q = 1$, on images of size $32 \times 32$ with values in the manifold of $3 \times 3$~SPD matrices $\cP_+(3)$ as in the previous subsection.
The original data is shown in \cref{subfig:SPDImg:Orig}.
No exact solution is known for this example.
We use a regularization parameter of $\alpha = 6$.
To generate a reference solution we allowed the CPPA with step size $\lambda_k = \frac{4}{k}$ to run for~$\num{4000}$~iterations.
This required $1235.18$~seconds and it yields a value of the objective function~\eqref{eq:M-LqTV} of approximately~$38.7370$, see the bottom gray line in \cref{subfig:SPDImg:Cost}.
The result is shown in~\cref{subfig:SPDImg:Result}.

We compare CPPA to PDRA as well as to our \cref{alg:DualOverrelax-lRCP}, using the value of the cost function and the run time as criteria.
The PDRA was run with parameters~$\eta = 0.58$, $\lambda = 0.93$, which where used by~\cite{BergmannPerschSteidl:2016:1} for a similar example.
It took $379.7$~seconds to perform $122$~iterations in order to reach the same value of the cost function as obtained by CPPA.
The main bottleneck is the approximate evaluation of the involved mean, which has to be computed in every iteration.
Here we performed 20~gradient descent steps for this purpose.

For \cref{alg:DualOverrelax-lRCP} we set~$\primalstep = \dualstep = 0.4$ and~$\gamma = 0.2$.
We choose the base point $m\in\cP_+(3)^{32\times 32}$ to be the constant image of unit matrices so that $n=\Lambda(m)$ consists of zero matrices. We initialize the algorithm with $p^{(0)} = f$ and $\xi^{(0)}_n$ as the zero vector.
Our algorithm stops after $113$ iterations, which take~$96.20$~seconds, when the value of~\eqref{eq:M-LqTV} was below the value obtained by the CPPA.
While the CPPA requires about half a second per iteration, our method requires a little less than a second per iteration, but it also requires only a fraction of the iteration count of CPPA.
The behavior of the cost function is shown in~\cref{subfig:SPDImg:Cost}, where the horizontal axis (iteration number) is shown in log scale, since the \eqq{tail} of CPPA is quite long.

\begin{figure}[htb]\centering
	\ifthenelse{\boolean{withpictures}}{%
		\begin{subfigure}{.49\textwidth}\centering
			\includegraphics[width=0.66\textwidth]{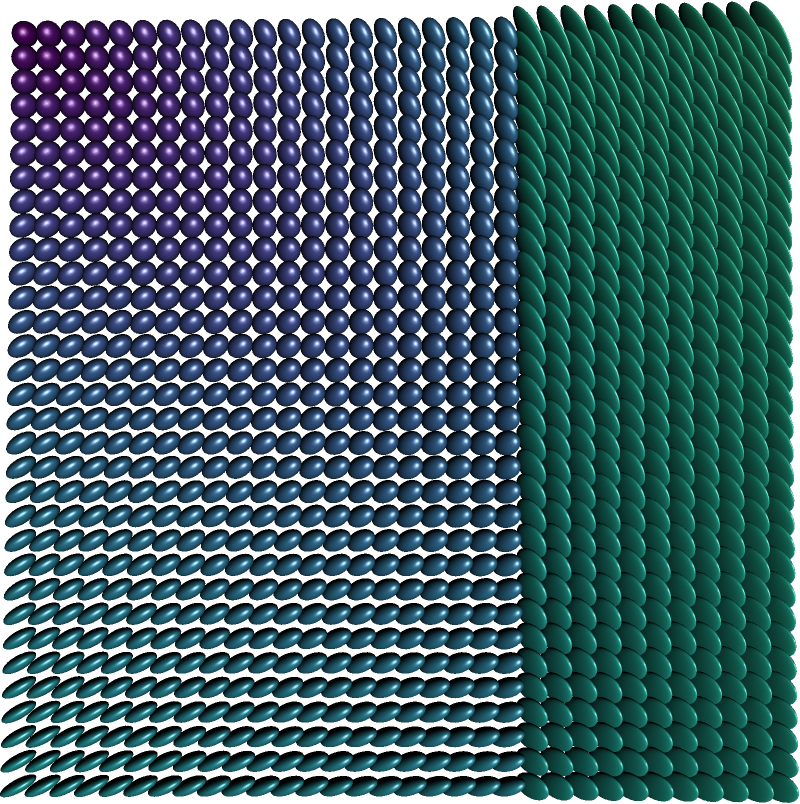}
			\caption{Original Data.}
			\label{subfig:SPDImg:Orig}
		\end{subfigure}
		\begin{subfigure}{.49\textwidth}\centering
			\includegraphics[width=0.66\textwidth]{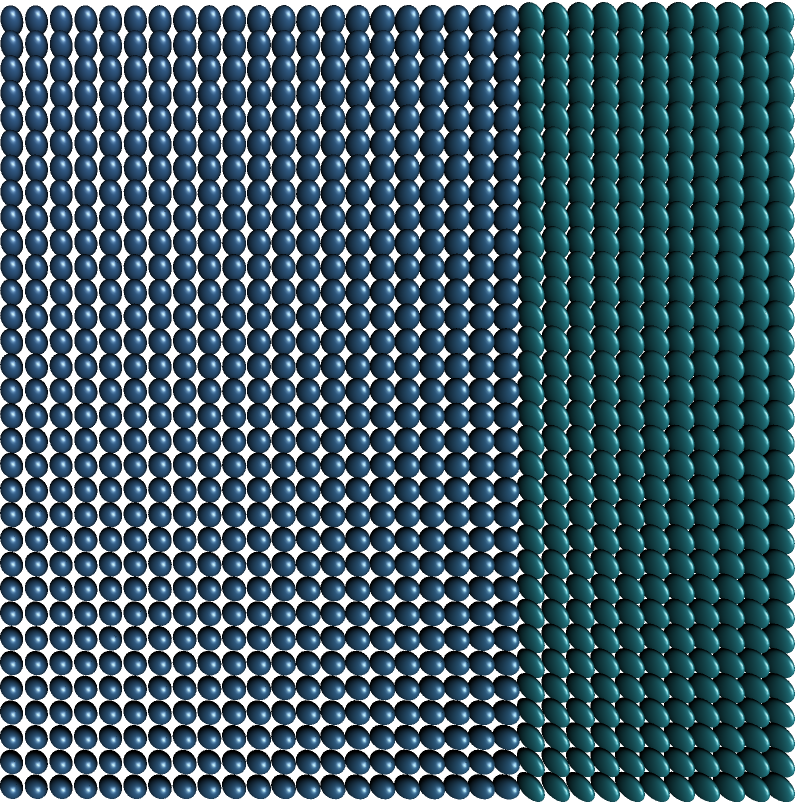}
			\caption{Minimizer.}
			\label{subfig:SPDImg:Result}
		\end{subfigure}
		\newline
		\begin{subfigure}{\textwidth}\centering
			\pgfplotstableread[col sep = comma]{Data/SPD/ImageCPPA-Cost.csv}\spdcppaData
			\pgfplotstableread[col sep = comma]{Data/SPD/ImageCP-Cost.csv}\spdcpData
			\pgfplotstableread[col sep = comma]{Data/SPD/ImageDR-Cost.csv}\spddrData
			\begin{tikzpicture}
				\begin{axis}[
					legend style={font=\scriptsize},
					xmode=log,
					log ticks with fixed point,
					width=.9\textwidth,
					height=.3\textwidth,
					xlabel={Iterations},
					xmin=1, xmax=4000,
					]
					\addplot[TolVibrantGray, mark=none, forget plot] coordinates { 
							(1, 38.73702639431868)
							(4000, 38.73702639431868)
						};
					\addplot[TolVibrantTeal, mark=none] table[x index = {0}, y index = {1}]{\spdcppaData};
					\addplot[TolVibrantBlue, mark=none] table[x index = {0}, y index = {1}]{\spddrData};
					\addplot[TolVibrantOrange, mark=none] table[x index = {0}, y index = {1}]{\spdcpData};
					\legend{{CPPA}, {PDRA}, {lRCPA}}
				\end{axis}
			\end{tikzpicture}
			\caption{Cost function.}
			\label{subfig:SPDImg:Cost}
		\end{subfigure}
	}{}
	\caption{Development of the three algorithms Cyclic Proximal Point (CPPA),
		parallel Douglas--Rachford (PDRA) as well as the linearized Riemannian
		Chambolle--Pock~\cref{alg:DualOverrelax-lRCP} (lRCPA) starting all from the original data
		in~(\subref{subfig:SPDImg:Orig}) reaching the final value (image)
		in~(\subref{subfig:SPDImg:Result}) is shown in~(\subref{subfig:SPDImg:Cost}),
	where the iterations on the x-axis are in log-scale.}
\end{figure}

\begin{figure}[htb]\centering
	\ifthenelse{\boolean{withpictures}}{%
		\begin{subfigure}{.32\textwidth}\centering
			\includegraphics[width=0.98\textwidth]{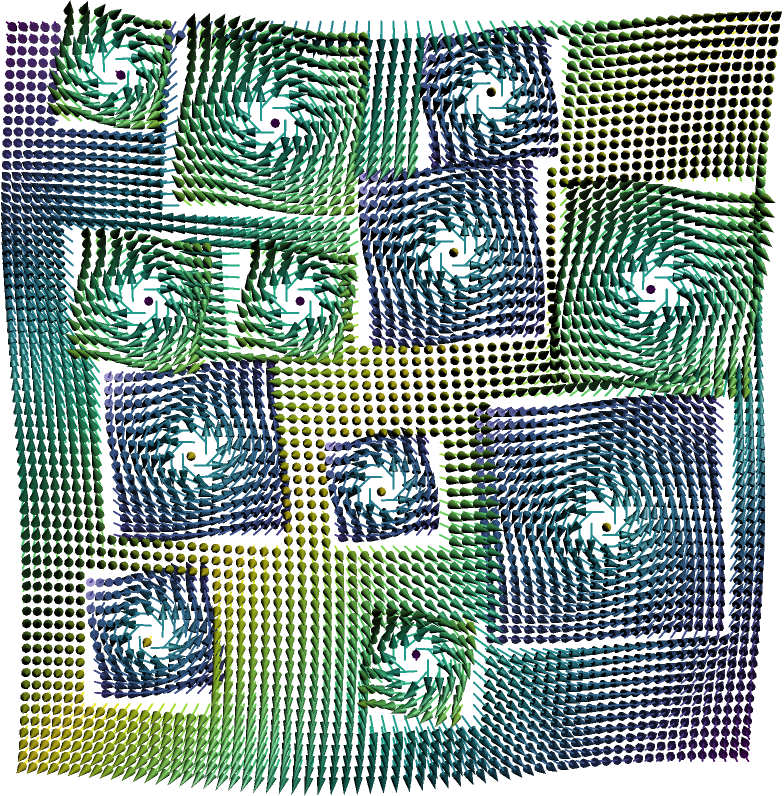}
			\caption{The original \\ \lstinline!S2Whirl! data.}
			\label{subfig:S2Whirl:orig}
		\end{subfigure}
		\begin{subfigure}{.32\textwidth}\centering
			\includegraphics[width=0.98\textwidth]{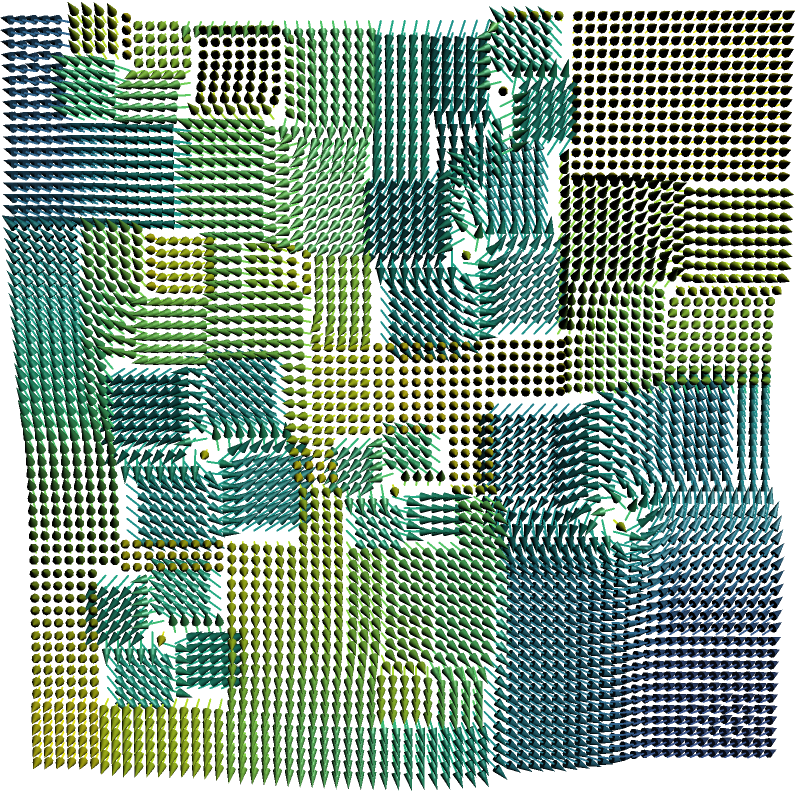}
			\caption{The result with \\ base $m$ mean.}
			\label{subfig:S2Whirl:mean}
		\end{subfigure}
		\begin{subfigure}{.32\textwidth}\centering
			\includegraphics[width=0.98\textwidth]{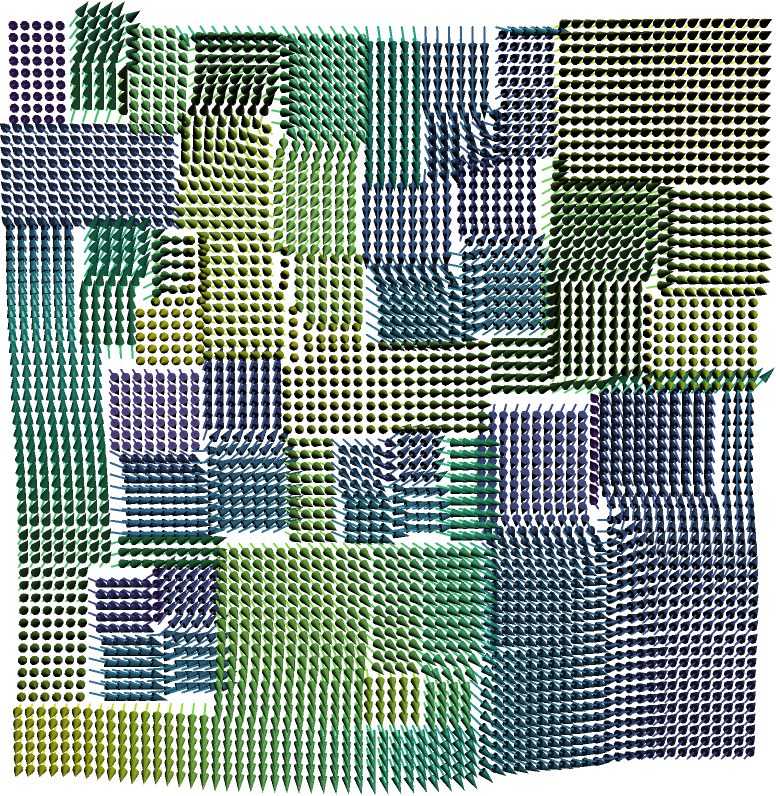}
			\caption{The result with \\ base $m$ west.}
			\label{subfig:S2Whirl:west}
		\end{subfigure}
		\newline
		\begin{subfigure}{\textwidth}
			\centering
			\pgfplotstableread[col sep = comma]{Data/S2/S2Whirl-Result.csv}\swhirlData
			\begin{tikzpicture}
				\begin{axis}[
					legend style={font=\scriptsize},
					log ticks with fixed point,
					width=.9\textwidth,
					height=.3\textwidth,
					xlabel={Iterations},
					xmin=1, xmax=300,
					]
					\addplot[TolVibrantTeal, mark=none] table[x=i, y=mMean]{\swhirlData};
					\addplot[TolVibrantBlue, mark=none] table[x=i, y=mWest]{\swhirlData};
					\legend{{mean}, {west}}
				\end{axis}
			\end{tikzpicture}
			\caption{cost function}
			\label{subfig:S2Whirl:cost}
		\end{subfigure}
	}{}
	\caption{The \lstinline!S2Whirl! example illustrates that for manifolds with positive curvature, the algorithm still converges quite fast, but due to the nonconvexity of the distance, the effect of the linearization influences the result.}
	\label{fig:S2Whirl}
\end{figure}

\subsection{Dependence on the Point of Linearization}

We mentioned previously that \cref{alg:DualOverrelax-lRCP} depends on the base points $m$ and $n$ and it cannot, in general, be expected to converge to a saddle point of~\eqref{eq:Manifold_saddle-point_representation_for_PD_Chambolle-Pock} since it is based on the \emph{linearized} saddle-point problem~\eqref{eq:Linearized_manifold_saddle-point_reprsentation_for_PD_Chambolle-Pock}.
In this experiment we illustrate the dependence of the limit of the sequence of primal iterates on the base point~$m$.

As data~$f$ we use the \lstinline!S2Whirl! image designed by Johannes Persch in~\cite{LausNikolovaPerschSteidl:2017:1}, adapted to \manoptjl, see~\cref{subfig:S2Whirl:orig}.
We set~$\alpha=1.5$ in the manifold-valued anisotropic $\ell^2$-TV model, \ie, \eqref{eq:M-LqTV} with $q = 1$.
We ran \cref{alg:DualOverrelax-lRCP} with $\primalstep = \dualstep = 0.35$ and $\gamma = 0.2$ for $300$~iterations.
The initial iterate is $p^{(0)}=f$ and $\xi^{(0)}_n$ as the zero vector.

We compare two different base points~$m$.
The first base point is the constant image whose value is the mean of all data pixels.
The second base point is the constant image whose value is $p=(1,0,0)^{\mathrm{T}}$ (\eqq{west}).
The final iterates are shown in~\cref{subfig:S2Whirl:mean} and \cref{subfig:S2Whirl:west}, respectively.
The evolution of the cost function value during the iterations is given in~\cref{subfig:S2Whirl:cost}.
Both runs yield piecewise constant solutions, but since their linearizations of~$\Lambda$ are using different base points, they yield different linearized models.
The resulting values of the cost function~\eqref{eq:M-LqTV} differ, but both show a similar convergence behavior.

\section{Conclusions}
\label{sec:Conclusions}

This paper introduces a novel concept of Fenchel duality for manifolds.
We investigate properties of this novel duality concept and study corresponding primal-dual formulations of non-smooth optimization problems on manifolds.
This leads to a novel primal-dual algorithm on manifolds, which comes in two variants, termed the exact and linearized Riemannian Chambolle--Pock algorithm.
The convergence proof for the linearized version is given on arbitrary Hadamard manifolds under a suitable assumption.
It is an open question whether condition \eqref{eq:Sufficient_conditions} can be removed.
The convergence analysis accompanies an earlier proof of convergence for a comparable method, namely the Douglas--Rachford algorithm, where the proof is restricted to Hadamard manifolds of constant curvature.
Numerical results illustrate not only that the linearized Riemannian Chambolle--Pock algorithm performs as well as state-of-the-art methods on Hadamard manifolds, but it also performs similarly well on manifolds with positive sectional curvature.
Note that here it also has to deal with the absence of a global convexity concept of the functional.

A more thorough investigation as well as a convergence proof for the exact variant are topics for future research.
Another point of future research is an investigation of the choice of the base points~$m\in\cM$ and~$n\in\cN$ on the convergence, especially when the base points vary during the iterations.

Starting from the proper statement of the primal and dual problem for the linearization approach of~\cref{subsec:Linearized Riemannian Primal-Dual Chambolle-Pock},
further aspects are open to investigation, for instance, regularity conditions ensuring strong duality.
Well-known closedness-type conditions are then available, opening in this way a new line of rich research topics for optimization on manifolds.

Another point of potential future research is the measurement of the linearization error introduced by the model from~\cref{subsec:Linearized Riemannian Primal-Dual Chambolle-Pock}.
The analysis of the discrepancy term, as well as its behavior in the convergence of the linearized algorithm \cref{alg:DualOverrelax-lRCP}, are closely related to the choice of the base points during the iteration, and should be considered in future research.

Furthermore, our novel concept of duality permits a definition of infimal convolution and thus offers a direct possibility to introduce the total generalized variation.
In what way these novel priors correspond to existing ones, is another issue of ongoing research.
Furthermore, the investigation of both a convergence rate as well as properties on manifolds with non-negative curvature are also open.

\paragraph{Acknowledgement}

The authors would like to thank two anonymous reviewers for their insightful comments which helped to improve the manuscript significantly.
RB would like to thank Fjedor Gaede and Leon Bungert for fruitful discussions concerning the Chambolle--Pock algorithm in $\R^n$, especially concerning the choice of parameters as well as DT for hospitality in Münster and Erlangen.
The authors would further like to thank Tuomo Valkonen for discussions on Hadamard manifolds and a three-point inequality remark, as well as Nicolas Boumal, Sebastian Neumayer, Gabriele Steidl for discussions and suggestions on preliminary versions of this manuscript.
RB would like to acknowledge funding by the \href{https://gepris.dfg.de/gepris/projekt/288750882}{DFG project BE~5888/2}.
DT would like to acknowledge support within the \href{https://www.uni-muenster.de/NoMADS/}{EU grant No.~777826, the NoMADs project}.
RH and JVN would like to acknowledge the \href{https://spp1962.wias-berlin.de}{Priority Program SPP~1962} (\emph{Non-smooth and Complementarity-based Distributed Parameter Systems: Simulation and Hierarchical Optimization}), which supported this work through the \href{https://gepris.dfg.de/gepris/projekt/314150341}{DFG grant HE~6077/10--1}.
MSL is supported by a measure which is co-financed by tax revenue based on the budget approved by the members of the Saxon state parliament.
Financial support is gratefully acknowledged.

\section*{Conflict of interest}

The authors declare that they have no conflict of interest.

\printbibliography

\end{document}